\documentclass[12pt]{amsart}

\voffset=-1.4mm
\oddsidemargin=17pt \evensidemargin=17pt
\headheight=9pt     \topmargin=26pt
\textheight=576pt   \textwidth=440.8pt
\parskip=0pt plus 4pt

\usepackage[usenames,dvipsnames]{xcolor}

\usepackage{enumitem,kantlipsum}

\usepackage{mathtools}
\usepackage{xfrac}
\usepackage[utf8]{inputenc}
\usepackage{amssymb}
\usepackage{amsmath}
\usepackage{amsfonts}
\usepackage{amsthm}
\usepackage{tikz}
\usepackage{algorithmic}
\usepackage{asymptote}
\usepackage{url}
\newtheorem{theorem}{Theorem}[section]
\newtheorem{corollary}{Corollary}[theorem]
\newtheorem{lemma}[theorem]{Lemma}
\newtheorem{proposition}[theorem]{Proposition}

\newtheorem{definition}[theorem]{Definition}

\usepackage[OT2,T1]{fontenc}
\DeclareSymbolFont{cyrletters}{OT2}{wncyr}{m}{n}
\DeclareMathSymbol{\Sha}{\mathalpha}{cyrletters}{"58}

\theoremstyle{definition}

\newtheorem{example}[theorem]{Example}

\newtheorem{question}[theorem]{Question}

\newcommand{\Z}{\mathbb{Z}}
\newcommand{\Zbar}{\overline{\mathbb{Z}}}

\newcommand{\D}{\mathfrak{D}}
\newcommand{\OO}{\mathcal{O}}
\newcommand{\p}{\mathfrak{p}}
\newcommand{\q}{\mathfrak{q}}

\newcommand{\LL}{\mathcal{L}}
\newcommand{\Q}{\mathbb{Q}}

\newcommand{\Spec}{\text{Spec}}
\newcommand{\okgm}{\OO_K[
\gamma]}

\newcommand{\Nm}{\text{Nm}}
\newcommand{\Cl}{\text{Cl}}
\newcommand\restr[2]{{
  \left.\kern-\nulldelimiterspace 
  #1 
  \vphantom{\big|} 
  \right|_{#2} 
  }}

\newenvironment{customthm}[1]
  {\innercustomthm}
  {\endinnercustomthm}

\newenvironment{customprop}[1]
  {\innercustomprop}
  {\endinnercustomprop}

\begin{document}
\title{Primes in denominators of algebraic numbers}
\author{Deepesh Singhal, Yuxin Lin}

\begin{abstract}
Denote the set of algebraic numbers as $\overline{\Q}$ and the set of algebraic integers as $\overline{\Z}$.
For $\gamma\in\overline{\Q}$, consider its irreducible polynomial in $\Z[x]$, $F_{\gamma}(x)=a_nx^n+\dots+a_0$. Denote $e(\gamma)=\gcd(a_{n},a_{n-1},\dots,a_1)$. Drungilas, Dubickas and Jankauskas show in a recent paper that $\Z[\gamma]\cap \Q=\{\alpha\in\Q\mid \{p\mid v_p(\alpha)<0\}\subseteq \{p\mid p|e(\gamma)\}\}$. Given a number field $K$ and $\gamma\in\overline{\Q}$, we show that there is a subset $X(K,\gamma)\subseteq \Spec(\OO_K)$, for which $\OO_K[\gamma]\cap K=\{\alpha\in K\mid \{\p\mid v_{\p}(\alpha)<0\}\subseteq X(K,\gamma)\}$. We prove that $\OO_K[\gamma]\cap K$ is a principal ideal domain if and only if the primes in $X(K,\gamma)$ generate the class group of $\OO_K$.
We show that given $\gamma\in \overline{\Q}$, we can find a finite set $S\subseteq \overline{\Z}$, such that for every number field $K$, we have $X(K,\gamma)=\{\p\in\Spec(\OO_K)\mid \p\cap S\neq \emptyset\}$. We study how this set $S$ relates to the ring $\overline{\Z}[\gamma]$ and the ideal $\D_{\gamma}=\{a\in\overline{\Z}\mid a\gamma\in\overline{\Z}\}$ of $\overline{\Z}$. We also show that $\gamma_1,\gamma_2\in \overline{\Q}$ satisfy $\D_{\gamma_1}=\D_{\gamma_2}$ if and only if $X(K,\gamma_1)=X(K,\gamma_2)$ for all number fields $K$.
\end{abstract}

\maketitle

\section{Introduction}
Let $\overline{\Q}$ be the algebraic closure of $\Q$ and let $\overline{\Z}$ be the set of algebraic integers.
We start with the following question:
\begin{question}
For an algebraic number $\gamma\in\overline{\Q}$, how should we define its denominator?
\end{question}
Denote the minimal polynomial of $\gamma$ in $\Z[x]$ as
$$F_{\gamma}(x)=a_nx^n+a_{n-1}x^{n-1}+\dots+a_0.$$
So $F_{\gamma}(x)\in \Z[x]$ is an irreducible polynomial with $F_{\gamma}(\gamma)=0$, $\gcd(a_n,\dots,a_0)=1$ and $a_n>0$.
We denote $\deg(\gamma)=n$.
The leading coefficient is denoted by $c(\gamma)=a_n$. The smallest positive integer $d$ for which $d\gamma$ is an algebraic integer is denoted as $d(\gamma)$. The numbers $c(\gamma)$ and $d(\gamma)$ are two natural candidates for the denominator of $\gamma$ and $d(\gamma)$ is usually called as the denominator of $\gamma$.
The two are closely related and it is known that
$$d(\gamma)\mid c(\gamma) \mid d(\gamma)^n.$$
In their paper, Arno, Robinson and Wheeler \cite{d(gamma)} compute $d(\gamma)$ in terms of the coefficients of $F_\gamma(x)$ by showing that for every prime $p$
$$v_p(d(\gamma))=\max\left(0,\max_{0\leq j\leq n-1}\left\lceil\frac{v_p(a_n)-v_p(a_j)}{n-j}\right\rceil\right).$$
They also show that $d(\gamma)=c(\gamma)$ if and only if there is no prime $p$ such that $p^2\mid a_n$ and $p\mid a_{n-1}$. Finally, they show that the density of algebraic numbers $\gamma$ for which $c(\gamma)=d(\gamma)$ is $\frac{1}{\zeta(3)}$. Ayad, Bayad and Kihel in \cite{v_p(d)=k}, fix a number field $K$, a rational prime $p$, integer $k\geq 1$ and determine the sets $\{v_{p}(c(\gamma))\mid \gamma\in K, v_p(d(\gamma))=k\}$. They show that this set only depends on $[K:\Q]$, $k$ and how $p$ splits in $K$. There is another related quantity $i(\gamma)=\gcd_{m\in \Z}F_{\gamma}(m)$, that has been studied in several papers \cite{i gamma 1, i gamma 2, i gamma 3, i gamma 4, i gamma 5, i gamma 6}.

The third candidate for the denominator of $\gamma$ should be $$e(\gamma)=\gcd(a_1,a_2,\dots,a_{n}).$$
Note that the $\gcd$ does not include the constant term $a_0$. Drungilas, Dubickas and Jankauskas show the following in \cite{Z[gamma]cap Q}:
\begin{theorem}\cite[Theorem 5]{Z[gamma]cap Q}\label{Thm: Z[gamma]cap Q}
Given an algebraic number $\gamma\in\overline{\Q}$, we have
$$\Z[\gamma]\cap\Q=\left\{\alpha\in\Q\mid \text{for all primes p, if }v_p(\alpha)<0 ,\text{ then } p|e(\gamma)\right\}.$$
\end{theorem}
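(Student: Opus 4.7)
The plan is to establish the two inclusions separately; let $e = e(\gamma)$. For $\supseteq$: the right-hand side equals the localization $\Z[1/e] = \{\alpha \in \Q : v_p(\alpha) \geq 0 \text{ for all } p \nmid e\}$, so it suffices to show $1/e \in \Z[\gamma]$. Writing $a_i = e b_i$ with $b_i \in \Z$ for $i \geq 1$, the equation $F_\gamma(\gamma) = 0$ rearranges to $a_0 = -e\gamma \cdot q(\gamma)$ where $q(x) = b_n x^{n-1} + \cdots + b_1 \in \Z[x]$. Since $\gcd(a_0, \ldots, a_n) = 1$ forces $\gcd(e, a_0) = 1$, a B\'ezout relation $u e + v a_0 = 1$ yields $\frac{1}{e} = u - v\gamma q(\gamma) \in \Z[\gamma]$. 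Taking powers then gives $\Z[1/e] \subseteq \Z[\gamma]$.

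For $\subseteq$: let $\alpha = g(\gamma) \in \Z[\gamma] \cap \Q$ with $g \in \Z[x]$, and suppose $v_p(\alpha) < 0$ for some prime $p$; write $\alpha = a/b$ in lowest terms with $p \mid b$. Minimality of $F_\gamma$ in $\Q[x]$ gives a factorization $g(x) - \alpha = F_\gamma(x) h(x)$ in $\Q[x]$, and multiplying by $b$ produces $bg(x) - a = F_\gamma(x) \cdot (bh(x))$ with the left side in $\Z[x]$. Now invoke Gauss's lemma: since $F_\gamma$ is primitive (its coefficients have gcd $1$), the factor $H(x) := bh(x)$ must lie in $\Z[x]$. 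Reducing mod $p$ gives $-a \equiv F_\gamma(x) H(x) \pmod p$ in $\F_p[x]$; the left side is a nonzero constant (as $p \nmid a$), so $F_\gamma(x) \bmod p$ itself must be a nonzero constant, forcing $p \mid a_i$ for every $i = 1, \ldots, n$, and hence $p \mid e$.

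The key ingredient is the application of Gauss's lemma in the second direction, which promotes a $\Q[x]$-factorization to an integral one via the primitivity of $F_\gamma$; the rest is elementary manipulation organised around the defining relation $F_\gamma(\gamma) = 0$ together with the coprimality of $e$ and $a_0$. I anticipate no serious obstacle beyond making sure these two standard facts are invoked cleanly.
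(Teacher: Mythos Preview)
Your proof is correct. Note that the paper does not give its own direct proof of this statement --- it is quoted from \cite{Z[gamma]cap Q} --- but the paper does recover it as the $K=\Q$ case of Proposition~\ref{Determine OK[gamma]cap K}, which rests on Theorem~\ref{X cond equiv} (Lemmas~\ref{2 in 1}, \ref{1 in 3}, \ref{3 in 2}). Your $\supseteq$ direction is essentially the $K=\Q$, class-number-$1$ specialization of Lemma~\ref{2 in 1}: the same B\'ezout trick, without the need to raise to the $h$-th power. Your $\subseteq$ direction, however, is genuinely different and more elementary: you use primitivity of $F_\gamma$ and Gauss's lemma to force $bh(x)\in\Z[x]$, then reduce mod $p$, whereas the paper's route passes through $S_{K,3}$ --- valuations of $\gamma$ at primes of $\OO_{K(\gamma)}$ above $\p$, and ultimately valuations of the conjugates $\gamma_j$ over the completion (Lemma~\ref{3 in 2}). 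Your argument is slicker over $\Q$ and avoids any local machinery; the paper's approach is what is needed to make the statement go through over an arbitrary number ring $\OO_K$, where one no longer has a single primitive integral minimal polynomial to which Gauss's lemma applies cleanly.
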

\begin{corollary}\label{Cor: Z[gamma]cap Q =Z}
An algebraic number $\gamma\in\overline{\Q}$ satisfies $\Z[\gamma]\cap \Q=\Z$ if and only if $e(\gamma)=1$.
\end{corollary}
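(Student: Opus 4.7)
The corollary is an immediate consequence of Theorem~\ref{Thm: Z[gamma]cap Q}, so my plan is simply to unpack both directions of the biconditional using the characterization
\[
\Z[\gamma]\cap\Q=\{\alpha\in\Q\mid v_p(\alpha)<0\Rightarrow p\mid e(\gamma)\}.
\]

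For the forward direction I would argue by contrapositive: assume $e(\gamma)\neq 1$ and produce an element of $\Z[\gamma]\cap\Q$ that lies outside $\Z$. Pick any prime $p\mid e(\gamma)$. Then $\alpha=\tfrac{1}{p}\in\Q$ has $v_p(\alpha)=-1<0$ and $v_q(\alpha)\geq 0$ for all other primes $q$, so the condition of Theorem~\ref{Thm: Z[gamma]cap Q} is satisfied and $\tfrac{1}{p}\in\Z[\gamma]\cap\Q$, but $\tfrac{1}{p}\notin\Z$, giving $\Z[\gamma]\cap\Q\neq\Z$.

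For the reverse direction, assume $e(\gamma)=1$. Then no prime divides $e(\gamma)$, so the implication "$v_p(\alpha)<0\Rightarrow p\mid e(\gamma)$" has a vacuous conclusion, forcing its hypothesis to fail for every prime $p$; that is, $v_p(\alpha)\geq 0$ for all primes $p$. Hence every $\alpha\in\Z[\gamma]\cap\Q$ lies in $\Z$. Combined with the trivial inclusion $\Z\subseteq\Z[\gamma]\cap\Q$, this gives equality.

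There is really no main obstacle here — the corollary is a one-line specialization of the theorem, and the only thing to be careful about is making both implications explicit by reading the condition $\{p\mid v_p(\alpha)<0\}\subseteq\{p\mid p\mid e(\gamma)\}$ correctly in the edge case $e(\gamma)=1$, where the right-hand set is empty.
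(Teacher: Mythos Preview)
Your proposal is correct and matches the paper's approach: the paper states this corollary immediately after Theorem~\ref{Thm: Z[gamma]cap Q} with no separate proof, treating it as an immediate consequence, which is exactly what you have done by unpacking both directions of the set equality in the edge case where the set of primes dividing $e(\gamma)$ is empty.
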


Note that if $\gamma$ is an algebraic integer, then $e(\gamma)=1$. However, we can have $e(\gamma)=1$ even if $\gamma$ is not an algebraic integer. For example, take $\gamma=\frac{1}{2+i}$, so $F_{\gamma}(x)=5x^2-4x+1$. We have $c(\gamma)=d(\gamma)=5$ and $e(\gamma)=1$, it satisfies $\Z[\frac{1}{2+i}]\cap\Q=\Z$.

In Section \ref{Sec: classify tuples}, we classify which tuples of positive integers occur as $(c(\gamma),d(\gamma),e(\gamma),\deg(\gamma))$.
\begin{theorem}\label{classify c,d,e,n}
Suppose we are given positive integers $c,d,e,n$ with $n\geq 2$. Then there is a algebraic number $\gamma$ for which $c(\gamma)=c$, $d(\gamma)=d$, $e(\gamma)=e$ and $\deg(\gamma)=n$ if and only if $d|c$, $c|d^n$, $e|c$, $c|d^{n-1}e$ and each rational prime $p$ satisfies at least one of the following:
\begin{enumerate}
    \item $v_{p}(d(\gamma))+v_p(e(\gamma))\leq v_p(c(\gamma))$.
    \item $v_p(c(\gamma))= nv_p(d(\gamma))$.
\end{enumerate}
\end{theorem}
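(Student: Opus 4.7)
The plan is to work prime by prime throughout, leveraging the Arno--Robinson--Wheeler formula stated in the excerpt.

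For the necessity direction, the divisibilities $d\mid c$ and $c\mid d^n$ are already recorded above, and $e\mid c$ is immediate from $e(\gamma)=\gcd(a_1,\dots,a_n)\mid a_n$. For $c\mid d^{n-1}e$, I would fix a prime $p$ and let $j^{*}\in\{1,\dots,n\}$ realize $v_p(e)=\min_{j\ge 1}v_p(a_j)$; if $j^{*}=n$ the inequality is trivial, and otherwise substituting $j=j^{*}$ into the Arno--Robinson--Wheeler formula yields $(n-j^{*})v_p(d)\ge v_p(c)-v_p(e)$, hence $v_p(c)\le (n-1)v_p(d)+v_p(e)$. For the per-prime dichotomy, the case $v_p(d)=0$ reduces (1) to $v_p(e)\le v_p(c)$, which follows from $e\mid c$. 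When $v_p(d)\ge 1$, suppose (1) fails, i.e., $v_p(e)>v_p(c)-v_p(d)$. Then $v_p(a_j)\ge v_p(e)$ for every $j\ge 1$ forces the corresponding ceilings in the formula to be at most $v_p(d)-1$, so the Arno--Robinson--Wheeler maximum must be attained at $j=0$ with $v_p(a_0)=0$ (which is forced whenever $v_p(e)>0$, since $\min_j v_p(a_j)=0$); chasing the tight range of the ceiling then extracts $v_p(c)=nv_p(d)$, i.e., condition (2).

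For the sufficient direction, I plan to construct $F(x)=cx^n+a_{n-1}x^{n-1}+\cdots+a_1x+q$, where $q$ is any prime with $q\nmid cde$. For each prime $p\mid cde$ I prescribe $v_p(a_i)$ for $1\le i\le n-1$ by a split into two sub-cases. If (1) holds at $p$, take $v_p(a_i)=\max\bigl(v_p(c)-(n-i)v_p(d),\,v_p(e)\bigr)$; the hypothesis $c\mid d^{n-1}e$ forces $v_p(a_1)=v_p(e)$, and the Arno--Robinson--Wheeler formula is tight at $j=n-1$. If only (2) holds at $p$, then $v_p(c)=nv_p(d)$ combined with $c\mid d^{n-1}e$ forces $v_p(e)\ge (n-1)v_p(d)+1$, and one may take $v_p(a_i)=v_p(e)$ uniformly for $1\le i\le n-1$ with $v_p(a_0)=0$, so the formula is tight at $j=0$ and all other ceilings are bounded by $v_p(d)-1$. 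In both sub-cases one checks $\min_j v_p(a_j)=0$ and $\min_{j\ge 1}v_p(a_j)=v_p(e)$ by direct inspection of the prescribed valuations.

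To realize these local specifications globally, I would apply the Chinese remainder theorem to produce integers $a_1,\dots,a_{n-1}$ with the prescribed $p$-adic valuations at every $p\mid cde$ together with $a_i\equiv 0\pmod q$. Setting $a_0=q$ and $a_n=c$ then makes $F$ Eisenstein at $q$ (hence irreducible, so $\deg(\gamma)=n$), while $\gcd(a_0,\dots,a_n)=1$ because $q\nmid c$; at primes $p\nmid c$ all valuations of interest vanish automatically. The main obstacle I expect is in the necessity argument: when (1) fails one must confirm that the Arno--Robinson--Wheeler maximum truly funnels through the index $j=0$ and that the resulting ceiling equality $\lceil v_p(c)/n\rceil=v_p(d)$ tightens to the strict equality $v_p(c)=nv_p(d)$ of condition (2); this tightening is where the combinatorial bookkeeping with $\lceil\cdot\rceil$ is most delicate and drives the shape of the theorem.
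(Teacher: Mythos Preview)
Your plan is essentially the paper's own proof: both directions work prime by prime via the Arno--Robinson--Wheeler formula, and your construction for sufficiency (the max formula in case (1) and the constant $v_p(e)$ in case (2)) is exactly what the paper writes, since when (2) holds but (1) fails the max collapses to $v_p(e)$ anyway. One small slip: in your sub-case ``only (2) holds'', the bound $v_p(e)\ge(n-1)v_p(d)+1$ comes from the failure of (1), not from $c\mid d^{n-1}e$.

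The concern you flag at the end is not merely delicate---it is an actual gap, and it is the same gap the paper has. From ``(1) fails'' you correctly deduce that the ARW maximum is attained at $j=0$, giving $v_p(d)=\lceil v_p(c)/n\rceil$; but this only yields $n(v_p(d)-1)<v_p(c)\le nv_p(d)$, and there is no mechanism to tighten it to the equality $v_p(c)=nv_p(d)$ of condition (2). Concretely, take $F(x)=4x^3+4x^2+4x+1$, which is irreducible (degree $3$, no rational root). Here $c=4$, $e=\gcd(4,4,4)=4$, and since $2\gamma$ satisfies $x^3+2x^2+4x+2=0$ while $\gamma\notin\overline{\Z}$, we get $d=2$. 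At $p=2$ one has $v_2(d)+v_2(e)=3>2=v_2(c)$ and $nv_2(d)=3\neq 2=v_2(c)$, so neither (1) nor (2) holds, yet the tuple $(c,d,e,n)=(4,2,4,3)$ is realized. Thus the necessity direction fails as stated; the honest conclusion in the ``(1) fails'' branch is only $v_p(d)=\lceil v_p(c)/n\rceil$, and condition (2) in the theorem should be weakened accordingly (your sufficiency construction already works verbatim under this weaker hypothesis).
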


For a number field $K$, we denote the ring of algebraic integers in $K$ as $\OO_K$ and the set of non-zero prime ideals of $\OO_K$ as $\Spec(\OO_K)$.
The ring $\Z[\gamma]\cap \Q$ is controlled by the the prime divisors of $e(\gamma)$. We will show that the ring $\OO_K[\gamma]\cap K$ is controlled by certain prime ideals of $\OO_K$ in a similar manner.
Denote the monic irreducible polynomial of $\gamma$ over $K$ as $f_{K,\gamma}(x)=b_nx^n+\dots+b_0\in K[x]$. So, $f_{K,\gamma}(x)$ is irreducible in $K[x]$, $f_{K,\gamma}(\gamma)=0$ and $b_n=1$.

\begin{theorem}\label{X cond equiv}
Given a number ring $\OO_K$, an algebraic number $\gamma$ and a prime $\p\in\Spec(\OO_K)$, the following are equivalent:
\begin{itemize}
    \item For every $i\in [1,n]$: $v_{\p}(b_i)>v_{\p}(b_0)$.
    \item There exists an $ \alpha\in \p$ for which $\frac{1}{\alpha}\in\OO_K[\gamma]$.
    \item For every $\q\in\Spec(\OO_{K(\gamma)})$: if $\q\cap \OO_K=\p$ then $v_{\q}(\gamma)<0$.
\end{itemize}
\end{theorem}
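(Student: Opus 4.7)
My plan is the cycle (i) $\Rightarrow$ (iii) $\Rightarrow$ (i), (i) $\Rightarrow$ (ii), (ii) $\Rightarrow$ (iii), with the real work lying in (i) $\Rightarrow$ (ii). For (i) $\Rightarrow$ (iii): pick any $\q$ above $\p$ and apply the ultrametric inequality to the defining relation $b_0 = -\sum_{i=1}^n b_i \gamma^i$. Supposing $v_\q(\gamma) \geq 0$, condition (i) gives $v_\q(b_i \gamma^i) \geq v_\q(b_i) > v_\q(b_0)$ for every $i \geq 1$ (using that $v_\q|_K$ is a positive integer multiple of $v_\p$), contradicting the summation. For (iii) $\Rightarrow$ (i): fix an extension $\tilde v$ of $v_\p$ to $\overline K$; via the standard correspondence between extensions of $v_\p$ and Galois translates of $\gamma$, condition (iii) translates to $\tilde v(\gamma_i) < 0$ for every Galois conjugate $\gamma_i$ of $\gamma$. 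Using Vieta, $b_{n-k} = (-1)^k e_k(\gamma_1, \ldots, \gamma_n)$, and the ultrametric estimate $\tilde v(e_k) \geq \sum_{i=1}^k \tilde v(\gamma_{(i)})$ (roots reindexed by non-decreasing valuation), one obtains $\tilde v(e_k) > \tilde v(e_n) = \tilde v(b_0)$ for every $k < n$, since the $n-k$ omitted roots have strictly negative valuation; restricting to $K$ yields (i).

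For (ii) $\Rightarrow$ (iii): if $1/\alpha = g(\gamma)$ with $g(x) = \sum_j g_j x^j \in \OO_K[x]$ and $\alpha \in \p$, then for any $\q$ above $\p$ we have $v_\q(1/\alpha) = -v_\q(\alpha) \leq -1$. Assuming $v_\q(\gamma) \geq 0$ forces $v_\q(g_j\gamma^j) \geq 0$ for each $j$ and hence $v_\q(g(\gamma)) \geq 0$ by the ultrametric inequality, a contradiction.

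The main implication (i) $\Rightarrow$ (ii) has an explicit construction step followed by a structural step. I would choose $c \in \OO_K$ satisfying $cb_i \in \OO_K$ for every $i \in [1,n]$ and $v_\p(c) = \max(0, -\min_{i \geq 1} v_\p(b_i))$, attainable by picking an appropriate element of the corresponding integral ideal of $\OO_K$. Then $-cb_0 = \sum_{i=1}^n (cb_i)\gamma^i$ manifestly lies in $R := \OO_K[\gamma] \cap K$, and (i) forces $v_\p(c) < -v_\p(b_0)$, so $v_\p(-cb_0) < 0$. Thus $R$ contains an element with a pole at $\p$. To extract (ii), I would invoke the classical structure theorem for rings lying between a Dedekind domain and its field of fractions: every such $R$ is of the form $\OO_K[T^{-1}]$ for some multiplicative subset $T \subseteq \OO_K$, and $\p \cap T \neq \emptyset$ precisely when $R$ contains an element with $v_\p < 0$. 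Any $\alpha \in \p \cap T$ then satisfies $1/\alpha \in R \subseteq \OO_K[\gamma]$, which is (ii).

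The main obstacle, as I see it, is this final extraction. The constructed element $-cb_0$ can have positive valuations at primes $\p' \neq \p$, so $1/(-cb_0)$ need not lie in $\OO_K$, and I see no direct modification by an $\OO_K$-multiplier that clears those extra poles while preserving membership in $\OO_K[\gamma]$. The overring structure theorem resolves this cleanly; a more hands-on alternative would be to take $\alpha$ generating $\p^m$ where $m$ is the order of $[\p]$ in $\mathrm{Cl}(\OO_K)$, and then verify $1/\alpha \in \OO_K[\gamma]$ directly by an additional computation from $-cb_0$.
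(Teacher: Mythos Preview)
Your proof is correct. The valuation arguments for (ii) $\Rightarrow$ (iii) and (iii) $\Rightarrow$ (i) match the paper's lemmas, and your additional direct implication (i) $\Rightarrow$ (iii) via the ultrametric inequality applied to $b_0 = -\sum_{i\ge 1} b_i\gamma^i$ is a clean shortcut that the paper does not include (the paper runs the single cycle (i) $\Rightarrow$ (ii) $\Rightarrow$ (iii) $\Rightarrow$ (i)).

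The genuine divergence is in (i) $\Rightarrow$ (ii). You produce an element $-cb_0 \in \OO_K[\gamma]\cap K$ with a pole at $\p$ and then invoke the structure theorem that every overring of a Dedekind domain with torsion class group is a localization $T^{-1}\OO_K$; this is valid for number rings (finite class group) and immediately supplies the required $\alpha \in \p \cap T$. The paper instead keeps everything explicit and self-contained: after clearing denominators to obtain $c_0 = -\sum_{i\ge 1} c_i\gamma^i$ with $v_\p(c_i) > v_\p(c_0) = e$, it writes $(c_0) = \p^e J$ with $J+\p = (1)$, raises to the class number $h$ so that $\p^h = (\beta)$ and $J^h = (\alpha)$ are principal with $c_0^h = \beta^e\alpha$, and expands $c_0^h$ as an $\OO_K$-polynomial in $\gamma$. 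Each monomial $c_{j_1}\cdots c_{j_h}\gamma^{\sum j_i}$ has $\p$-valuation at least $h(e+1) = v_\p(\beta^{e+1})$, so $\alpha/\beta \in \OO_K[\gamma]$, and a B\'ezout identity from $(\alpha)+(\beta) = (1)$ then gives $1/\beta \in \OO_K[\gamma]$ with $\beta \in \p$.

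Your route is shorter and conceptually transparent, at the cost of importing the overring/QR-domain theorem as a black box. The paper's route is entirely elementary and in fact produces the witness explicitly as a generator of $\p^h$ --- precisely the ``hands-on alternative'' you sketch at the end but do not carry out.
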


\begin{theorem}\label{Y cond equiv}
Given a number ring $\OO_K$, an algebraic number $\gamma$ and a prime $\p\in\Spec(\OO_K)$, the following are equivalent:
\begin{itemize}
    \item There exists $i\in [0,n-1]$ for which $v_{\p}(b_i)<0$.
    \item There exists $\q\in\Spec(\OO_{K(\gamma)})$ for which $\q\cap \OO_K=\p$ and $v_{\q}(\gamma)<0$.
\end{itemize}
\end{theorem}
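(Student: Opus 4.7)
The plan is to prove each direction by contrapositive, in both cases exploiting the fact that $f_{K,\gamma}$ is monic ($b_n=1$), so that integrality of $\gamma$ locally at $\p$ is governed by integrality of the coefficients $b_0,\dots,b_{n-1}$.

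First I would handle ``second $\Rightarrow$ first''. Assuming $v_\p(b_i)\geq 0$ for every $i\in[0,n-1]$, the monic polynomial $f_{K,\gamma}$ lies in $\OO_{K,\p}[x]$, where $\OO_{K,\p}$ denotes the DVR obtained by localizing $\OO_K$ at $\p$. Hence $\gamma$ is integral over $\OO_{K,\p}$. For any $\q\in\Spec(\OO_{K(\gamma)})$ with $\q\cap\OO_K=\p$, the local ring $\OO_{K(\gamma),\q}$ is a DVR containing $\OO_{K,\p}$ and is integrally closed in $K(\gamma)$; it therefore contains $\gamma$, giving $v_\q(\gamma)\geq 0$ and contradicting the existence of a $\q\mid\p$ with $v_\q(\gamma)<0$.

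Next I would handle ``first $\Rightarrow$ second''. Assume $v_\q(\gamma)\geq 0$ for every $\q\mid\p$ in $K(\gamma)$. Using the standard decomposition $K(\gamma)\otimes_K K_\p\cong\prod_{\q\mid\p}(K(\gamma))_\q$, I would factor
\[f_{K,\gamma}(x)=\prod_{\q\mid\p}g_\q(x)\]
in $K_\p[x]$, where each monic irreducible factor $g_\q$ is the minimal polynomial of $\gamma$ over $K_\p$ inside the completion $(K(\gamma))_\q$. The hypothesis says $\gamma$ is integral over $\OO_{K_\p}$ in every $(K(\gamma))_\q$; since $\OO_{K_\p}$ is integrally closed, each $g_\q$ lies in $\OO_{K_\p}[x]$. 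Multiplying the $g_\q$ gives $f_{K,\gamma}\in\OO_{K_\p}[x]\cap K[x]=\OO_{K,\p}[x]$, so $v_\p(b_i)\geq 0$ for every $i$.

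The main potentially delicate point is the factorization of $f_{K,\gamma}$ over $K_\p$ together with the identification of each factor as the minimal polynomial of $\gamma$ in the corresponding completion; this is the CRT-style decomposition $K(\gamma)\otimes_K K_\p\cong\prod_\q(K(\gamma))_\q$ applied to the primitive element $\gamma$. Everything else — the integral closedness of DVRs, the stability of integrality under multiplication, and $K\cap\OO_{K_\p}=\OO_{K,\p}$ — is routine.
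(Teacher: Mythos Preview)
Your proof is correct but takes a genuinely different route from the paper's. The paper works entirely with the roots $\gamma_1,\dots,\gamma_n$ of $f_{K,\gamma}$ in an algebraic closure of the completion $K_\p$, using the known equality of multisets $\{e_i v_{\q_i}(\gamma)\}=\{\overline v(\gamma_j)\}$ and direct valuation estimates on the elementary symmetric functions: if every $\overline v(\gamma_j)\geq 0$ then every $v_\p(b_i)\geq 0$, while if some $\overline v(\gamma_j)<0$ then the $t$-fold symmetric sum corresponding to the negative-valuation roots is the \emph{unique} minimum, forcing $v_\p(b_{n-t})<0$. Your argument instead packages both directions as statements about integrality over the local ring: one direction is immediate from $f_{K,\gamma}\in\OO_{K,\p}[x]$ monic, and the other goes through the decomposition $K(\gamma)\otimes_K K_\p\cong\prod_\q (K(\gamma))_\q$, factoring $f_{K,\gamma}$ into the local minimal polynomials $g_\q$ and using that $\OO_{K_\p}$ is integrally closed to force each $g_\q\in\OO_{K_\p}[x]$. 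The paper's approach is more explicit and even pinpoints an index $i$ with $v_\p(b_i)<0$; yours is cleaner and avoids the ``unique minimum'' combinatorics, at the cost of invoking the completion/tensor decomposition as a black box.
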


The collection of primes in $\Spec(\OO_K)$ that satisfy the conditions of Theorem~\ref{X cond equiv} is denoted by $X(X,\gamma)$. The collection of primes that satisfy the conditions of Theorem~\ref{Y cond equiv} is denoted by $Y(K,\gamma)$. Clearly we have $X(K,\gamma)\subseteq Y(K,\gamma)$.
Note that
$$X(\Q,\gamma)=\{p\in\Spec(\Z)\mid p|e(\gamma)\},$$
and
$$Y(\Q,\gamma)=\{p\in\Spec(\Z)\mid p|c(\gamma)\}=\{p\in\Spec(\Z)\mid p|d(\gamma)\}.$$
We generalize Theorem \ref{Thm: Z[gamma]cap Q} and Corollary \ref{Cor: Z[gamma]cap Q =Z} to an arbitrary number ring $\OO_K$. The set $X(K,\gamma)$ acts as the analog of prime divisors of $e(\gamma)$. 
\begin{theorem}\label{X=empty result}
Given a number ring $\OO_K$ and an algebraic number $\gamma$, the following hold:
\begin{enumerate}
    \item $X(K,\gamma)=\emptyset$ if and only if $\OO_K[\gamma]\cap K=\OO_K$.
    \item $Y(K,\gamma)=\emptyset$ if and only if $\gamma\in\overline{\Z}$.
\end{enumerate}
\end{theorem}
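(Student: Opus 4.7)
Both parts come down to unpacking the equivalent characterizations in Theorems~\ref{X cond equiv} and~\ref{Y cond equiv}, combined with the standard fact that an element of a number field $L$ lies in $\OO_L$ iff all its $\q$-adic valuations (for $\q\in\Spec(\OO_L)$) are non-negative.

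Part (2) is the most direct. By Theorem~\ref{Y cond equiv}, the condition $Y(K,\gamma)=\emptyset$ says that no $\p\in\Spec(\OO_K)$ admits a $\q\in\Spec(\OO_{K(\gamma)})$ lying over it with $v_\q(\gamma)<0$. Since every prime of $\OO_{K(\gamma)}$ lies over some $\p\in\Spec(\OO_K)$, this is equivalent to requiring $v_\q(\gamma)\geq 0$ for every $\q\in\Spec(\OO_{K(\gamma)})$, and in turn to $\gamma\in\OO_{K(\gamma)}\subseteq\overline{\Z}$.

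For the $(\Leftarrow)$ direction of part (1), I would argue by contrapositive. If $X(K,\gamma)\neq\emptyset$, pick $\p\in X(K,\gamma)$ and use the second bullet of Theorem~\ref{X cond equiv} to obtain $\alpha\in\p\subseteq\OO_K$ with $\frac{1}{\alpha}\in\OO_K[\gamma]$. Then $\frac{1}{\alpha}\in\OO_K[\gamma]\cap K$, but $v_\p(\frac{1}{\alpha})=-v_\p(\alpha)\leq -1<0$, so $\frac{1}{\alpha}\notin\OO_K$; hence $\OO_K[\gamma]\cap K$ strictly contains $\OO_K$.

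The main content lies in the $(\Rightarrow)$ direction of part (1), where the third bullet of Theorem~\ref{X cond equiv} earns its keep. Assume $X(K,\gamma)=\emptyset$, take $\beta\in\OO_K[\gamma]\cap K$, and write $\beta=p(\gamma)$ for some $p(x)\in\OO_K[x]$. Given any $\p\in\Spec(\OO_K)$, negating the third bullet yields some $\q\in\Spec(\OO_{K(\gamma)})$ over $\p$ with $v_\q(\gamma)\geq 0$; since the coefficients of $p$ lie in $\OO_K$ and therefore have non-negative $\q$-valuation, I get $v_\q(\beta)=v_\q(p(\gamma))\geq 0$. Because $\beta\in K$, the relation $v_\q(\beta)=e(\q\mid\p)\,v_\p(\beta)$ forces $v_\p(\beta)\geq 0$, and since $\p$ was arbitrary, $\beta\in\OO_K$. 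The one point worth being careful about is that negating the third bullet produces an \emph{existential} statement about $\q$ (some $\q$ over $\p$ satisfies $v_\q(\gamma)\geq 0$), which is exactly the form the argument needs.
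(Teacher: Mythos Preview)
Your proof is correct. The backward direction of part~(1) matches the paper exactly, but the other pieces take a different route.

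For the forward direction of part~(1), the paper invokes Proposition~\ref{Determine OK[gamma]cap K} (whose proof uses a class-number argument to produce principal powers of primes), whereas you argue directly via the third bullet of Theorem~\ref{X cond equiv}: negate that bullet to find a $\q$ over $\p$ with $v_\q(\gamma)\ge 0$, evaluate the polynomial expression for $\beta$ at $\q$, and push the non-negativity down to $\p$. Your approach is more self-contained and avoids the detour through Proposition~\ref{Determine OK[gamma]cap K}; the paper's approach has the advantage that once Proposition~\ref{Determine OK[gamma]cap K} is in hand, part~(1) becomes a one-line corollary. For part~(2), the paper uses the first (coefficient) characterization in Theorem~\ref{Y cond equiv}, reducing to the standard fact that $\gamma$ is integral over $\OO_K$ iff $f_{K,\gamma}\in\OO_K[x]$; you instead use the valuation characterization, reducing to the equally standard fact that $\gamma\in\OO_{K(\gamma)}$ iff all $v_\q(\gamma)\ge 0$. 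Both are perfectly valid.
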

\begin{proposition}\label{Determine OK[gamma]cap K}
For $\alpha\in K$, we have $\alpha\in \OO_K[\gamma]$ if and only if
$$\{\p\in\Spec(\OO_K)\mid v_{\p}(\alpha)<0\}\subseteq X(K,\gamma).$$
\end{proposition}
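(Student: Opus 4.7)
The plan is to prove both inclusions separately using the equivalent conditions for $\p \in X(K,\gamma)$ provided by Theorem~\ref{X cond equiv}. Fix $\alpha \in K$ and set $S_\alpha = \{\p \in \Spec(\OO_K) \mid v_\p(\alpha) < 0\}$, which is finite.

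For the ``only if'' direction I will argue by contrapositive: suppose $\alpha \in \OO_K[\gamma]$ and some $\p \in S_\alpha$ fails to lie in $X(K,\gamma)$. By the third bullet of Theorem~\ref{X cond equiv}, there is $\q \in \Spec(\OO_{K(\gamma)})$ with $\q \cap \OO_K = \p$ and $v_\q(\gamma) \geq 0$. Writing $\alpha = \sum_i c_i \gamma^i$ with $c_i \in \OO_K$ and using $v_\q(c_i) \geq 0$ together with the ultrametric inequality, one gets $v_\q(\alpha) \geq 0$. The restriction $v_\q|_K = e(\q|\p)v_\p$ then forces $v_\p(\alpha) \geq 0$, contradicting the choice of $\p$.

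For the ``if'' direction I will use the second bullet of Theorem~\ref{X cond equiv}. For each $\p \in S_\alpha \subseteq X(K,\gamma)$, choose $\beta_\p \in \p$ with $1/\beta_\p \in \OO_K[\gamma]$, and set $m_\p = -v_\p(\alpha) > 0$. The crux is to form the single element $a = \prod_{\p \in S_\alpha} \beta_\p^{m_\p} \in \OO_K$: then $v_\p(a) \geq m_\p$ at each $\p \in S_\alpha$, so $a\alpha$ has nonnegative valuation at every prime and hence lies in $\OO_K$. Simultaneously $1/a = \prod_\p (1/\beta_\p)^{m_\p}$ is a finite product of elements of $\OO_K[\gamma]$, so it still lies in that ring. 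Combining, $\alpha = (a\alpha)\cdot(1/a) \in \OO_K[\gamma]$. Given Theorem~\ref{X cond equiv} in hand, no real obstacle remains: the only mild step is patching the local witnesses $\beta_\p$ at the finitely many primes of $S_\alpha$ into a single global element $a$, which the product above immediately achieves.
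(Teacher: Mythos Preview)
Your proof is correct. The ``if'' direction matches the paper's approach almost exactly: both pick, for each bad prime $\p$, a witness $\beta_\p\in\p$ with $1/\beta_\p\in\OO_K[\gamma]$ (second bullet of Theorem~\ref{X cond equiv}) and then multiply $\alpha$ by a suitable product of powers of the $\beta_\p$ to clear denominators. The only cosmetic difference is that the paper uses a single uniform exponent $N$ while you use the tailored exponents $m_\p=-v_\p(\alpha)$.

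Your ``only if'' direction, however, takes a genuinely different route. The paper works with the $S_{K,1}$ description: writing $\alpha=a/b$ with $a,b\in\OO_K$, it uses a class-number argument to extract from $\frac{a}{b}\in\OO_K[\gamma]$ an explicit element $\beta^r\in\p$ with $\frac{1}{\beta^r}\in\OO_K[\gamma]$, thereby exhibiting $\p\in X(K,\gamma)$ directly. You instead invoke the $S_{K,3}$ description: if $\p\notin X(K,\gamma)$ then some $\q$ above $\p$ has $v_\q(\gamma)\ge 0$, and the ultrametric inequality applied to $\alpha=\sum c_i\gamma^i$ immediately gives $v_\q(\alpha)\ge 0$, hence $v_\p(\alpha)\ge 0$. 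This is shorter and avoids any class-group manipulation; it is essentially the same valuation idea the paper uses in Lemma~\ref{1 in 3}, applied here to an arbitrary $\alpha\in\OO_K[\gamma]\cap K$ rather than to a reciprocal. The paper's version, on the other hand, is more constructive: it actually produces the element of $\p$ whose inverse lies in $\OO_K[\gamma]$.
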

Moreover, the primes in $X(K,\gamma)$ also control the the class group of $\OO_K[\gamma]\cap K$.
\begin{proposition}\label{Class group OK[gamma]cap K}
If $K$ is a number field and $\gamma\in\overline{\Q}$, then $\OO_K[\gamma]\cap K$ is a Dedekind domain and its class group is
$$\Cl(\OO_{K}[\gamma]\cap K)\cong \Cl(\OO_K)/\langle[\p]\mid \p\in X(K,\gamma)\rangle.$$
\end{proposition}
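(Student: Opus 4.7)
The plan is to identify $R:=\OO_K[\gamma]\cap K$ as a localization of $\OO_K$ at a suitable multiplicative set, and then invoke the standard computation of the class group of a localization of a Dedekind domain.

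First, setting $S:=X(K,\gamma)$, Proposition~\ref{Determine OK[gamma]cap K} gives the valuation-theoretic description
$$R=\{\alpha\in K\mid v_{\p}(\alpha)\geq 0\text{ for all }\p\in\Spec(\OO_K)\setminus S\}=\bigcap_{\p\notin S}(\OO_K)_{\p}.$$
Let $T=\{r\in\OO_K\setminus\{0\}\mid v_{\p}(r)=0\text{ for all }\p\notin S\}$, a multiplicative subset of $\OO_K$. I would then show $R=T^{-1}\OO_K$. The inclusion $T^{-1}\OO_K\subseteq R$ is immediate from the definitions. For the reverse inclusion, given $\alpha\in R$, list the finitely many primes $\p_1,\ldots,\p_k\in S$ at which $v_{\p_i}(\alpha)<0$. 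Using finiteness of $\Cl(\OO_K)$, pick $h$ with $\p_i^{h}=(t_i)$ principal for each $i$; then $t_i\in T$, and for suitable exponents $n_i$ the element $r:=t_1^{n_1}\cdots t_k^{n_k}\in T$ satisfies $r\alpha\in\OO_K$, so $\alpha\in T^{-1}\OO_K$.

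Since $T^{-1}\OO_K$ is a localization of a Dedekind domain, it is itself Dedekind, which gives the first assertion. Its non-zero primes correspond bijectively to primes of $\OO_K$ disjoint from $T$, i.e.\ to $\Spec(\OO_K)\setminus S$. For the class group, I would work with the extension-of-ideals map $\iota:[\mathfrak{a}]\mapsto[\mathfrak{a}R]$ on class groups. It is surjective because every non-zero ideal of $R$ is of the form $\mathfrak{a}R$ for an ideal $\mathfrak{a}$ of $\OO_K$ (namely its contraction). Moreover, for each $\p\in S$ the generator of a principal power $\p^{h}=(t)$ lies in $\p\cap T$, hence is a unit of $R$, so $\p R=R$ and $[\p]$ is in the kernel of $\iota$. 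Conversely, if $\mathfrak{a}R=\alpha R$ with $\alpha\in K^{\times}$, then comparing $\p$-adic valuations for each $\p\notin S$ shows that the fractional ideal $\alpha^{-1}\mathfrak{a}$ is supported only on $S$; consequently $[\mathfrak{a}]\in\langle [\p]\mid \p\in S\rangle$ in $\Cl(\OO_K)$. This produces the claimed isomorphism
$$\Cl(R)\cong\Cl(\OO_K)/\langle[\p]\mid\p\in X(K,\gamma)\rangle.$$

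The main obstacle is the identification $R=T^{-1}\OO_K$: getting from an abstract intersection of local rings to a genuine localization requires that enough elements of $T$ exist to clear the denominators of arbitrary $\alpha\in R$, which is precisely where finiteness of the class group enters. Once this is in hand, verifying surjectivity and computing the kernel of $\iota$ is the standard localization sequence and amounts to careful bookkeeping with valuations.
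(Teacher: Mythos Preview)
Your proof is correct and follows the same overall structure as the paper: both arguments first invoke Proposition~\ref{Determine OK[gamma]cap K} to identify $\OO_K[\gamma]\cap K$ with the ring of $S$-integers $\OO_{K,X(K,\gamma)}$, and then compute the class group of that ring. The difference is only in the second step. The paper simply quotes the standard result (stated as Proposition~\ref{class group}, taken from Neukirch--Schmidt--Wingberg) that for a finite set $X$ of primes, $\OO_{K,X}$ is Dedekind with $\Cl(\OO_{K,X})\cong\Cl(\OO_K)/\langle[\p]:\p\in X\rangle$. You instead supply a direct proof of this fact, by realizing $\OO_{K,X}$ as a genuine localization $T^{-1}\OO_K$ (using finiteness of the class number to build enough elements of $T$) and then analyzing the extension-of-ideals map on class groups. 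So your route is more self-contained, at the cost of reproducing a textbook lemma, while the paper's is a one-line citation; the mathematical content is the same.
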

The following corollaries follow at once from Proposition \ref{Class group OK[gamma]cap K}.
\begin{corollary}
If $K$ is a number field and $\gamma\in\overline{\Q}$, then $\Cl(\OO_{K}[\gamma]\cap K)\cong \Cl(\OO_K)$ if and only if all prime ideals in $X(K,\gamma)$ are principal.
\end{corollary}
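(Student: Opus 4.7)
The plan is to deduce this directly from Proposition \ref{Class group OK[gamma]cap K}, using only the finiteness of the ideal class group.

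First I would invoke Proposition \ref{Class group OK[gamma]cap K} to rewrite the claim: the isomorphism $\Cl(\OO_K[\gamma]\cap K)\cong \Cl(\OO_K)$ is equivalent to saying that the quotient map
$$\Cl(\OO_K)\to \Cl(\OO_K)/\langle[\p]\mid \p\in X(K,\gamma)\rangle$$
has source and target of the same cardinality, i.e.\ that the subgroup $H:=\langle[\p]\mid \p\in X(K,\gamma)\rangle$ is trivial. Here I use the classical fact that $\Cl(\OO_K)$ is finite, so a surjection $G\twoheadrightarrow G/H$ of finite abelian groups is an isomorphism precisely when $H=\{e\}$.

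Second, I would observe that $H=\{e\}$ if and only if each generator $[\p]$ is trivial in $\Cl(\OO_K)$, which by definition of the class group is equivalent to $\p$ being principal for every $\p\in X(K,\gamma)$. Combining the two equivalences gives the statement of the corollary.

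There is no real obstacle here: once Proposition \ref{Class group OK[gamma]cap K} is in hand, the corollary is a two-line consequence of the finiteness of $\Cl(\OO_K)$ and the fact that a subgroup generated by a set $T$ is trivial iff every element of $T$ is trivial. The only subtlety worth mentioning explicitly is the finiteness step, since for infinite abelian groups $G/H\cong G$ does not force $H$ to be trivial.
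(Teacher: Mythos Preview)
Your argument is correct and matches the paper's approach: the paper simply states that this corollary follows at once from Proposition~\ref{Class group OK[gamma]cap K}, and you have supplied exactly the routine details (finiteness of $\Cl(\OO_K)$ forces the quotient to be an isomorphism iff the subgroup generated by the $[\p]$ is trivial, which happens iff each $\p\in X(K,\gamma)$ is principal).
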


\begin{corollary}
If $K$ is a number field and $\gamma\in\overline{\Q}$, then $\OO_{K}[\gamma]\cap K$ is a principal ideal domain if and only if $\Cl(\OO_K)=\langle[\p]\mid \p\in X(K,\gamma)\rangle$.
\end{corollary}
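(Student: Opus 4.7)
The plan is to derive the corollary immediately from Proposition~\ref{Class group OK[gamma]cap K}. The key input is the standard fact that a Dedekind domain is a principal ideal domain if and only if its ideal class group is trivial; this is where one uses that in a Dedekind domain every nonzero ideal factors uniquely into prime ideals, so trivial class group forces every prime (and hence every ideal) to be principal, while conversely a PID has trivial class group by definition.

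Given this, the argument is essentially one line. By Proposition~\ref{Class group OK[gamma]cap K}, the ring $R:=\OO_K[\gamma]\cap K$ is a Dedekind domain and satisfies
\[
\Cl(R)\;\cong\;\Cl(\OO_K)\big/\bigl\langle [\p]\mid \p\in X(K,\gamma)\bigr\rangle.
\]
Therefore $R$ is a PID if and only if the right-hand quotient is the trivial group, which happens precisely when the subgroup $\langle [\p]\mid \p\in X(K,\gamma)\rangle$ equals all of $\Cl(\OO_K)$, i.e.\ when the primes of $X(K,\gamma)$ generate the class group of $\OO_K$.

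There is no real obstacle here: the whole content of the corollary has been packaged into Proposition~\ref{Class group OK[gamma]cap K}, so the corollary is just a translation of ``trivial quotient group'' into ``generating set for the class group,'' combined with the Dedekind-domain criterion for being a PID. The only thing worth flagging is that one should briefly remind the reader why being a Dedekind domain with trivial class group suffices for being a PID, but this is a textbook fact and needs no further argument.
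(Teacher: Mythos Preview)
Your proposal is correct and matches the paper's approach: the paper states that this corollary follows at once from Proposition~\ref{Class group OK[gamma]cap K}, which is exactly what you do.
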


Given a fixed algebraic number $\gamma\in\overline{\Q}$, we consider the problem of determining $X(K,\gamma)$ for every number field $K$. Firstly, if $\gamma\in K$, then Theorem \ref{X cond equiv} implies that $\p\in X(K,\gamma)$ if and only if $v_{\p}(\gamma)<0$. However, if $\gamma\notin K$, then we need to look at the primes of $\OO_{K(\gamma)}$ that are above $\p$. There is a simpler way to test whether $\p\in X(K,\gamma)$.

\begin{definition}
Let $S\subseteq \overline{\Z}$, then $S$ is called a generating set of $\gamma$ if it has the property that for every number field $L$,
$$X(L,\gamma)=\{\q\in\Spec(\OO_L)\mid \q\cap S\neq\emptyset\}.$$
\end{definition}

For example, we will show that $\{15,4+i\}$ is a generating set of $\frac{1}{60+15i}$. This means that for any number field $K$, we have $$X\Big(K,\frac{1}{60+15i}\Big)=\{\p\in\Spec(\OO_K)\mid 15\in \p \text{ or }4+i\in \p\}.$$
In Section \ref{Sec Gen set}, we will show that every algebraic number $\gamma$ has a generating set of finite size.
Proposition \ref{Determine OK[gamma]cap K} describes the ring $\OO_K[\gamma]\cap K$ in terms of $X(K,\gamma)$. We can also describe this ring in terms of a generating set of $\gamma$.
\begin{proposition}\label{OK[gamma] in terms of S}
For a subset $S\subseteq\Zbar$,
$S$ is a generating set of $\gamma$ if and only if for every number field $K$, we have
$$\OO_K[\gamma]\cap K=\OO_K\big[1/\alpha\mid \alpha\in S\cap K\big].$$
\end{proposition}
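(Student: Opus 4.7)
The plan is to prove both directions simultaneously by giving each of the two rings an intrinsic description as a ``ring of fractions with denominators in a prescribed set of primes,'' and then to show that the prescribed set is uniquely determined by the ring. For the first ring, I would apply Proposition~\ref{Determine OK[gamma]cap K} to rewrite
$$\OO_K[\gamma]\cap K=\{\alpha\in K\mid \{\p\in\Spec(\OO_K)\mid v_\p(\alpha)<0\}\subseteq X(K,\gamma)\}.$$

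For the second ring, since $S\subseteq\Zbar$ and $\Zbar\cap K=\OO_K$, we have $S\cap K\subseteq\OO_K$. Set $T_K=\{\p\in\Spec(\OO_K)\mid \p\cap S\neq\emptyset\}$; these are exactly the primes of $\OO_K$ containing some element of $S\cap K$. A routine Dedekind-domain computation---elements of the localization are $\OO_K$-polynomials in the $1/\alpha$, so their denominator primes lie in $T_K$; conversely, any $\beta\in K$ whose denominator primes all lie in $T_K$ can be cleared by a suitable product of elements of $S\cap K$ raised to large enough powers---gives
$$\OO_K[1/\alpha\mid \alpha\in S\cap K]=\{\beta\in K\mid \{\p\mid v_\p(\beta)<0\}\subseteq T_K\}.$$

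With these two descriptions in hand, the forward implication is immediate: if $S$ is a generating set then $X(K,\gamma)=T_K$ for every $K$, so the two rings coincide. For the converse, I would show that any subset $T\subseteq\Spec(\OO_K)$ is recovered from the ring $R=\{\beta\in K\mid \{\p\mid v_\p(\beta)<0\}\subseteq T\}$ via $T=\{\p\mid(\exists\beta\in R)\,v_\p(\beta)<0\}$. The only content is that for each $\p\in T$ some such $\beta$ \emph{exists}: choose an $h$ with $\p^h$ principal, say $\p^h=(a)$, and observe that $1/a\in R$ has negative valuation precisely at $\p$. Applying this recovery principle to both $X(K,\gamma)$ and $T_K$ forces $X(K,\gamma)=T_K$ for every $K$, so $S$ is a generating set.

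The only step with real content is the Dedekind-domain clearing-of-denominators verification in the second description, together with the principal-power trick in the recovery step; both are standard, and I expect them to be the main (mild) obstacle. Everything else is formal manipulation of the two equivalent descriptions.
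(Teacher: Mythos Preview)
Your proposal is correct and uses the same essential ingredients as the paper---Proposition~\ref{Determine OK[gamma]cap K}, the clearing-of-denominators argument in a Dedekind domain, and the principal-power trick $\p^h=(a)$---but packages them more cleanly. The paper proves the two inclusions in each direction by four separate element chases, and in the converse direction invokes Theorem~\ref{X cond equiv} to produce $\beta\in\p$ with $1/\beta\in\OO_K[\gamma]$, then expresses $1/\beta$ as a polynomial in the $1/\alpha_i$ and clears denominators to locate some $\alpha_i\in\p$. You instead show once that both rings are of the form $\OO_{K,T}=\{\beta\in K\mid\{\p:v_\p(\beta)<0\}\subseteq T\}$ for $T=X(K,\gamma)$ and $T=T_K$ respectively, and then observe that $T$ is recoverable from $\OO_{K,T}$; this avoids the explicit appeal to Theorem~\ref{X cond equiv} and reduces the converse to a one-line recovery lemma. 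The tradeoff is that your argument is slightly more abstract, while the paper's is more hands-on; both are short and neither has any real obstacle beyond the routine verifications you flag.
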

\begin{proposition}\label{Zbar[gamma in terms of S]}
Suppose $S$ is a generating set of $\gamma$. Then we have
$$\Zbar[\gamma]=\Zbar\big[1/\alpha\mid \alpha\in S\big].$$
\end{proposition}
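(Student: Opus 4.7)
The strategy is to prove the two containments separately, in both cases by applying Proposition~\ref{OK[gamma] in terms of S} to a carefully chosen number field. The proposition already does essentially all the work on the level of individual number fields; the task here is just to unpack it over $\Zbar$.

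For the containment $\Zbar[1/\alpha \mid \alpha \in S] \subseteq \Zbar[\gamma]$, I fix an element $\alpha \in S$ and take any number field $L$ containing both $\alpha$ and $\gamma$, for example $L = \Q(\alpha,\gamma)$. Since $\alpha$ lies in $S \cap L$, Proposition~\ref{OK[gamma] in terms of S} gives
$$1/\alpha \in \OO_L\big[1/\alpha' \mid \alpha'\in S\cap L\big] = \OO_L[\gamma]\cap L \subseteq \OO_L[\gamma] \subseteq \Zbar[\gamma].$$
Combined with $\Zbar \subseteq \Zbar[\gamma]$, this yields the first containment.

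For the reverse containment $\Zbar[\gamma] \subseteq \Zbar[1/\alpha \mid \alpha \in S]$, it suffices to show that $\gamma$ itself lies in $\Zbar[1/\alpha \mid \alpha \in S]$. Apply Proposition~\ref{OK[gamma] in terms of S} to $K = \Q(\gamma)$. Then $\gamma \in \OO_K[\gamma]\cap K = \OO_K[1/\alpha \mid \alpha \in S\cap K]$, so $\gamma$ can be written as a polynomial in the elements $\{1/\alpha : \alpha \in S\cap K\}$ with coefficients in $\OO_K \subseteq \Zbar$. This exhibits $\gamma$ as an element of $\Zbar[1/\alpha \mid \alpha \in S]$, completing the proof.

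There is no real obstacle here beyond correctly invoking Proposition~\ref{OK[gamma] in terms of S}; the only mild point of care is the choice of auxiliary number field in each direction (one containing $\alpha$ and $\gamma$ for the first inclusion, and $\Q(\gamma)$ itself for the second), which ensures that the finite-level description of $\OO_K[\gamma]\cap K$ produced by the proposition actually captures the element we need.
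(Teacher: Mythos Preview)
Your proof is correct and follows essentially the same approach as the paper, invoking Proposition~\ref{OK[gamma] in terms of S} for a suitably chosen number field in each direction. Your second inclusion is slightly streamlined: rather than handling an arbitrary element $\beta\in\Zbar[\gamma]$ as the paper does (by choosing $K$ to contain the coefficients of an expression for $\beta$), you observe that it suffices to place $\gamma$ itself in $\Zbar[1/\alpha\mid\alpha\in S]$, which is a clean shortcut.
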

Given an algebraic number $\gamma$, we define its denominator ideal to be $$\D_{\gamma}=\{a\in \overline{\Z}\mid a\gamma\in \overline{\Z}\}.$$
Note that $\D_{\gamma}\cap \Z= d(\gamma)\Z$.
\begin{theorem}\label{radical Dgamma 3 results}
Let $S$ be a finite generating set of $\gamma$, then as ideals of $\overline{\Z}$, we have
$$\sqrt{\D_{\gamma}}=\sqrt{\Big(\prod_{\alpha\in S}\alpha\Big)\overline{\Z}}.$$
Let $K$ be a number field, then as ideals of $\OO_K$ we have
$$\sqrt{\D_{\gamma}\cap \OO_K}=\prod_{\p\in Y(K,\gamma)}\p$$
and
$$\sqrt{\Big(\prod_{\alpha\in S\cap K}\alpha\Big)\OO_K}=\prod_{\p\in X(K,\gamma)}\p.$$
\end{theorem}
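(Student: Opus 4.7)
The plan is to prove each of the three equalities by characterizing, for every prime $\mathfrak{P}$ of the relevant ring, exactly when $\mathfrak{P}$ contains each of the two ideals being compared, and then invoking $\sqrt{I}=\bigcap_{\mathfrak{P}\supseteq I}\mathfrak{P}$. In both Dedekind cases these intersections become finite products because $Y(K,\gamma)$ is finite: Theorem~\ref{Y cond equiv} cuts it out from the finitely many primes at which some coefficient $b_i$ of $f_{K,\gamma}$ has negative valuation, and $X(K,\gamma)\subseteq Y(K,\gamma)$.

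The third identity is essentially immediate. Since $S\cap K\subseteq\overline{\Z}\cap K=\OO_K$, the product $\prod_{\alpha\in S\cap K}\alpha$ lies in $\OO_K$, and a prime $\mathfrak{p}$ of $\OO_K$ contains it iff some $\alpha\in S\cap K$ lies in $\mathfrak{p}$. Because $\mathfrak{p}\subseteq K$, any $\alpha\in\mathfrak{p}\cap S$ automatically belongs to $S\cap K$, so the condition simplifies to $\mathfrak{p}\cap S\neq\emptyset$, which by definition of a generating set is exactly $\mathfrak{p}\in X(K,\gamma)$.

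For the second identity, $\D_\gamma\cap\OO_K$ is clearly an integral ideal of $\OO_K$. If $\mathfrak{p}\in Y(K,\gamma)$, Theorem~\ref{Y cond equiv} furnishes $\mathfrak{q}\in\Spec(\OO_{K(\gamma)})$ above $\mathfrak{p}$ with $v_{\mathfrak{q}}(\gamma)<0$; for any $a\in\D_\gamma\cap\OO_K$ the element $a\gamma$ lies in $\overline{\Z}\cap K(\gamma)=\OO_{K(\gamma)}$, so $v_{\mathfrak{q}}(a)\geq -v_{\mathfrak{q}}(\gamma)>0$ and $a\in\mathfrak{p}$. Conversely, if $\mathfrak{p}\notin Y(K,\gamma)$ then $v_{\mathfrak{p}}(b_i)\geq 0$ for every $i$; the integral ideal $D=\prod_{\mathfrak{q}}\mathfrak{q}^{\max(0,-\min_i v_{\mathfrak{q}}(b_i))}$ of $\OO_K$ is coprime to $\mathfrak{p}$, so prime avoidance yields $a\in D\setminus\mathfrak{p}$ with $ab_i\in\OO_K$ for all $i$. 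Multiplying $f_{K,\gamma}(\gamma)=0$ by $a^n$ shows that $a\gamma$ is a root of the monic polynomial $X^n+\sum_{i=0}^{n-1}a^{n-i}b_iX^i\in\OO_K[X]$, whence $a\gamma\in\overline{\Z}$ and $a\in(\D_\gamma\cap\OO_K)\setminus\mathfrak{p}$.

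The first identity is the subtlest step because $\overline{\Z}$ is only Pr\"ufer of Krull dimension one rather than Dedekind. Write $S=\{\alpha_1,\ldots,\alpha_k\}$. A prime $\mathfrak{P}$ of $\overline{\Z}$ contains $\prod_i\alpha_i$ iff some $\alpha_i\in\mathfrak{P}$, so it suffices to show that $\D_\gamma\subseteq\mathfrak{P}$ iff some $\alpha_i\in\mathfrak{P}$. I would pass to the valuation ring $\overline{\Z}_{\mathfrak{P}}$ and use Proposition~\ref{Zbar[gamma in terms of S]} in both directions. If some $\alpha_i\in\mathfrak{P}$ but $a\in\D_\gamma\setminus\mathfrak{P}$, then $\gamma=(a\gamma)/a\in\overline{\Z}_{\mathfrak{P}}$, forcing $1/\alpha_i\in\overline{\Z}[\gamma]\subseteq\overline{\Z}_{\mathfrak{P}}$ and contradicting $\alpha_i\in\mathfrak{P}$. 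Conversely, if no $\alpha_i$ lies in $\mathfrak{P}$ then each $1/\alpha_i\in\overline{\Z}_{\mathfrak{P}}$, so $\gamma\in\overline{\Z}[1/\alpha_1,\ldots,1/\alpha_k]=\overline{\Z}[\gamma]\subseteq\overline{\Z}_{\mathfrak{P}}$; writing $\gamma=b/s$ with $s\in\overline{\Z}\setminus\mathfrak{P}$ gives $s\in\D_\gamma\setminus\mathfrak{P}$. The main obstacle is exactly this step, since Proposition~\ref{Zbar[gamma in terms of S]} is what links $\D_\gamma$ to the generating set in the absence of unique factorization in $\overline{\Z}$.
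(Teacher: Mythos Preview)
Your argument is correct. For the second and third identities your prime-by-prime characterization is essentially the paper's own route (Lemmas~\ref{prod Y} and~\ref{prod p in X}); the only cosmetic difference is that for $\mathfrak p\notin Y(K,\gamma)$ you exhibit an explicit $a\in(\D_\gamma\cap\OO_K)\setminus\mathfrak p$ via the clearing-denominators ideal $D$ (which is exactly the paper's $I_{K,\gamma}$), whereas the paper instead shows directly that every $\beta\in\prod_{\mathfrak p\in Y}\mathfrak p$ satisfies $\beta^N\gamma\in\OO_{K(\gamma)}$ for large $N$.

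For the first identity your route genuinely differs from the paper's Lemma~\ref{Radical of Dgamma}. The paper never invokes primes of $\overline\Z$ or the Pr\"ufer structure: given $\beta$, it passes to the number field $L=\Q(\beta,\gamma,S)$ and argues entirely with discrete valuations there, using only the \emph{definition} of a generating set (namely $X(L,\gamma)=\{\mathfrak p:\mathfrak p\cap S\neq\emptyset\}$) to match up the primes where $v_{\mathfrak p}(\gamma)<0$ with those where $v_{\mathfrak p}(\prod_{\alpha\in S}\alpha)>0$. Your argument instead localizes $\overline\Z$ at a prime $\mathfrak P$ and leans on Proposition~\ref{Zbar[gamma in terms of S]} to transport information between $\D_\gamma$ and $S$. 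Your version is slicker and avoids choosing an auxiliary number field, but it is logically downstream of Proposition~\ref{Zbar[gamma in terms of S]} (hence of Proposition~\ref{OK[gamma] in terms of S}); the paper's version is more self-contained, needing only the raw definition of a generating set.
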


In Theorem \ref{X(gamma1=Xgamma2)}, we show that two algebraic numbers $\gamma_1,\gamma_2$ satisfy $\D_{\gamma_1}=\D_{\gamma_2}$ if and only if for any number field $K$, $X(K,\gamma_1)=X(K,\gamma_2)$.
In Proposition \ref{local case}, we obtain an analog of Proposition \ref{Determine OK[gamma]cap K} for finite extensions of $\Q_p$.

\section{Primes in denominators}
In this section we will prove Theorem \ref{X cond equiv}, Theorem \ref{Y cond equiv}, Theorem \ref{X=empty result}, Proposition \ref{Determine OK[gamma]cap K} and Proposition \ref{Class group OK[gamma]cap K}.
Let $K$ be a number field and $\gamma\in\overline{\Q}$ be an algebraic number. Recall that the monic irreducible polynomial of $\gamma$ in $K[x]$ is denoted as $f_{K,\gamma}(x)=b_nx^n+\dots+b_0\in K[x]$.
We have $b_n=1$. For each non-zero prime ideal $\p\in\Spec(\OO_K)$, let
$$\alpha_{\p}=-\min\{v_{\p}(b_i)\mid 0\leq i\leq n\}.$$
Clearly $\alpha_{\p}=0$ for all but finitely many primes $\p$. Since $b_n=1$, we also know that all $\alpha_{\p}\geq 0$. Consider the ideal
$I_{K,\gamma}=\prod_{\p\in\Spec(\OO_K)}\p^{\alpha_{\p}}$.

\begin{proposition}\label{OK[gamma]}
Let $K$, $\gamma$ and $I_{K,\gamma}$ be as above, then we have
$$\OO_K[\gamma]\cong \OO_K[x]/(f_{K,\gamma}(x)I_{K,\gamma}[x]).$$
\end{proposition}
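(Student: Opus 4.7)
The plan is to construct the evaluation homomorphism $\phi\colon\OO_K[x]\to\OO_K[\gamma]$ sending $g(x)\mapsto g(\gamma)$, observe that it is surjective by the very definition of $\OO_K[\gamma]$, and then identify $\ker\phi$ with the ideal $f_{K,\gamma}(x)\,I_{K,\gamma}[x]\subseteq\OO_K[x]$. The first isomorphism theorem then yields the claim. That $f_{K,\gamma}(x)\,I_{K,\gamma}[x]$ really is an ideal of $\OO_K[x]$ is forced by the definition of $\alpha_\p$: for any $a\in I_{K,\gamma}$ one has $v_\p(a)\geq\alpha_\p$ and $v_\p(b_i)\geq-\alpha_\p$, so $ab_i\in\OO_K$; closure under multiplication by $\OO_K[x]$ then follows because $I_{K,\gamma}$ is an ideal of $\OO_K$.

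The workhorse is a Gauss-lemma style identity for content. For $g\in K[x]$ with coefficients $c_0,\ldots,c_d$, set $v_\p(g):=\min_i v_\p(c_i)$. I would first establish the multiplicativity
\[
v_\p(g_1 g_2)=v_\p(g_1)+v_\p(g_2) \qquad (g_1,g_2\in K[x])
\]
by scaling each $g_i$ by a suitable power of a $\p$-uniformizer so that both factors land in $\OO_{K,\p}[x]$ with at least one coefficient a $\p$-unit, reducing modulo $\p$, and invoking the fact that $(\OO_{K,\p}/\p)[x]$ is a domain. Specializing to $f_{K,\gamma}$, the definition of $\alpha_\p$ gives $v_\p(f_{K,\gamma})=-\alpha_\p$ at every prime.

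With this in hand, the kernel computation is short. If $g\in\ker\phi$, then $g(\gamma)=0$, and since $f_{K,\gamma}$ is the monic minimal polynomial of $\gamma$ over $K$, polynomial division in $K[x]$ produces $h\in K[x]$ with $g=h\cdot f_{K,\gamma}$. Multiplicativity gives $v_\p(g)=v_\p(h)-\alpha_\p$, and $g\in\OO_K[x]$ forces $v_\p(h)\geq\alpha_\p$ for every $\p$. Because the primes $\p$ are pairwise coprime one has $I_{K,\gamma}=\prod_\p\p^{\alpha_\p}=\bigcap_\p\p^{\alpha_\p}$, so this is exactly the statement that each coefficient of $h$ lies in $I_{K,\gamma}$, i.e.\ $h\in I_{K,\gamma}[x]$. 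Conversely, any $h\in I_{K,\gamma}[x]$ satisfies $v_\p(h\,f_{K,\gamma})\geq 0$ for all $\p$, placing $h\,f_{K,\gamma}$ in $\OO_K[x]\cap\ker\phi$.

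The only delicate point I anticipate is the Gauss-lemma step itself, since $f_{K,\gamma}$ typically has non-integral coefficients and one must justify the content identity for arbitrary elements of $K[x]$; everything else is bookkeeping with the $\alpha_\p$ and the definition of $I_{K,\gamma}$.
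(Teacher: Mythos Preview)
Your proposal is correct and follows essentially the same approach as the paper: both use the evaluation map $\phi\colon\OO_K[x]\to\OO_K[\gamma]$, write any $g\in\ker\phi$ as $f_{K,\gamma}\cdot h$ in $K[x]$, and then use a Gauss-lemma content argument to force $h\in I_{K,\gamma}[x]$. The only cosmetic difference is packaging: you abstract the multiplicativity $v_\p(g_1g_2)=v_\p(g_1)+v_\p(g_2)$ as a standalone lemma (proved via reduction mod $\p$), whereas the paper carries out the same computation inline by selecting the extremal indices $t,k$ with $v_\p(b_t)=-\alpha_\p$ and $v_\p(d_k)=\beta_\p$ and reading off $v_\p(c_{t+k})=\beta_\p-\alpha_\p$ directly.
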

\begin{proof}
First we check that $f_{K,\gamma}(x)I_{K,\gamma}[x]\subseteq \OO_K[x]$. For any $b\in I_{K,\gamma}$ and $\p\in\Spec(\OO_K)$, we know that $\p^{\alpha_{\p}}| I_{K,\gamma}$ and $I_{K,\gamma}|(b)$, so $v_{\p}(b)\geq \alpha_{\p}$. By the definition of $\alpha_{\p}$, we see that for each $0\leq i\leq n$, $v_{\p}(bb_i)\geq 0$. Since this holds for all $\p\in\Spec(\OO_K)$, we have $bb_i\in \OO_K$. This means that for any $b\in I_{K,\gamma}$, we have $bf_{K,\gamma}(x)\in \OO_K[x]$ and hence $f_{K,\gamma}(x)I_{K,\gamma}[x]\subseteq \OO_K[x]$.

Now consider the homomorphism $\phi$ from $\OO_K[x]$ to $\OO_K[\gamma]$ that sends $x$ to $\gamma$. It is clearly surjective. It is also clear that $f_{K,\gamma}(x)I_{K,\gamma}[x]\subseteq \ker(\phi)$. It only remains to show that $\ker(\phi)\subseteq f_{K,\gamma}(x)I_{K,\gamma}[x]$.

Pick some $g(x)\in\ker(\phi)$, so $g(x)\in \OO_K[x]$. Since $g(\gamma)=0$, we know that $f_{K,\gamma}|g$ in $K[x]$. So we have $h(x)\in K[x]$, such that $g(x)=f_{K,\gamma}(x)h(x)$. Say
$h(x)=d_mx^m+\dots+d_0$ and $g(x)=c_{m+n}x^{m+n}+\dots +c_0$. So $d_i\in K$ and $c_i\in \OO_K$. For $\p\in\Spec(\OO_K)$, denote
$\beta_{\p}=\min\{v_{\p}(d_j)\mid 0\leq j\leq m\}$.

Fix a prime $\p\in\Spec(\OO_K)$. Let $t$ be the largest integer for which $v_{\p}(b_t)=-\alpha_{\p}$. Also let $k$ be the largest integer for which $v_{\p}(d_k)=\beta_{\p}$. Then we have
$$c_{t+k}=\sum_{i>0}b_{t+i}d_{k-i}+b_td_k+\sum_{j>0}b_{t-j}d_{k+j}.$$
Now, notice that $v_{\p}(b_td_k)=\beta_{\p}-\alpha_{\p}$. For $i>0$, we have $v_{\p}(b_{t+i})>-\alpha_{\p}$ and $v_{\p}(d_{k-i})\geq \beta_{\p}$ meaning $v_{\p}(b_{t+i}d_{k-i})>\beta_{\p}-\alpha_{\p}$. Similarly for $j>0$ we have $v_{\p}(b_{t-j}d_{k+j})>\beta_{\p}-\alpha_{\p}$. We therefore see that
$v_{\p}(c_{t+k})=\beta_{\p}-\alpha_{\p}$.
Next since $c_{t+k}\in \OO_K$, we conclude $\alpha_{\p}\leq \beta_{\p}$. This means that for each $0\leq j\leq m$, we have
$v_{\p}(d_j)\geq \alpha_{\p}=v_{\p}(I_{K,\gamma})$.

Since this holds for each prime $\p\in\Spec(\OO_K)$, we conclude that $I_{K,\gamma}|(d_j)$, that is $d_j\in I_{K,\gamma}$. This implies that $h(x)\in I_{K,\gamma}[x]$ and $g(x)\in f_{K,\gamma}(x)I_{K,\gamma}[x]$.
We have shown that $\ker(\phi)=f_{K,\gamma}(x)I[x]$ and hence we conclude that $\OO_K[\gamma]\cong \OO_K[x]/(f_{K,\gamma}(x)I_{K,\gamma}[x])$.
\end{proof}

Let $K$, $\gamma$ and $f_{K,\gamma}(x)=b_nx^n+\dots+b_0$ be as above ($b_n=1$). Denote
$$S_{K,1}(\gamma)=\Big\{\p\in\Spec(\OO_K)\mid \text{there exists } \alpha\in \p \text{ such that } 1/\alpha\in\OO_K[\gamma]\Big\},$$
$$S_{K,2}(\gamma)=\{\p\in\Spec(\OO_K)\mid \text{for every } i\in [1,n], \text{ we have } v_{\p}(b_i)>v_{\p}(b_0)\},$$
$$S_{K,3}(\gamma)=\{\p\in\Spec(\OO_{K})\mid \text{for every } \q\in\Spec(\OO_{K(\gamma)}),\text{ if } \q\cap \OO_K=\p\text{ then } v_{\q}(\gamma)<0\}.$$
Recall that Theorem \ref{X cond equiv} asks us to show that $S_{K,1}(\gamma)=S_{K,2}(\gamma)=S_{K,3}(\gamma)$. We will do this by showing that $S_{K,1}(\gamma)\supseteq S_{K,2}(\gamma)$, $S_{K,2}(\gamma)\supseteq S_{K,3}(\gamma)$ and $S_{K,3}(\gamma)\supseteq S_{K,1}(\gamma)$.

\begin{lemma}\label{2 in 1}
Given number field $K$ and $\gamma\in\overline{\Q}$, we have
$S_{K,2}(\gamma)\subseteq S_{K,1}(\gamma)$.
\end{lemma}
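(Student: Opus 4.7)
My plan is to take $\p \in S_{K,2}(\gamma)$ and exhibit a concrete $\alpha \in \p$ with $1/\alpha \in \OO_K[\gamma]$, in two stages. First I will manufacture an element $\beta \in \OO_K[\gamma]\cap K$ whose fractional ideal has $\p$ as the only prime in its denominator; then, using the class group of $\OO_K$, I will principalize and extract $1/\alpha$ via a B\'ezout identity.

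First I would observe that $b_n=1$ makes $v_\p(b_n)=0$, so the hypothesis $v_\p(b_i)>v_\p(b_0)$ (at $i=n$) forces $e:=-v_\p(b_0)\geq 1$, and in general $v_\p(b_i)\geq -e+1$ for $i=1,\dots,n$. By weak approximation in $\OO_K$, I pick $d\in\OO_K$ with $v_\p(d)=e-1$ and $v_\q(d)=\alpha_\q:=-\min_i v_\q(b_i)$ at each of the finitely many primes $\q\neq \p$ with $\alpha_\q>0$. A valuation-by-valuation check then gives $db_i\in\OO_K$ for every $i\geq 1$, so multiplying $\sum_{i=0}^n b_i\gamma^i=0$ by $d$ yields
$$\beta := db_0 = -\sum_{i=1}^n(db_i)\gamma^i \;\in\; \OO_K[\gamma]\cap K,$$
with $v_\p(\beta)=-1$ and $v_\q(\beta)=\alpha_\q+v_\q(b_0)\geq 0$ for every $\q\neq\p$ (because $v_\q(b_0)\geq \min_i v_\q(b_i)=-\alpha_\q$).

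For the second stage, I write the fractional ideal $(\beta)=\mathfrak{c}/\p$ where $\mathfrak{c}=\prod_{\q\neq\p}\q^{v_\q(\beta)}$ is integral and coprime to $\p$, and set $h=|\Cl(\OO_K)|$. Then $\p^h$ and $\mathfrak{c}^h$ are both principal, say $\p^h=(\alpha)$ and $\mathfrak{c}^h=(c)$, with $\alpha\in\p$ and $(c,\alpha)=\OO_K$ by coprimality. The ideal identity $(\beta^h)=\mathfrak{c}^h/\p^h=(c/\alpha)$ yields $\beta^h=uc/\alpha$ for some $u\in\OO_K^*$, and B\'ezout in $\OO_K$ gives $a_1,a_2\in\OO_K$ with $a_1c+a_2\alpha=1$; dividing by $\alpha$ delivers
$$\frac{1}{\alpha} \;=\; a_1\frac{c}{\alpha}+a_2 \;=\; a_1 u^{-1}\beta^h+a_2 \;\in\; \OO_K[\beta]\;\subseteq\;\OO_K[\gamma],$$
with $\alpha\in\p^h\subseteq \p$, placing $\p$ in $S_{K,1}(\gamma)$.

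The main obstacle is the second stage: merely having an element $\beta\in\OO_K[\gamma]\cap K$ with $v_\p(\beta)<0$ does not by itself produce a single $\alpha\in\p$ with $1/\alpha\in\OO_K[\gamma]$, because if $\OO_K$ is not a PID the ``denominator'' ideal $\p$ need not be generated by a single element. The class-group trick of raising $\beta$ to the $h$-th power principalizes both the numerator and denominator ideals of $(\beta)^h$, after which coprimality plus B\'ezout finishes the job.
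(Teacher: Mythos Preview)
Your proof is correct. Both your argument and the paper's follow the same two-stage architecture: first produce an element of $\OO_K[\gamma]\cap K$ whose fractional ideal carries $\p$ in its denominator, then principalize via the class number and finish with a B\'ezout identity. The difference lies in the first stage. The paper clears \emph{all} denominators by choosing $a\in\OO_K$ with every $ab_i\in\OO_K$, obtaining $c_0=ab_0\in\OO_K$; it then has to raise $c_0$ to the $h$-th power, expand $(-\sum_{i\ge 1}c_i\gamma^i)^h$ multinomially, and use the strict inequality $v_\p(c_i)>v_\p(c_0)$ term-by-term to squeeze out one extra power of the generator of $\p^h$. You instead use approximation to \emph{under}-clear at $\p$ by exactly one, so that $\beta=db_0$ already satisfies $v_\p(\beta)=-1$ and $v_\q(\beta)\ge 0$ elsewhere; this lets you skip the multinomial expansion entirely and pass directly to the principalization step. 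Your route is slightly more streamlined, at the cost of invoking approximation to control $v_\p(d)$ exactly, whereas the paper's choice of $a$ is cruder but requires no such precision. One cosmetic point: your displayed equality $v_\q(\beta)=\alpha_\q+v_\q(b_0)$ is only literally true at the primes $\q$ where you prescribed $v_\q(d)=\alpha_\q$; at the remaining primes (those with $\alpha_\q=0$) you only know $v_\q(d)\ge 0$, but the conclusion $v_\q(\beta)\ge 0$ still holds there since $v_\q(b_0)\ge 0$.
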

\begin{proof}
Suppose $\p \in S_{K,2}(\gamma)$. Pick $a\in\OO_K$ such that each $ab_i\in \OO_K$. Denote $c_i=ab_i\in\OO_K$. Then we have $v_\p(c_n),\dots,v_\p(c_1)>v_\p(c_0)$ and  
$$c_0=-c_n\gamma^n-c_{n-1}\gamma^{n-1}-\dots-c_1\gamma.$$
Suppose $(c_0)=\p^e J$, where $e=v_{\p}(c_0)$ and $J+\p=(1)$. Let $h$ be the class number of $K$. Suppose $\p^h=(\beta)$ and $J^h=(\alpha)$. Therefore $c_0^h=\beta^e\alpha$ (possibly after replacing $\alpha$ by a unit multiple) and $(\alpha)+(\beta)=(1)$. Moreover, $\beta\in\p$, $v_{\p}(\beta)=h$ and $v_{\p'}(\beta)=0$ for any $\p'\neq\p$.
Now, we have
$$\beta^e\alpha=c_0^h=(-c_n\gamma^n-c_{n-1}\gamma^{n-1}-\dots-c_1\gamma)^h=(-1)^h\sum_{1\leq j_1\leq\dots\leq j_h\leq n} c_{j_1}\dots c_{j_h} \gamma^{\sum_{i=1}^{h}j_i}.$$
Notice that $v_{\p}(c_{j_1}\dots c_{j_h})\geq h(e+1)=v_{\p}(\beta^{e+1})$. This means that $v_{\p}\left(\frac{c_{j_1}\dots c_{j_h}}{\beta^{e+1}}\right)\geq 0$. Since $v_{\p'}(\beta)=0$ for other primes $\p'$, we conclude that $\frac{c_{j_1}\dots c_{j_h}}{\beta^{e+1}}\in\OO_K$. Therefore,
$$\frac{\alpha}{\beta}=(-1)^h\sum_{1\leq j_1\leq\dots\leq j_h\leq n} \frac{c_{j_1}\dots c_{j_h}}{\beta^{e+1}} \gamma^{\sum_{i=1}^{h}j_i} \in \OO_K[\gamma].$$
Now since $(\alpha)+(\beta)=(1)$, there are $x, y \in \OO_K$ such that $x\alpha+y\beta=1$. Thus, $\frac{1}{\beta}=x\frac{\alpha}{\beta}+y\in\okgm$. So $\beta \in \p$ and $\frac{1}{\beta} \in \okgm$. Thus $\p$ in $S_{K,1}(\gamma)$.
\end{proof}

\begin{lemma}\label{1 in 3}
Given a number field $K$ and $\gamma\in\overline{\Q}$, we have
$S_{K,1}(\gamma)\subseteq S_{K,3}(\gamma)$.
\end{lemma}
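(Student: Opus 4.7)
My plan is to argue by contradiction using the localization of $\OO_{K(\gamma)}$ at a prime above $\p$. Fix $\p \in S_{K,1}(\gamma)$, so that there exists $\alpha \in \p$ with $1/\alpha \in \OO_K[\gamma]$, and let $\q \in \Spec(\OO_{K(\gamma)})$ satisfy $\q \cap \OO_K = \p$. I need to show $v_\q(\gamma) < 0$.

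Suppose instead that $v_\q(\gamma) \geq 0$, and consider the localization $R := (\OO_{K(\gamma)})_\q$ inside $K(\gamma)$. Since $\OO_K \subseteq \OO_{K(\gamma)} \subseteq R$ and $\gamma \in R$ by assumption, we have $\OO_K[\gamma] \subseteq R$. In particular $1/\alpha \in R$, which forces $v_\q(\alpha) \leq 0$. On the other hand, $\alpha \in \p \subseteq \q \cap \OO_K \subseteq \q$, so $v_\q(\alpha) \geq 1$, a contradiction. Hence $v_\q(\gamma) < 0$, and since $\q$ was arbitrary above $\p$, we obtain $\p \in S_{K,3}(\gamma)$.

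This argument is essentially immediate once the right framework is set up; the only step that requires care is justifying the inclusion $\OO_K[\gamma] \subseteq R$, which reduces to verifying that $\gamma$ lies in the discrete valuation ring $R$ precisely when $v_\q(\gamma) \geq 0$. This is the defining property of the $\q$-adic valuation on $K(\gamma)$, so no substantial obstacle arises. I expect the whole lemma to occupy just a few lines.
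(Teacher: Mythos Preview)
Your proof is correct and essentially the same as the paper's: both observe that $\alpha\in\p\subseteq\q$ forces $v_\q(1/\alpha)<0$, so the ring $\OO_K[\gamma]$ cannot lie in the valuation ring at $\q$, hence $v_\q(\gamma)<0$. The paper phrases this directly (some term $c_k\gamma^k$ in the expansion of $1/\alpha$ must have negative valuation, whence $v_\q(\gamma)<0$), while you frame it contrapositively via the inclusion $\OO_K[\gamma]\subseteq(\OO_{K(\gamma)})_\q$, but the content is identical.
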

\begin{proof}
Suppose we have a prime $\p$ in $S_{K,1}(\gamma)$. This means that we have $\alpha\in\p$ and $c_k\in\OO_K$ such that
$\frac{1}{\alpha}=\sum_{k=0}^{m}c_k\gamma^k$. Let $\q$ be a prime of $\OO_{K[\gamma]}$ above $\p$. Since $\alpha\in\p\subseteq \q$, we see that $v_{\q}\left(\sum_{k=0}^{m}c_k\gamma^k\right)<0$. This implies there is some $k$ for which $v_{\q}(c_k\gamma^k)<0$. Since $c_k\in\OO_K$, we conclude that $v_{\q}(\gamma)<0$.
\end{proof}

Let $\OO_{K\p}$ be the completion of $\OO_K$ at $\p$.
Let $\overline{\OO_{K\p}}$ be an algebraic closure of $\OO_{K\p}$. Let $\overline{v}$ be the unique valutaion on $\overline{\OO_{K\p}}$ that extends $v_{\p}$. Say we have
$$f_{K,\gamma}(x)=(x-\gamma_1)\dots(x-\gamma_n)$$
over $\overline{\OO_{K\p}}$.
Suppose we have the following factorisation in $\OO_{K(\gamma)}$
$$\p\OO_{K(\gamma)}=\q_1^{e_1}\dots \q_r^{e_r}.$$
It is known that
$$\{e_iv_{\q_i}(\gamma)\mid i\in \{1,\dots,r\}\}=\left\{\overline{v}(\gamma_j)\mid j\in\{1,\dots,n\}\right\}.$$

\begin{lemma}\label{3 in 2}
Given a number field $K$ and $\gamma\in\overline{\Q}$, we have
$S_{K,3}(\gamma)\subseteq S_{K,2}(\gamma)$.
\end{lemma}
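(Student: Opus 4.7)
The plan is to use the identity
$$\{e_i v_{\q_i}(\gamma)\mid i\in\{1,\dots,r\}\} = \bigl\{\bar v(\gamma_j)\mid j\in\{1,\dots,n\}\bigr\}$$
stated just before the lemma to translate the hypothesis $\p\in S_{K,3}(\gamma)$ into a uniform statement about the conjugates of $\gamma$. Since every ramification index $e_i$ is strictly positive, requiring $v_{\q_i}(\gamma)<0$ for every $\q_i$ above $\p$ is equivalent to $\bar v(\gamma_j)<0$ for every $j\in\{1,\dots,n\}$.

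Next I would invoke Vieta's formulas on the factorisation $f_{K,\gamma}(x)=\prod_{j=1}^n(x-\gamma_j)$, which yields $b_{n-k}=(-1)^k e_k(\gamma_1,\dots,\gamma_n)$ for $0\leq k\leq n$, where $e_k$ is the $k$-th elementary symmetric polynomial. In particular
$$v_\p(b_0)=\bar v(\gamma_1\cdots\gamma_n)=\sum_{j=1}^n \bar v(\gamma_j).$$
For $i\in[1,n]$, set $k=n-i\in[0,n-1]$. Every monomial appearing in $e_k(\gamma_1,\dots,\gamma_n)$ has the form $\gamma_{j_1}\cdots\gamma_{j_k}$ for some $k$-element subset $\{j_1,\dots,j_k\}\subseteq\{1,\dots,n\}$, and its $\bar v$-valuation differs from $v_\p(b_0)$ by exactly $-\sum_{j\notin\{j_1,\dots,j_k\}}\bar v(\gamma_j)$, a sum of $n-k\geq 1$ strictly positive quantities. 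Hence each monomial has valuation strictly greater than $v_\p(b_0)$, and the ultrametric inequality gives
$$v_\p(b_i)=\bar v(b_i)\geq \min_{\{j_1,\dots,j_k\}}\bar v(\gamma_{j_1}\cdots\gamma_{j_k})>v_\p(b_0),$$
which is exactly the condition for $\p\in S_{K,2}(\gamma)$.

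There is no real obstacle here; the substantive input is the relation between $v_\q$ and $\bar v$ supplied by the paper, and the rest is an elementary valuation comparison built from Vieta's formulas and the ultrametric bound. The only housekeeping item is to observe that the restriction of $\bar v$ to $K$ coincides with $v_\p$ (so that $\bar v(b_i)=v_\p(b_i)$ for the coefficients $b_i\in K$), which is immediate from the choice of $\bar v$ as the unique extension of $v_\p$ to $\overline{\OO_{K\p}}$.
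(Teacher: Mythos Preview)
Your argument is correct and follows essentially the same route as the paper: translate the hypothesis into $\bar v(\gamma_j)<0$ for all $j$ via the displayed identity, then compare $\bar v(b_i)$ with $\bar v(b_0)=\sum_j\bar v(\gamma_j)$ using Vieta's formulas and the ultrametric inequality. The only cosmetic difference is that you phrase the key inequality in terms of the complement of each index set, whereas the paper writes $\min_{j_1<\dots<j_t}\sum_k\bar v(\gamma_{j_k})>\sum_j\bar v(\gamma_j)$ directly; these are the same comparison.
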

\begin{proof}
Suppose we have $\p\in S_{K,3}(\gamma)$. This means that all $v_{\q_i}(\gamma)<0$ and hence all $\overline{v}(\gamma_j)<0$. Now, we have
$$\overline{v}\left(b_0\right)=\sum_{j=1}^n\overline{v}(\gamma_j)<0=\overline{v}(b_n).$$
Also for $1\leq t\leq n-1$ we have
$$\overline{v}\left(b_{n-t}\right)\geq \min_{1\leq j_1<\dots<j_t\leq n} \sum_{k=1}^{t}\overline{v}(\gamma_{j_k}) >\sum_{j=1}^n\overline{v}(\gamma_j)=\overline{v}\left(b_0\right).$$
We conclude that $\p\in S_{K,2}(\gamma)$.
\end{proof}

\begin{customthm}{\ref{X cond equiv}}
Given a number ring $\OO_K$, an algebraic number $\gamma$ and a prime $\p\in\Spec(\OO_K)$, the following are equivalent:
\begin{itemize}
    \item For every $i\in [1,n]$: $v_{\p}(b_i)>v_{\p}(b_0)$.
    \item There exists an $ \alpha\in \p$ for which $\frac{1}{\alpha}\in\OO_K[\gamma]$.
    \item For every $\q\in\Spec(\OO_{K(\gamma)})$: if $\q\cap \OO_K=\p$ then $v_{\q}(\gamma)<0$.
\end{itemize}
\end{customthm}
\begin{proof}
This follows from Lemma \ref{2 in 1}, Lemma \ref{1 in 3} and Lemma \ref{3 in 2}.
\end{proof}

\begin{customthm}{\ref{Y cond equiv}}
Given a number ring $\OO_K$, an algebraic number $\gamma$ and a prime $\p\in\Spec(\OO_K)$, the following are equivalent:
\begin{itemize}
    \item There exists $i\in [0,n-1]$ for which $v_{\p}(b_i)<0$.
    \item There exists $\q\in\Spec(\OO_{K(\gamma)})$ for which $\q\cap \OO_K=\p$ and $v_{\q}(\gamma)<0$.
\end{itemize}
\end{customthm}
\begin{proof}
Suppose $\p$ does not satisfy the second condition, this means that each $v_{\q_i}(\gamma)\geq 0$ and hence each $\overline{v}(\gamma_j)\geq 0$. We see that for $1\leq t\leq n$,
$$\overline{v}\left(b_{n-t}\right) \geq\min_{1\leq j_1<\dots<j_t\leq n} \sum_{k=1}^{t}\overline{v}(\gamma_{j_k})
\geq 0.$$
Therefore $\p$ does not satisfy the first condition.

For the other direction, suppose $\p$ satisfies the second condition. This means that at least one $v_{\q_i}(\gamma)< 0$ and hence at least one $\overline{v}(\gamma_j)< 0$. Relabel the $\gamma_j$ such that $\overline{v}(\gamma_1),\dots,\overline{v}(\gamma_t)$ are negative ($t\geq 1$) and $\overline{v}(\gamma_{t+1}),\dots,\overline{v}(\gamma_n)$ are not negative. Then
$$\min\Big\{\sum_{k=1}^{t}\overline{v}(\gamma_{j_k})\mid 1\leq j_1<\dots<j_t\leq n\Big\}
=\sum_{k=1}^{t}\overline{v}(\gamma_{k}),$$
and it is the unique minimum element in the set. Therefore
$$\overline{v}\left(b_{n-t}\right)=\sum_{k=1}^{t}\overline{v}(\gamma_{k})<0.$$
This shows that $\p$ satisfies the first condition.
\end{proof}

\begin{customprop}{\ref{Determine OK[gamma]cap K}}
For $a,b\in \OO_K$, $b\neq 0$: we have $\frac{a}{b}\in \OO_K[\gamma]$ if and only if
$$\{\p\in\Spec(\OO_K)\mid v_{\p}(a)<v_{\p}(b)\}\subseteq X(K,\gamma).$$
\end{customprop}
\begin{proof}
Suppose we have $a, b\in \OO_K$, $b\neq 0$ with $\frac{a}{b} \in \okgm$. Consider some $\p\in\Spec(\OO_K)$ such that $v_\p(a)<v_\p(b)$. Suppose $(a)=\p^e I$ and $(b)=\p^k J$, where $e=v_{\p}(a)$, $k=v_{\p}(b)$ and $I+\p=(1)$. Let $h$ be the class number of $\OO_K$, so $\p^h$, $I^h$ and $J^h$ are all principal ideals. Say $\p^h=(\beta)$, $I^h=(\alpha_1)$ and $J^h=(\alpha_2)$. Therefore (after replacing $\alpha_1$ by a unit multiple) we have $\frac{a^h}{b^h}=\frac{\alpha_1}{\beta^r\alpha_2}\in\OO_K[\gamma]$, where $r=k-e \geq 1$. Since $\alpha_2 \in \OO_K$, we have that $\frac{\alpha_1}{\beta^r}\in \okgm$. Next $I+\p=(1)$ implies $(\alpha_1)+(\beta^r)=1$. This means there exist $x,y \in \OO_K$ such that $x\beta^r+y\alpha_1=1$.
So then $\frac{1}{\beta^r}=x+y\frac{\alpha_1}{\beta^r}\in \okgm$. Finally, $\beta^r \in \p$ and $\frac{1}{\beta^r}\in \okgm$, therefore $\p \in X(K,\gamma)$.

Conversely, suppose we have $a,b\in\OO_K$ such that $b\neq 0$ and
$$\{\p\in\Spec(\OO_K)\mid v_{\p}(a)<v_{\p}(b)\}\subseteq X(K,\gamma).$$
Let $\p_1, \p_2, \dots, \p_n$ be the list of primes such that $v_{\p_i}(b) >v_{\p_i}(a)$. Then since $\p_i \in X(K,\gamma),$ there are $\alpha_i \in \p_i $ such that $\frac{1}{\alpha_i} \in \okgm$.
We can pick a sufficiently large $N$ such that $v_\p(a\prod_i\alpha_i^{N}) \geq v_\p(b)$ for all $\p\in\Spec(\OO_K)$. Then $\frac{a\prod_i\alpha_i^{N}}{b} \in \OO_K$. Finally this implies
\[\frac{a}{b}=\frac{a\prod_i\alpha_i^{N}}{b}\frac{1}{\alpha_1^N}\dots\frac{1}{\alpha_n^N}\in\okgm.\qedhere\]
\end{proof}

\begin{customthm}{\ref{X=empty result}}
Given a number ring $\OO_K$ and an algebraic number $\gamma$:
\begin{enumerate}
    \item $X(K,\gamma)=\emptyset$ if and only if $\OO_K[\gamma]\cap K=\OO_K$.
    \item $Y(K,\gamma)=\emptyset$ if and only if $\gamma\in\overline{\Z}$.
\end{enumerate}
\end{customthm}
\begin{proof}
Note that $X(K,\gamma)=S_{K,1}(\gamma)$. Let $h$ be the class number of $\OO_K$.

First suppose $X(K,\gamma)=\emptyset$. Consider $\alpha\in\OO_K[\gamma]\cap K$. By Proposition \ref{Determine OK[gamma]cap K}, we know that
$$\{\p\in\Spec(\OO_K)\mid v_{\p}(\alpha)<0\}\subseteq X(K,\gamma).$$
This implies that $v_{\p}(\alpha)\geq0$ for all primes $\p\in\Spec(\OO_K)$. Thus $\alpha\in\OO_K$ and hence $\OO_K[\gamma]\cap K=\OO_K$.

Conversely, suppose $X(K,\gamma)\neq\emptyset$. Pick $\p \in X(K,\gamma)=S_{K,1}(\gamma)$. By the definition of $S_{K,1}(\gamma)$, there exists $ \alpha \in \p$ such that $\frac{1}{\alpha} \in \OO_K[\gamma]$. Then $\frac{1}{\alpha}$ is an element in $\OO_K[\gamma]\cap K$ that is not in $\OO_K$. We conclude that $\OO_K[\gamma]\cap K\neq \OO_K$.

Next, we prove the second part. Note that $\gamma\in\overline{\Z}$ if and only if $\gamma$ is integral over $\OO_K$. Moreover, this happens if and only if $f_{K,\gamma}(x)\in\OO_K[x]$. This is equivalent to the condition that for every $\p\in\Spec(\OO_K)$ and for every $i\in[0,n-1]$: $v_{\p}(b_i)\geq 0$. This is clearly equivalent to $Y(K,\gamma)=\emptyset$.
\end{proof}

\begin{proposition}\cite[Chapter 8, Section 3]{Neukirch 2}\label{class group}
Suppose $K$ is a number field and $X$ is a finite set of prime ideals of $\OO_K$. Consider
$$\OO_{K,X}=\{\alpha\in K\mid \text{for all } \p\in\Spec(\OO_K), \text{ if }v_{\p}(\alpha)<0,\text{ then }\p\in X\}.$$
Then $\OO_{K,X}$ is a Dedekind domain and its class group is
$$\Cl(\OO_{K,X})\cong \Cl(\OO_K)/\langle[\p]\mid \p\in X\rangle.$$
\end{proposition}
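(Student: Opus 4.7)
The plan is to realize $\OO_{K,X}$ as a localization of $\OO_K$ at an explicit multiplicative set, and then read off both the Dedekind property and the class group from standard localization machinery.

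First I would produce the localization description. Let $h$ be the class number of $\OO_K$, so that for each $\p\in X$ the ideal $\p^h$ is principal, say $\p^h=(\pi_\p)$. Let $T\subseteq \OO_K$ be the multiplicative set generated by $\{\pi_\p\mid\p\in X\}$. I claim $\OO_{K,X}=T^{-1}\OO_K$. The inclusion $T^{-1}\OO_K\subseteq \OO_{K,X}$ is immediate from $v_\p(\pi_{\p'})=0$ for $\p\notin X$. For the reverse inclusion, given $\alpha\in\OO_{K,X}$, every prime with $v_\p(\alpha)<0$ lies in $X$, so multiplying $\alpha$ by $\prod_{\p\in X}\pi_\p^{N}$ for sufficiently large $N$ clears all denominators and places $\alpha\cdot\prod_{\p\in X} \pi_\p^{N}$ in $\OO_K$. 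Since localization preserves the Noetherian, integrally-closed, and one-dimensional properties, $\OO_{K,X}$ is Dedekind. The usual correspondence identifies $\Spec(\OO_{K,X})\setminus\{0\}$ with $\Spec(\OO_K)\setminus X$ via $\p\mapsto \p\OO_{K,X}$, while $\p\OO_{K,X}=\OO_{K,X}$ for $\p\in X$.

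Next I would tackle the class group. Write $I(R)$ for the group of non-zero fractional ideals of a Dedekind domain $R$, and $P(R)\subseteq I(R)$ for its subgroup of principal fractional ideals. Extension of ideals defines a homomorphism $\varphi\colon I(\OO_K)\to I(\OO_{K,X})$, $\mathfrak{a}\mapsto \mathfrak{a}\OO_{K,X}$. By the prime correspondence and unique factorization, $\varphi$ is surjective with kernel exactly the free subgroup $\langle \p\mid \p\in X\rangle$. Moreover $\varphi$ carries $P(\OO_K)$ onto $P(\OO_{K,X})$: any $\alpha\OO_{K,X}$ with $\alpha\in K^{\times}$ is $\varphi(\alpha\OO_K)$. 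Hence $\varphi^{-1}(P(\OO_{K,X}))=P(\OO_K)\cdot\langle\p\mid\p\in X\rangle$, and the standard isomorphism theorem yields
$$\Cl(\OO_{K,X})\cong I(\OO_K)\big/\bigl(P(\OO_K)\cdot\langle\p\mid\p\in X\rangle\bigr)\cong \Cl(\OO_K)\big/\langle[\p]\mid\p\in X\rangle.$$

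The main obstacle is bookkeeping in two places: establishing the equality $T^{-1}\OO_K=\OO_{K,X}$ (where finiteness of $X$ and the class-number trick are essential for producing enough elements of $T$), and verifying that $\varphi(P(\OO_K))$ really fills out all of $P(\OO_{K,X})$ rather than only a proper subgroup. Once both are in place, the Dedekind property and the class-group formula follow formally from the third isomorphism theorem.
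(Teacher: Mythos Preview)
The paper does not supply its own proof of this proposition: it is quoted as a known result with the citation \cite[Chapter 8, Section 3]{Neukirch 2}, and is then immediately invoked to deduce Proposition~\ref{Class group OK[gamma]cap K}. So there is nothing in the paper to compare your argument against.

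That said, your proposal is a correct and standard proof. The two ingredients you flag as potential obstacles are both fine. For the equality $T^{-1}\OO_K=\OO_{K,X}$, the key point is that $(\pi_\p)=\p^h$ as ideals, so $v_{\p'}(\pi_\p)=0$ for every $\p'\neq\p$; this makes both inclusions go through exactly as you describe. For the surjectivity $\varphi(P(\OO_K))=P(\OO_{K,X})$, the observation that $\OO_K$ and $\OO_{K,X}$ share the fraction field $K$ settles it immediately, since every principal fractional ideal of $\OO_{K,X}$ is generated by some $\alpha\in K^{\times}$ and hence equals $\varphi(\alpha\OO_K)$. The computation $\varphi^{-1}(P(\OO_{K,X}))=P(\OO_K)\cdot\ker\varphi$ then follows formally, and the isomorphism theorem finishes.
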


\begin{customprop}{\ref{Class group OK[gamma]cap K}}
If $K$ is a number field and $\gamma\in\overline{\Q}$, then $\OO_K[\gamma]\cap K$ is a Dedekind domain and its class group is
$$\Cl(\OO_{K}[\gamma]\cap K)\cong \Cl(\OO_K)/\langle[\p]\mid \p\in X(K,\gamma)\rangle.$$
\end{customprop}
\begin{proof}
This follows from Proposition \ref{Determine OK[gamma]cap K} and Proposition \ref{class group}.
\end{proof}

\section{Varying the number field}\label{var num field}

In this Section we will study how $X(K,\gamma)$ and $Y(K,\gamma)$ change as we fix $\gamma$ and change $K$.

\begin{lemma}\label{Going up and down}
If $K\subseteq L$ are number fields then
$$X(K,\gamma)=\{\p\in\Spec(\OO_K)\mid \text{for every }\q\in\Spec(\OO_L), \text{ if } \q\cap\OO_K=\p \text{ then } \q\in X(L,\gamma)\},$$
$$X(L,\gamma)\supseteq\{\q\in\Spec(\OO_L)\mid \q\cap\OO_K\in X(K,\gamma)\}.$$
Moreover if $L\cap K(\gamma)=K$, then we have
$$X(L,\gamma)=\{\q\in\Spec(\OO_L)\mid \q\cap\OO_K\in X(K,\gamma)\}.$$
\end{lemma}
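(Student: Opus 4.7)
The plan is to prove the three assertions in order, using throughout the equivalent characterizations of membership in $X$ established in Theorem~\ref{X cond equiv}, and in particular the third one: $\p \in X(K,\gamma)$ iff every prime of $\OO_{K(\gamma)}$ above $\p$ has negative $\gamma$-valuation.

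For the first identity, I unpack both sides through this characterization. For the forward direction, take $\p \in X(K,\gamma)$ and any $\q \in \Spec(\OO_L)$ above $\p$; then for any $\mathfrak{R} \in \Spec(\OO_{L(\gamma)})$ above $\q$, the restriction $\mathfrak{R} \cap \OO_{K(\gamma)}$ is a prime of $K(\gamma)$ above $\p$, so $v_{\mathfrak{R} \cap \OO_{K(\gamma)}}(\gamma) < 0$ by hypothesis, and the ramification identity $v_{\mathfrak{R}}(\gamma) = e(\mathfrak{R} \mid \mathfrak{R} \cap \OO_{K(\gamma)}) \cdot v_{\mathfrak{R} \cap \OO_{K(\gamma)}}(\gamma)$ forces $v_{\mathfrak{R}}(\gamma) < 0$. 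Hence $\q \in X(L,\gamma)$. The converse is dual: suppose every prime $\q$ of $L$ above $\p$ lies in $X(L,\gamma)$. Given $\q'' \in \Spec(\OO_{K(\gamma)})$ above $\p$, use going-up in $\OO_{L(\gamma)}$ to produce an $\mathfrak{R}$ above $\q''$; setting $\q = \mathfrak{R} \cap \OO_L$, the assumption gives $v_{\mathfrak{R}}(\gamma) < 0$, and then $v_{\q''}(\gamma) = v_{\mathfrak{R}}(\gamma)/e(\mathfrak{R} \mid \q'') < 0$.

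Claim (2) is an immediate consequence of the forward direction of (1): if $\q \cap \OO_K = \p$ lies in $X(K,\gamma)$, then every prime of $L$ above $\p$, including $\q$, is in $X(L,\gamma)$.

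For (3), the containment $\supseteq$ is (2). The reverse inclusion is where the hypothesis $L \cap K(\gamma) = K$ must intervene. Suppose $\q \in X(L,\gamma)$ and $\p = \q \cap \OO_K$; I want $\p \in X(K,\gamma)$, equivalently (characterization~3) every $\q'' \in \Spec(\OO_{K(\gamma)})$ above $\p$ satisfies $v_{\q''}(\gamma)<0$. The natural strategy is to find, for each such $\q''$, a prime $\mathfrak{R}$ of $L(\gamma)$ lying above \emph{both} $\q$ and $\q''$; once produced, $v_{\mathfrak{R}}(\gamma)<0$ from $\q \in X(L,\gamma)$ descends to $v_{\q''}(\gamma)<0$ via the ramification identity. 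The main obstacle is the existence of such $\mathfrak{R}$, since in general the map from primes of $\OO_{L(\gamma)}$ above $\p$ to pairs (prime of $L$ above $\p$, prime of $K(\gamma)$ above $\p$) need not be surjective. I would exploit $L \cap K(\gamma) = K$ by passing to completions: this hypothesis should force $L_{\q} \otimes_{K_\p} K(\gamma)_{\q''}$ to be nonzero for every pair $(\q, \q'')$ above $\p$, and any local component of that nonzero tensor product yields the desired $\mathfrak{R}$.
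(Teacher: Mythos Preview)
Your arguments for the first identity and for the containment~(2) coincide with the paper's: both proceed via the third characterization of Theorem~\ref{X cond equiv}, chasing primes around the square $K\subseteq L$, $K(\gamma)\subseteq L(\gamma)$ and using $v_{\mathfrak R}=e(\mathfrak R\mid \q'')\,v_{\q''}$.

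For the final assertion the approaches diverge, and yours has a real gap. You need, for each $\q''\in\Spec(\OO_{K(\gamma)})$ above $\p$, a prime $\mathfrak R$ of $\OO_{L(\gamma)}$ lying over \emph{both} $\q$ and $\q''$, and you propose to extract $\mathfrak R$ from a local component of $L_{\q}\otimes_{K_{\p}}K(\gamma)_{\q''}$. That tensor product is of course always nonzero (it is a tensor of nonzero $K_{\p}$-vector spaces, so the hypothesis $L\cap K(\gamma)=K$ plays no role there), but its simple factors correspond to primes of $L(\gamma)$ only when the surjection $L\otimes_K K(\gamma)\twoheadrightarrow L(\gamma)$ is an isomorphism, i.e.\ when $L$ and $K(\gamma)$ are \emph{linearly disjoint} over $K$. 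The condition $L\cap K(\gamma)=K$ is strictly weaker: take $K=\Q$, let $\gamma$ be a root of an irreducible cubic with Galois group $S_3$, and set $L=\Q(\gamma')$ for a different conjugate $\gamma'$. Then $L\cap\Q(\gamma)=\Q$, yet $[L(\gamma):\Q]=6<9$, so the map
\[
\prod_{\q,\q''}L_{\q}\otimes_{K_{\p}}K(\gamma)_{\q''}\ \longrightarrow\ \prod_{\mathfrak R\mid\p}L(\gamma)_{\mathfrak R}
\]
annihilates some factors entirely and not every pair $(\q,\q'')$ is hit by an $\mathfrak R$.

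The paper avoids this issue by abandoning the third characterization here and switching to the first one in Theorem~\ref{X cond equiv}: from $L\cap K(\gamma)=K$ it asserts $f_{L,\gamma}=f_{K,\gamma}$, so the coefficients $b_i$ lie in $K$, and the inequalities $v_{\q}(b_i)>v_{\q}(b_0)$ descend at once to $v_{\p}(b_i)>v_{\p}(b_0)$ via $v_{\p}=e(\q\mid\p)^{-1}v_{\q}$. You should be aware, though, that the implication ``$L\cap K(\gamma)=K\Rightarrow f_{L,\gamma}=f_{K,\gamma}$'' is exactly the statement $[L(\gamma):L]=[K(\gamma):K]$, which is again linear disjointness; the same cubic example above has $f_{L,\gamma}$ of degree~$2$. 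So the paper's argument and yours founder on the same distinction between $L\cap K(\gamma)=K$ and genuine linear disjointness.
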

\begin{proof}
First, suppose $\p$ is a prime of $\OO_K$, such that every prime of $\OO_L$ above $\p$ lies in $X(L,\gamma)$. We will show that $\p\in X(K,\gamma)$.
\begin{center}
\begin{tikzpicture}
\node (1) at (0,0) {$K$};
\node (2) at (-1,0.75) {$L$};
\node (3) at (1,0.75) {$K(\gamma)$};
\node (4) at (0,1.5) {$L(\gamma)$};
\node (5) at (4,0) {$\p$};
\node (6) at (3,0.75) {$\q$};
\node (7) at (5,0.75) {$\p'$};
\node (8) at (4,1.5) {$\q'$};
\draw [black] (1) -- (2);
\draw [black] (1) -- (3);
\draw [black] (4) -- (2);
\draw [black] (4) -- (3);
\draw [black] (5) -- (6);
\draw [black] (5) -- (7);
\draw [black] (8) -- (6);
\draw [black] (8) -- (7);
\end{tikzpicture}    
\end{center}
Let $\p'$ be a prime of $\OO_{K(\gamma)}$ that is above $\p$. Let $\q'$ be a prime of $\OO_{L(\gamma)}$ that is above $\p'$. Let $\q=\q'\cap \OO_L$. Then $\q$ lies above $\p$ and hence $\q\in X(L,\gamma)$. This implies that $v_{\q'}(\gamma)<0$. Hence $v_{\p'}(\gamma)=\frac{1}{e(\q'|\p')}v_{\q'}(\gamma)<0$. Since this holds for every prime $\p'$ of $\OO_{K(\gamma)}$ that is above $\p$, we see that $\p\in X(K,\gamma)$. We have shown that
$$X(K,\gamma)\supseteq\{\p\in\Spec(\OO_K)\mid \text{for every }\q\in\Spec(\OO_L), \text{ if } \q\cap\OO_K=\p \text{ then } \q\in X(L,\gamma)\}.$$

Next, consider a prime $\p\in X(K,\gamma)$. Let $\q$ be a prime of $\OO_L$ above $\p$. We will show that $\q\in X(L,\gamma)$. Consider a prime $\q'$ of $\OO_{L(\gamma)}$ above $\q$. Let $\p'=\q'\cap \OO_{K(\gamma)}$. Then $\p$ is in $X(K,\gamma)$ and $\p'$ is a prime of $\OO_{K(\gamma)}$ above $\p$, therefore $v_{\p'}(\gamma)<0$. Next $v_{\q'}(\gamma)=e(\q'|\p')v_{\p'}(\gamma)<0$. Since this holds for every prime $\q'$ of $\OO_{L(\gamma)}$ above $\q$, we conclude that $\q\in X(L,\gamma)$. This shows that
$$X(K,\gamma)\subseteq\{\p\in\Spec(\OO_K)\mid \text{for every }\q\in\Spec(\OO_L), \text{ if } \q\cap\OO_K=\p \text{ then } \q\in X(L,\gamma)\}.$$
We can conclude that
$$X(K,\gamma)=\{\p\in\Spec(\OO_K)\mid \text{for every }\q\in\Spec(\OO_L), \text{ if } \q\cap\OO_K=\p \text{ then } \q\in X(L,\gamma)\}.$$

Next, let $\q$ be a prime of $\OO_L$ for which $\q\cap\OO_K\in X(K,\gamma)$. Let $\p=\q\cap \OO_K$. Then $\p$ is in $X(K,\gamma)$ and $\q$ is a prime of $\OO_L$ above $\p$. Therefore, from the previous part we see that $\q\in X(L,\gamma)$. This shows that
$$X(L,\gamma)\supseteq\{\q\in\Spec(\OO_L)\mid \q\cap\OO_K\in X(K,\gamma)\}.$$

For the last part, suppose $L\cap K(\gamma)=K$. This means that $$f_{L,\gamma}(x)=f_{K,\gamma}(x)=b_nx^n+\dots+b_0\in K(x).$$
Consider $\q\in X(L,\gamma)$. We know that for each $i\in [1,n]$, we have $v_{\q}(b_i)>v_{\q}(b_0)$. Let $\p=\q\cap \OO_K$. Then
$$v_{\p}(b_i)=\frac{1}{e(\q|\p)}v_{\q}(b_i)>\frac{1}{e(\q|\p)}v_{\q}(b_0)=v_{\p}(b_0).$$
This implies that $\p=\q\cap\OO_K\in X(K,\gamma)$. We conclude that
\[X(L,\gamma)=\{\q\in\Spec(\OO_L)\mid \q\cap\OO_K\in X(K,\gamma)\}.\qedhere\]
\end{proof}

Note that if $L\cap K(\gamma)\neq K$, then it is possible that
$$X(L,\gamma)\supsetneq\{\q\in\Spec(\OO_L)\mid \q\cap\OO_K\in X(K,\gamma)\}.$$
For example, consider $K=\Q$, $L=\Q[i]$ and $\gamma=\frac{1}{2+i}$. Then $X(L,\gamma)=\{(2+i)\}$, but $X(K,\gamma)=\emptyset$ so $\{\q\in\Spec(\OO_L)\mid \q\cap\OO_K\in X(K,\gamma)\}=\emptyset$.

\begin{lemma}\label{Y going up down}
If $K\subseteq L$ are number fields then
$$Y(K,\gamma)=\{\p\in\Spec(\OO_K)\mid \text{there exists }\q\in\Spec(\OO_L)\text{ above } \p \text{ such that } \q\in Y(L,\gamma)\},$$
$$Y(L,\gamma)\subseteq \{\q\in\Spec(\OO_L)\mid \q\cap\OO_K\in Y(K,\gamma)\}.$$
Moreover if $L\cap K(\gamma)=K$, then we have
$$Y(L,\gamma)= \{\q\in\Spec(\OO_L)\mid \q\cap\OO_K\in Y(K,\gamma)\}.$$
\end{lemma}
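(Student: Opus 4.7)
The plan is to mirror the proof of Lemma \ref{Going up and down}, with Theorem \ref{Y cond equiv} playing the role of Theorem \ref{X cond equiv}. The only structural difference between the two cases is that $Y(K,\gamma)$ is defined by an existential quantifier over primes of $\OO_{K(\gamma)}$ above $\p$, whereas $X(K,\gamma)$ uses a universal one; every quantifier chase therefore inverts, but the tower of fields $K\subseteq L$, $K(\gamma)\subseteq L(\gamma)$ and the multiplicativity of valuations in towers are all used in exactly the same way.

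For the first equality I would argue both inclusions via a diagram chase. For $(\subseteq)$: given $\p\in Y(K,\gamma)$, choose $\p'\in\Spec(\OO_{K(\gamma)})$ above $\p$ with $v_{\p'}(\gamma)<0$, pick any $\q'\in\Spec(\OO_{L(\gamma)})$ above $\p'$, and set $\q=\q'\cap\OO_L$; then $v_{\q'}(\gamma)=e(\q'|\p')v_{\p'}(\gamma)<0$, so $\q\in Y(L,\gamma)$ while $\q$ lies above $\p$. For $(\supseteq)$: given $\q\in Y(L,\gamma)$ above $\p$, pick $\q'\in\Spec(\OO_{L(\gamma)})$ above $\q$ with $v_{\q'}(\gamma)<0$ and let $\p'=\q'\cap\OO_{K(\gamma)}$; the same relation $v_{\q'}(\gamma)=e(\q'|\p')v_{\p'}(\gamma)$ forces $v_{\p'}(\gamma)<0$, and $\p'$ lies above $\p$. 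The general inclusion $Y(L,\gamma)\subseteq\{\q\in\Spec(\OO_L)\mid \q\cap\OO_K\in Y(K,\gamma)\}$ is then immediate: if $\q\in Y(L,\gamma)$ and $\p=\q\cap\OO_K$, then $\q$ itself witnesses that some prime above $\p$ lies in $Y(L,\gamma)$, so $\p\in Y(K,\gamma)$ by the equality just proved.

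The hypothesis $L\cap K(\gamma)=K$ is needed only to close the reverse inclusion in the last display. Under this hypothesis $f_{L,\gamma}=f_{K,\gamma}=b_nx^n+\dots+b_0$, so every coefficient $b_i$ lies in $K$. Given $\q\in\Spec(\OO_L)$ with $\p=\q\cap\OO_K\in Y(K,\gamma)$, Theorem \ref{Y cond equiv} supplies $i\in[0,n-1]$ with $v_{\p}(b_i)<0$; then $v_{\q}(b_i)=e(\q|\p)v_{\p}(b_i)<0$ since $b_i\in K$, and Theorem \ref{Y cond equiv} applied over $L$ yields $\q\in Y(L,\gamma)$. I do not expect a real obstacle: the argument is a routine mirror of the $X$-case, and the only delicate point is recognizing that the final step genuinely requires $f_{L,\gamma}=f_{K,\gamma}$ — without it the $b_i$ could lie in $L\setminus K$ and the pullback of valuations from $\q$ to $\p$ would fail.
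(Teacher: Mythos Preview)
Your proposal is correct and follows essentially the same approach as the paper: the paper's proof performs exactly the diagram chases you describe through the tower $K\subseteq L$, $K(\gamma)\subseteq L(\gamma)$ for the first equality, deduces the second inclusion from it, and for the final equality uses $L\cap K(\gamma)=K$ to get $f_{L,\gamma}=f_{K,\gamma}$ and then pushes the inequality $v_{\p}(b_i)<0$ up to $\q$ via $v_{\q}(b_i)=e(\q|\p)v_{\p}(b_i)$. The only cosmetic difference is the order in which the two inclusions of the first equality are handled.
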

\begin{proof}
First, suppose $\p\in\Spec(\OO_K)$ for which there is a prime $\q\in\Spec(\OO_L)$ above $\p$ such that $\q\in Y(L,\gamma)$. We will show that $\p\in Y(K,\gamma)$.
\begin{center}
\begin{tikzpicture}
\node (1) at (0,0) {$K$};
\node (2) at (-1,0.75) {$L$};
\node (3) at (1,0.75) {$K(\gamma)$};
\node (4) at (0,1.5) {$L(\gamma)$};
\node (5) at (4,0) {$\p$};
\node (6) at (3,0.75) {$\q$};
\node (7) at (5,0.75) {$\p'$};
\node (8) at (4,1.5) {$\q'$};
\draw [black] (1) -- (2);
\draw [black] (1) -- (3);
\draw [black] (4) -- (2);
\draw [black] (4) -- (3);
\draw [black] (5) -- (6);
\draw [black] (5) -- (7);
\draw [black] (8) -- (6);
\draw [black] (8) -- (7);
\end{tikzpicture}    
\end{center}
Since $\q\in Y(L,\gamma)$, there is a prime $\q'$ of $\OO_{L(\gamma)}$ above $\q$ for which $v_{\q'}(\gamma)<0$. Let $\p'=\q'\cap \OO_{K(\gamma)}$. Then $\p'$ is a prime of $\OO_{K(\gamma)}$ that is above $\p$. Moreover, $v_{\p'}(\gamma)=\frac{1}{e(\q'|\p')}v_{\q'}(\gamma)<0$. We see that $\p\in Y(K,\gamma)$. We have shown that
$$Y(K,\gamma)\supseteq\{\p\in\Spec(\OO_K)\mid \text{there exists }\q\in\Spec(\OO_L)\text{ above } \p \text{ such that } \q\in Y(L,\gamma)\}.$$

Next, consider a prime $\p\in Y(K,\gamma)$. This means there is a prime $\p'$ of $\OO_{K(\gamma)}$ above $\p$ for which $v_{\p'}(\gamma)<0$. Let $\q'$ be a prime of $\OO_{L(\gamma)}$ above $\p'$. Then $v_{\q'}(\gamma)=e(\q'|\p')v_{\p'}(\gamma)<0$. Let $\q=\q'\cap \OO_L$. Now $\q'$ is a prime of $\OO_{L(\gamma)}$ above $\q$ for which $v_{\q'}(\gamma)<0$. This means that $\q\in Y(L,\gamma)$. Moreover, $\q$ is a prime of $\OO_L$ above $\p$. We have shown that
$$Y(K,\gamma)\subseteq\{\p\in\Spec(\OO_K)\mid \text{there exists }\q\in\Spec(\OO_L)\text{ above } \p \text{ such that } \q\in Y(L,\gamma)\}.$$

Next, consider a prime $\q\in Y(L,\gamma)$. Let $\p=\q\cap \OO_K$. Then $\q$ is a prime of $\OO_L$ above $\p$ and $\q$ is in $Y(K,\gamma)$. Therefore from the previous part $\p\in Y(K,\gamma)$. This shows that
$$Y(L,\gamma)\subseteq\{\q\in\Spec(\OO_L)\mid \q\cap\OO_K\in Y(K,\gamma)\}.$$

For the last part, suppose $L\cap K(\gamma)=K$. This means that $$f_{L,\gamma}(x)=f_{K,\gamma}(x)=x^n+b_{n-1}x^{n-1}+\dots+b_0\in K(x).$$
Consider $\q\in \Spec(\OO_L)$ for which $\p=\q\cap\OO_K\in Y(K,\gamma)$. We know that there is some $i\in [0,n-1]$, for which $v_{\p}(b_i)<0$. Then $v_{\q}(b_i)=e(\q|\p)v_{\p}(b_i)<0$.
This implies that $\q\in Y(K,\gamma)$. We conclude that
\[Y(L,\gamma)=\{\q\in\Spec(\OO_L)\mid \q\cap\OO_K\in Y(K,\gamma)\}.\qedhere\]
\end{proof}
Note that if $L\cap K(\gamma)\neq K$, then it is possible that
$$Y(L,\gamma)\subsetneq\{\q\in\Spec(\OO_L)\mid \q\cap\OO_K\in Y(K,\gamma)\}.$$
For example, consider $K=\Q$, $L=\Q[i]$ and $\gamma=\frac{1}{2+i}$. Then $Y(L,\gamma)=\{(2+i)\}$, but $Y(K,\gamma)=\{(5)\}$ so $\{\q\in\Spec(\OO_L)\mid \q\cap\OO_K\in Y(K,\gamma)\}=\{(2+i),(2-i)\}$.

\begin{proposition}
If $K \subseteq L$ are number fields, then $$\OO_L[\gamma]\cap K=\OO_K[\gamma]\cap K.$$
\end{proposition}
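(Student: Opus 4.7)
The plan is to chase the characterization of $\OO_K[\gamma]\cap K$ given by Proposition~\ref{Determine OK[gamma]cap K} together with the going-up/going-down behaviour of $X$ in Lemma~\ref{Going up and down}. The inclusion $\OO_K[\gamma]\cap K\subseteq \OO_L[\gamma]\cap K$ is immediate from $\OO_K\subseteq \OO_L$, so the content is the reverse inclusion.

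To prove that reverse inclusion, I would take an arbitrary $\alpha\in \OO_L[\gamma]\cap K$ and show that $\alpha\in\OO_K[\gamma]$. Since $\alpha\in K\subseteq L$, we have $\alpha\in\OO_L[\gamma]\cap L$, so Proposition~\ref{Determine OK[gamma]cap K} (applied over $L$) tells us that
\[\{\q\in\Spec(\OO_L)\mid v_{\q}(\alpha)<0\}\subseteq X(L,\gamma).\]
Now pick any $\p\in\Spec(\OO_K)$ with $v_{\p}(\alpha)<0$. For every prime $\q\in\Spec(\OO_L)$ lying above $\p$ one has $v_{\q}(\alpha)=e(\q|\p)v_{\p}(\alpha)<0$, so $\q\in X(L,\gamma)$. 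Since this holds for \emph{every} such $\q$, the first equality of Lemma~\ref{Going up and down} forces $\p\in X(K,\gamma)$.

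This shows $\{\p\in\Spec(\OO_K)\mid v_{\p}(\alpha)<0\}\subseteq X(K,\gamma)$, and so by Proposition~\ref{Determine OK[gamma]cap K} applied over $K$ (writing $\alpha=a/b$ with $a,b\in\OO_K$) we conclude $\alpha\in\OO_K[\gamma]\cap K$, finishing the proof. I don't foresee a real obstacle here: the whole argument is a straightforward combination of Proposition~\ref{Determine OK[gamma]cap K} with the $K\subseteq L$ description of $X(K,\gamma)$ in Lemma~\ref{Going up and down}; the only point to be careful about is using the ``for every $\q$ above $\p$'' form of that lemma (rather than the easier ``there exists'' form used for $Y$), which is exactly what is available since $v_{\p}(\alpha)<0$ implies negative valuation at \emph{all} primes of $\OO_L$ over $\p$.
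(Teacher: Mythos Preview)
Your proof is correct and follows essentially the same approach as the paper: both apply Proposition~\ref{Determine OK[gamma]cap K} over $L$ to an element of $\OO_L[\gamma]\cap K$, observe that $v_{\p}(\alpha)<0$ forces $v_{\q}(\alpha)<0$ for every $\q$ above $\p$, invoke the ``for every $\q$'' clause of Lemma~\ref{Going up and down} to get $\p\in X(K,\gamma)$, and then apply Proposition~\ref{Determine OK[gamma]cap K} over $K$. The only cosmetic difference is that the paper writes the element as $a/b$ with $a,b\in\OO_K$ and phrases the valuation condition as $v_{\p}(b)>v_{\p}(a)$ rather than $v_{\p}(\alpha)<0$.
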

\begin{proof}
Clearly $\okgm\cap K\subseteq \OO_L[\gamma]\cap K$. Suppose we have $\frac{a}{b} \in \OO_L[\gamma]\cap K$, $a, b \in \OO_K$. Since $\frac{a}{b} \in \OO_L[\gamma]$, by Proposition \ref{Determine OK[gamma]cap K} we know that whenever $\q \in \Spec(\OO_L)$ satisfies $v_\q(b)>v_\q(a)$, then $\q \in X(L,\gamma).$ Now suppose we have $\p \in \Spec(\OO_K)$ such that $v_\p(b)>v_\p(a)$. Then for any $\q \in \Spec(\OO_L)$ that is above $\p$, we also have $v_\q(b)>v_\q(a)$, which implies that $\q \in X(L,\gamma)$. Then by Lemma $\ref{Going up and down}$, we see that $\p \in X(K,\gamma)$. Therefore, $\frac{a}{b} \in \OO_K[\gamma]$ by Proposition $\ref{Determine OK[gamma]cap K}$.
\end{proof}

If two algebraic numbers $\gamma_1,\gamma_2$ satisfy $\OO_K[\gamma_1]\cap K=\OO_K$ and $\OO_K[\gamma_2]\cap K=\OO_K$, then we cannot always conclude that $\OO_K[\gamma_1,\gamma_2]\cap K=\OO_K$. For example, take $\gamma_1=\frac{1}{2+i}$, $\gamma_2=\frac{1}{2-i}$ and $K=\Q$. However, we will show that if $K(\gamma_1)\cap K(\gamma_2)=K$, then we can conclude that $\OO_K[\gamma_1,\gamma_2]\cap K=\OO_K$.

\begin{proposition}
Suppose we are given $\gamma_1,\dots,\gamma_n\in\overline{\Q}$ such that for every $1\leq m\leq n-1$ and for every $m+1\leq j\leq n$:
$$K(\gamma_1,\dots,\gamma_{m-1},\gamma_m)\cap K(\gamma_1,\dots,\gamma_{m-1},\gamma_j)=K(\gamma_1,\dots,\gamma_{m-1}).$$
Suppose moreover that for each $1\leq i\leq n$ we have $\OO_K[\gamma_i]\cap K=\OO_K$, then we have
$$\OO_K[\gamma_1,\dots,\gamma_n]\cap K=\OO_K.$$
\end{proposition}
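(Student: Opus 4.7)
The strategy is induction on $n$, with the main work encapsulated in an auxiliary claim at the level of primes. Write $L_m:=K(\gamma_1,\ldots,\gamma_m)$, so $L_0=K$. I will prove by induction on $n$ the following statement: for every $\p\in\Spec(\OO_K)$ there exists a prime $\q\in\Spec(\OO_{L_n})$ above $\p$ satisfying $v_\q(\gamma_i)\ge 0$ for all $i\in[1,n]$. Once this is available, the proposition is immediate: if $\alpha\in\OO_K[\gamma_1,\ldots,\gamma_n]\cap K$, write $\alpha$ as a polynomial in the $\gamma_i$ with coefficients in $\OO_K$; for any $\p$ and the associated $\q$, we obtain $v_\q(\alpha)\ge 0$, and because $\alpha\in K$ this forces $v_\p(\alpha)=v_\q(\alpha)/e(\q|\p)\ge 0$. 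Since $\p$ was arbitrary, $\alpha\in\OO_K$.

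The case $n=1$ of the claim is simply the assertion $X(K,\gamma_1)=\emptyset$: the hypothesis $\OO_K[\gamma_1]\cap K=\OO_K$ gives this by Theorem~\ref{X=empty result}, and then Theorem~\ref{X cond equiv} supplies the required prime of $\OO_{K(\gamma_1)}$. For the inductive step, the chain and nonemptiness hypotheses restrict cleanly to the initial segment $\gamma_1,\ldots,\gamma_{n-1}$, so the inductive hypothesis yields a prime $\q_0\in\Spec(\OO_{L_{n-1}})$ above $\p$ with $v_{\q_0}(\gamma_i)\ge 0$ for every $i<n$. It now suffices to produce a prime $\q\in\Spec(\OO_{L_n})=\Spec(\OO_{L_{n-1}(\gamma_n)})$ above $\q_0$ with $v_\q(\gamma_n)\ge 0$; the inequalities for $i<n$ transfer to $\q$ automatically via $v_\q|_{L_{n-1}}=e(\q|\q_0)v_{\q_0}$ (since $\gamma_i\in L_{n-1}$ for $i<n$). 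The existence of such a $\q$ is equivalent to $\q_0\notin X(L_{n-1},\gamma_n)$, so it will be enough to prove the stronger statement $X(L_{n-1},\gamma_n)=\emptyset$.

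The crux of the argument, and the only delicate step, is the intersection identity $L_{n-1}\cap K(\gamma_n)=K$: granted this, Lemma~\ref{Going up and down} applied to the tower $K\subseteq L_{n-1}$ with the element $\gamma_n$ transports the hypothesis $X(K,\gamma_n)=\emptyset$ to $X(L_{n-1},\gamma_n)=\emptyset$. To establish the identity I use a descent along the chain hypothesis with $j=n$: setting $F:=L_{n-1}\cap K(\gamma_n)$ and observing $K(\gamma_n)\subseteq L_{m-1}(\gamma_n)$ for every $m$, the hypothesis $L_m\cap L_{m-1}(\gamma_n)=L_{m-1}$ applied at $m=n-1$ gives $F\subseteq L_{n-1}\cap L_{n-2}(\gamma_n)=L_{n-2}$; applied at $m=n-2$ it then gives $F\subseteq L_{n-2}\cap L_{n-3}(\gamma_n)=L_{n-3}$; iterating down to $m=1$ yields $F\subseteq L_0=K$. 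This closes the induction, proving the claim and hence the proposition.
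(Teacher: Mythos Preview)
Your proof is correct and follows essentially the same approach as the paper: use the intersection hypotheses together with Lemma~\ref{Going up and down} to show $X(L_{n-1},\gamma_n)=\emptyset$, then inductively build a prime of $\OO_{L_n}$ above $\p$ at which every $\gamma_i$ has nonnegative valuation. The only organizational difference is that you collapse the tower by first proving the field identity $L_{n-1}\cap K(\gamma_n)=K$ via descent and then invoking Lemma~\ref{Going up and down} once, whereas the paper applies Lemma~\ref{Going up and down} at each step of the tower to propagate $X(L_{k-1},\gamma_i)=\emptyset$; the content is the same.
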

\begin{proof}
We start by showing that $X(K(\gamma_1,\dots,\gamma_{k-1}),\gamma_{i})=\emptyset$ for all $1\leq k\leq n$ and $i \geq k$.
We will induct on $k$. The base case $k=1$ follows from Theorem \ref{X=empty result} and the assumption that each $\OO_K[\gamma_i]\cap K=\OO_K$. 
Now assume that this is true for $k-1$, that is, for each $i\geq k-1$, we have $X(K(\gamma_1,\dots,\gamma_{k-2}),\gamma_{i})=\emptyset$.
Since we have $$K(\gamma_1,\dots,\gamma_{k-2},\gamma_{k-1}) \cap K(\gamma_1,\dots,\gamma_{k-2},\gamma_{i})=K(\gamma_1,\dots,\gamma_{k-2}),$$ it follows from Lemma $\ref{Going up and down}$ that
\begin{equation*}
    \begin{split}
    &X(K(\gamma_1,\dots,\gamma_{k-1}),\gamma_{i})\\    
    =\{\q \in \Spec(\OO_{K(\gamma_1,\dots,\gamma_{k-1})})\mid \q \cap K(\gamma_1,\dots,\gamma_{k-2}) \in X(K(\gamma_1,\dots,\gamma_{k-2}),\gamma_{i})\}.
    \end{split}
\end{equation*}
Therefore, $X(K(\gamma_1,\dots,\gamma_{k-1}),\gamma_{i})=\emptyset$. This finishes the inductive step. In particular, we have shown that $X(K(\gamma_1,\dots,\gamma_{k-1}),\gamma_k)=\emptyset$ for each $1 \leq k \leq n$.

Now we will show that $\OO_K[\gamma_1,\dots,\gamma_n]\cap K\neq\OO_K$.
Assume for the sake of contradiction that $\OO_K[\gamma_1,\dots,\gamma_n]\cap K\neq\OO_K$. This means there is some $\beta\in \OO_K[\gamma_1,\dots,\gamma_n]\cap K$ for which $\beta\notin \OO_K$. Then there must be a prime $\p\in\Spec(\OO_K)$ for which $v_{\p}(\beta)<0$. Since $K(\beta)=K$, Theorem \ref{X cond equiv} implies that $\p\in X(K,\beta)$ and hence there is $\alpha\in \p$ for which $\frac{1}{\alpha}\in \OO_K[\beta]\subseteq \OO_K[\gamma_1,\dots,\gamma_n]\cap K$.

Next, we will show by induction on $k$ that there exist $\p_k \in \Spec(\OO_{K(\gamma_1,\dots,\gamma_k)})$, such that $\p_k\cap K=\p$ and $v_{\p_k}(\gamma_1),\dots,v_{\p_k}(\gamma_k)$ are all non-negative.
We start with the base case $k=1$. Since $X(K,\gamma_1)=\emptyset$, $\p \not \in X(K,\gamma_1)$. Therefore, there exists $\p_1 \in \Spec(\OO_{K(\gamma_1)})$ such that $\p_1\cap K=\p$ and $v_{\p_1}(\gamma_1)\geq 0$.
Now suppose we have constructed $\p_{k-1}$. Since $X(K(\gamma_1,\dots,\gamma_{k-1}),\gamma_k)=\emptyset$, $\p_{k-1} \not \in X(K(\gamma_1,\dots,\gamma_{k-1}),\gamma_{k})$. Therefore, there exists $\p_k \in\Spec( \OO_{K(\gamma_1,\dots,\gamma_{k})})$ such that $\p_k \cap K(\gamma_1,\dots,\gamma_{k-1})=\p_{k-1}$, and $v_{\p_k}(\gamma_{k}) \geq 0$. But since $\p_k$ is above $\p_{k-1}$ and $v_{\p_{k-1}}(\gamma_1),\dots,v_{\p_{k-1}}(\gamma_{k-1}) \geq 0$, we have that $v_{\p_k}(\gamma_1),\dots,v_{\p_k}(\gamma_{k-1}) \geq 0$. This finishes the inductive step.

In particular, if we set $k=n$, this shows that there exists $\p_n\in\Spec(\OO_{K(\gamma_1,\dots,\gamma_n)})$ such that $\p_n$ is above $\p$ and $v_{\p_n}(\gamma_i) \geq 0$ for $1 \leq i \leq n$. Since $\frac{1}{\alpha}\in \OO_K[\gamma_1,\dots,\gamma_n]$, we must have that $v_{\p_n}(\frac{1}{\alpha}) \geq 0$, so $v_{\p_n}(\alpha) \leq 0$. This contradicts the fact that $\alpha \in \p$ and $\p=\p_n \cap K$. Therefore, $\OO_K[\gamma_1,\dots,\gamma_n]\cap K=\OO_K$.
\end{proof}
\begin{corollary}
If $\OO_K[\gamma_1]\cap K=\OO_K[\gamma_2]\cap K=\OO_K$ and $K(\gamma_1)\cap K(\gamma_2)=K$ then
$$\OO_K[\gamma_1,\gamma_2]\cap K= \OO_K.$$
\end{corollary}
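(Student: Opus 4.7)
The plan is to observe that this corollary is exactly the $n=2$ specialization of the preceding proposition, so the proof amounts to checking that the two hypotheses of that proposition hold in this case and then invoking it.

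First I would unpack the intersection condition from the proposition when $n=2$. The quantified statement "for every $1\leq m\leq n-1$ and every $m+1\leq j\leq n$" collapses to the single instance $m=1,\, j=2$, and in that case the chain $K(\gamma_1,\dots,\gamma_{m-1},\gamma_m) \cap K(\gamma_1,\dots,\gamma_{m-1},\gamma_j) = K(\gamma_1,\dots,\gamma_{m-1})$ becomes simply $K(\gamma_1)\cap K(\gamma_2)=K$, which is precisely the hypothesis of the corollary. Likewise, the hypothesis "for each $1\leq i\leq n$, $\OO_K[\gamma_i]\cap K=\OO_K$" becomes the given pair of equalities $\OO_K[\gamma_1]\cap K = \OO_K = \OO_K[\gamma_2]\cap K$.

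With both hypotheses verified, the preceding proposition applies directly and yields $\OO_K[\gamma_1,\gamma_2]\cap K=\OO_K$, which is the desired conclusion. There is no real obstacle here — the entire content of the corollary is that the abstract recursive compatibility condition in the proposition simplifies to a clean, symmetric hypothesis in the two-variable case, which makes it worth stating separately as a corollary even though its proof is a one-line invocation. So my proof would consist of a single sentence: "Apply the previous proposition with $n=2$; its two hypotheses reduce to the hypotheses of the corollary."
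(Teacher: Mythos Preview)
Your proposal is correct and matches the paper's intent exactly: the corollary is stated without proof because it is the $n=2$ case of the preceding proposition, and your verification that the hypotheses specialize correctly is precisely what is needed.
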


\section{The Generating Set of an Algebraic Number}\label{Sec Gen set}

In this section we will start by constructing a generating set of $\gamma$. After this we will study the properties of generating sets of $\gamma$. We will prove Proposition \ref{OK[gamma] in terms of S}, Proposition \ref{Zbar[gamma in terms of S]} and Theorem \ref{radical Dgamma 3 results}.

Given an algebraic number $\gamma\in\overline{\Q}$, and number field $K$, denote
$$\LL(\gamma,K)=\{\q\in X(K,\gamma)\mid \text{for every } K_1\subsetneq K\text{ we have } \q\cap K_1\notin X(K_1,\gamma)\}.$$
Further, we define $$\mathcal{L}(\gamma)=\{K\mid \LL(\gamma,K)\neq\emptyset\}.$$

\begin{lemma}\label{containment LL(gamma,K)}
If $K\in\LL(\gamma)$, then we have $K\subseteq\Q(\gamma)$.
\end{lemma}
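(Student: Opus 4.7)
The plan is to prove the contrapositive: if $K \not\subseteq \Q(\gamma)$, then $\LL(\gamma,K) = \emptyset$. To this end, I would set $K_1 = K \cap \Q(\gamma)$, which under the hypothesis $K \not\subseteq \Q(\gamma)$ is a strict subfield of $K$.

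The key observation is that $K_1(\gamma) = \Q(\gamma)$, because $K_1 \subseteq \Q(\gamma)$ forces $K_1(\gamma) \subseteq \Q(\gamma)$ while the reverse inclusion is immediate from $\gamma \in K_1(\gamma)$. Consequently
$$K \cap K_1(\gamma) = K \cap \Q(\gamma) = K_1,$$
so the hypothesis of the last part of Lemma \ref{Going up and down} is satisfied (with the roles of $K,L$ played by $K_1, K$).

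Applying that part of Lemma \ref{Going up and down} yields
$$X(K,\gamma) = \{\q\in\Spec(\OO_K)\mid \q\cap\OO_{K_1}\in X(K_1,\gamma)\}.$$
Hence for every $\q \in X(K,\gamma)$ we have $\q\cap K_1 \in X(K_1,\gamma)$, with $K_1 \subsetneq K$. By the definition of $\LL(\gamma,K)$ this forces $\q \notin \LL(\gamma,K)$, so $\LL(\gamma,K) = \emptyset$ and $K \notin \LL(\gamma)$. This is the desired contrapositive.

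There is no real obstacle here: once one identifies the intermediate field $K_1 = K\cap\Q(\gamma)$ as the natural ``witness'' and observes that $K_1(\gamma) = \Q(\gamma)$, the conclusion is an immediate application of Lemma \ref{Going up and down}. The only subtlety worth flagging is the verification that $K_1$ satisfies the hypothesis $K \cap K_1(\gamma) = K_1$ of that lemma, which is what makes the characterization of $X(K,\gamma)$ via intersections with $\OO_{K_1}$ available.
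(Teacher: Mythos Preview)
Your proof is correct and follows essentially the same approach as the paper: both set $K_1 = K \cap \Q(\gamma)$, verify $K \cap K_1(\gamma) = K_1$, and apply the last part of Lemma~\ref{Going up and down} to conclude that any $\q \in X(K,\gamma)$ satisfies $\q \cap K_1 \in X(K_1,\gamma)$, contradicting membership in $\LL(\gamma,K)$ unless $K_1 = K$. The only difference is cosmetic---you phrase it as a contrapositive while the paper argues directly---but the substance is identical.
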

\begin{proof}
Pick a prime $\q\in \LL(\gamma,K)$.
Let $K_1=K\cap \Q(\gamma)$. Now we have $K_1(\gamma)\cap K=K_1$ and $\q\in X(K,\gamma)$, so by Lemma \ref{Going up and down} we get $\q\cap K_1\in X(K_1,\gamma)$. Since $\q\in\LL(\gamma,K)$ and $K_1\subseteq K$, we conclude that $K_1=K$. This shows that $K\subseteq \Q(\gamma)$.
\end{proof}

\begin{corollary}
For any algebraic number $\gamma$, $\#\LL(\gamma)<\infty$.
\end{corollary}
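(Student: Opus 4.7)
The plan is to combine the containment just proved with the classical fact that a finite separable extension has only finitely many intermediate fields.

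First I would invoke Lemma \ref{containment LL(gamma,K)} to reduce the problem to counting subfields of $\Q(\gamma)$: every $K\in\LL(\gamma)$ satisfies $K\subseteq\Q(\gamma)$, so
\[\LL(\gamma)\subseteq\{K\mid \Q\subseteq K\subseteq \Q(\gamma)\}.\]
Next I would observe that $\Q(\gamma)/\Q$ is a finite extension of a field of characteristic zero, hence separable, so by the primitive element theorem (or equivalently, by passing to the Galois closure and applying the Galois correspondence to the subgroup lattice of the Galois group, which is finite) the set of intermediate fields between $\Q$ and $\Q(\gamma)$ is finite.

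Combining these two observations immediately yields $\#\LL(\gamma)<\infty$. There is no real obstacle here: the content is entirely in Lemma \ref{containment LL(gamma,K)}, which pins the candidate fields inside the single finite extension $\Q(\gamma)$, after which finiteness is a standard fact from field theory.
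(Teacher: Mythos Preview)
Your proposal is correct and matches the paper's intended argument: the corollary is stated immediately after Lemma \ref{containment LL(gamma,K)} with no separate proof, precisely because the lemma reduces $\LL(\gamma)$ to a set of subfields of the finite extension $\Q(\gamma)/\Q$, and finiteness then follows from the standard fact you cite.
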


We will use $\LL(\gamma)$ to construct a generating set of $\gamma$.
Denote $J_{K,\gamma}=\prod_{\q\in\LL(\gamma,K)}\q$, it is an ideal of $\OO_K$.
For each $K\in\LL(\gamma)$, denote the class number of $K$ as $h_{K}$. Then $J_{K,\gamma}^{h_K}$ is a principal ideal, let $\alpha_{K}\in\OO_K$ be one of its generators ($\alpha_{K}$ is not uniquely determined, but pick one).

\begin{lemma}\label{Q(alpha K)}
Let $K\in\LL(\gamma)$ and $\alpha_K$ be as above. Then $\Q(\alpha_K)=K$.
\end{lemma}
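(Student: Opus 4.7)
The containment $\Q(\alpha_K)\subseteq K$ is immediate since $\alpha_K\in\OO_K\subseteq K$, so the real content is the reverse inclusion. The plan is to argue by contradiction: assume $K_1:=\Q(\alpha_K)\subsetneq K$ and exploit the definition of $\LL(\gamma,K)$ together with Lemma \ref{Going up and down} to force a contradiction with the minimality condition built into $\LL(\gamma,K)$.

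First I would fix any $\q\in\LL(\gamma,K)$ (non-empty because $K\in\LL(\gamma)$) and let $\p=\q\cap\OO_{K_1}$. By the defining property of $\LL(\gamma,K)$, since $K_1\subsetneq K$, we must have $\p\notin X(K_1,\gamma)$. I would then examine the prime factorization of $(\alpha_K)$. Because $J_{K,\gamma}^{h_K}=(\alpha_K)$ as ideals of $\OO_K$ and $\q\mid J_{K,\gamma}$, we obtain $v_{\q}(\alpha_K)\geq h_K>0$. Crucially, $\alpha_K\in\OO_K\cap K_1=\OO_{K_1}$, so $v_{\q}(\alpha_K)=e(\q\mid\p)v_{\p}(\alpha_K)$, whence $v_{\p}(\alpha_K)>0$.

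The key observation is then that the positivity of $v_{\p}(\alpha_K)$ propagates uniformly to every prime above $\p$: for any $\q'\in\Spec(\OO_K)$ with $\q'\cap\OO_{K_1}=\p$, again $v_{\q'}(\alpha_K)=e(\q'\mid\p)v_{\p}(\alpha_K)>0$, so $\q'\mid(\alpha_K)=J_{K,\gamma}^{h_K}$, and hence $\q'\mid J_{K,\gamma}$, giving $\q'\in\LL(\gamma,K)\subseteq X(K,\gamma)$. Thus every prime of $\OO_K$ above $\p$ lies in $X(K,\gamma)$. By the first identity of Lemma \ref{Going up and down}, this is exactly the statement that $\p\in X(K_1,\gamma)$, contradicting the conclusion of the previous paragraph. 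Hence $K_1=K$, finishing the proof.

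The only potential subtlety is making sure that the ideal-theoretic computation $(\alpha_K)=J_{K,\gamma}^{h_K}$ genuinely lets us conclude that all primes dividing $(\alpha_K)$ come from $\LL(\gamma,K)$; this is clean because $J_{K,\gamma}$ is by construction a squarefree product of primes in $\LL(\gamma,K)$, so every prime divisor of $J_{K,\gamma}^{h_K}$ lies in $\LL(\gamma,K)$. Beyond that, the argument is a straightforward application of the earlier results, so I do not anticipate any serious obstacle.
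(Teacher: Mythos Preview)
Your proof is correct and follows essentially the same route as the paper: pick $\q\in\LL(\gamma,K)$, set $\p=\q\cap\OO_{K_1}$, observe that $\alpha_K\in\p$ forces every prime of $\OO_K$ above $\p$ to divide $(\alpha_K)=J_{K,\gamma}^{h_K}$ and hence lie in $\LL(\gamma,K)\subseteq X(K,\gamma)$, and then invoke Lemma~\ref{Going up and down} to conclude $\p\in X(K_1,\gamma)$, contradicting the minimality in $\LL(\gamma,K)$. The only cosmetic difference is that you phrase the argument in terms of valuations (e.g., $v_{\q'}(\alpha_K)=e(\q'\mid\p)v_{\p}(\alpha_K)>0$) whereas the paper phrases it in terms of ideal containment ($\alpha_K\in\p\OO_K\subseteq\q'$).
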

\begin{proof}
Say $\Q(\alpha_K)=K_1$, so $K_1\subseteq K$. Pick a prime $\q_1\in \LL(\gamma,K)$. Let $\p=\q_1\cap K_1\in\Spec(\OO_{K_1})$. Note that if we show that $\p\in X(K,\gamma)$, then the definition of $\LL(\gamma,K)$ will imply that $K_1=K$. Now
$$\alpha_K\OO_K=J_{K,\gamma}^{h_K}=\prod_{\q\in \LL(\gamma,K)}\q^{h_K},$$
so $\alpha_K\in \q_1$ and hence $\alpha_K\in\p$. Next let $\q_2$ be a prime of $\OO_K$ above $\p$, then $\alpha_K\in\p\OO_K\subseteq \q_2$. This implies that $\q_2$ divides $\alpha_K\OO_K$ and hence $\q_2\in \LL(\gamma,K)\subseteq X(K,\gamma)$. We have shown that every prime of $\OO_K$ above $\p$ is in $X(K,\gamma)$, hence Lemma \ref{Going up and down} implies that $\p\in X(K_1,\gamma)$. But $\p=\q_1\cap K_1$ and $\q_1\in \LL(\gamma,K)$, so we can conclude that $K_1=K$, meaning $\Q(\alpha_K)=K$.
\end{proof}

Let $S=\{\alpha_{K}\mid K\in\LL(\gamma)\}$. Note that $|S|=|\LL(\gamma)|<\infty$.

\begin{corollary}\label{prod K}
Let $S$ be as above, then $\Q(S)$ is the composite of the fields in $\LL(\gamma)$, that is,
$$\prod_{K\in\LL(\gamma)}K=\Q(S)\subseteq \Q(\gamma).$$
\end{corollary}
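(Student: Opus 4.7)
The plan is to prove the two separate statements of the corollary: the equality $\Q(S)=\prod_{K\in\LL(\gamma)}K$, and the inclusion of this composite in $\Q(\gamma)$. Both parts follow directly from the two preceding lemmas, so the proof should be short; the main work has already been done in Lemma \ref{Q(alpha K)} and Lemma \ref{containment LL(gamma,K)}.

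For the equality, I would argue two inclusions. For $\Q(S)\subseteq \prod_{K\in\LL(\gamma)}K$, note that each generator $\alpha_K$ of $S$ lies in $\OO_K$ by construction (it is a generator of the principal ideal $J_{K,\gamma}^{h_K}$), hence $\alpha_K\in K\subseteq \prod_{K'\in\LL(\gamma)}K'$. Since this composite is a field containing every element of $S$, it contains $\Q(S)$. For the reverse inclusion $\prod_{K\in\LL(\gamma)}K\subseteq \Q(S)$, apply Lemma \ref{Q(alpha K)}: for each $K\in\LL(\gamma)$ we have $K=\Q(\alpha_K)\subseteq \Q(S)$ because $\alpha_K\in S$. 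Taking the composite over all $K\in\LL(\gamma)$ yields $\prod_{K\in\LL(\gamma)}K\subseteq \Q(S)$.

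For the containment $\prod_{K\in\LL(\gamma)}K\subseteq \Q(\gamma)$, I would simply invoke Lemma \ref{containment LL(gamma,K)}, which says that every $K\in\LL(\gamma)$ is a subfield of $\Q(\gamma)$. The composite of subfields of a fixed field $\Q(\gamma)$ is still a subfield of $\Q(\gamma)$, so the composite is contained in $\Q(\gamma)$ as required.

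There is no real obstacle here: the hard content was packaged into the two previous lemmas (identifying $\Q(\alpha_K)$ with $K$, and bounding each $K\in\LL(\gamma)$ inside $\Q(\gamma)$). The corollary is essentially a formal assembly of these two facts, and the proof should fit in a few lines.
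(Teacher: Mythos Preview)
Your proposal is correct and matches the paper's intended argument: the corollary is stated without proof precisely because it follows immediately from Lemma~\ref{Q(alpha K)} (giving $K=\Q(\alpha_K)\subseteq\Q(S)$ and conversely $\alpha_K\in K$) together with Lemma~\ref{containment LL(gamma,K)} (giving each $K\subseteq\Q(\gamma)$). There is nothing to add.
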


\begin{proposition}\label{X(L gamma) q int S}
Let $S=\{\alpha_{K}\mid K\in\LL(\gamma)\}$ as above. Then $S$ is a generating set of $\gamma$ with $|S|=\#\LL(\gamma)$.
\end{proposition}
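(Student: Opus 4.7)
The plan is to prove the cardinality claim first and then verify the two inclusions that together give the generating-set property. For the cardinality, Lemma \ref{Q(alpha K)} shows $\Q(\alpha_K) = K$ for every $K \in \LL(\gamma)$, so distinct fields yield distinct generators $\alpha_K$, giving $|S| = \#\LL(\gamma)$ immediately.

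For the generating-set property, fix a number field $L$ and a prime $\q \in \Spec(\OO_L)$; I will show $\q \in X(L,\gamma)$ is equivalent to $\q \cap S \neq \emptyset$. For the ``$\Leftarrow$'' direction, suppose some $\alpha_K \in S$ lies in $\q$. Then $\alpha_K \in \OO_L$, and since $\Q(\alpha_K) = K$ by Lemma \ref{Q(alpha K)}, this forces $K \subseteq L$. Let $\p = \q \cap \OO_K$. The factorization
$$\alpha_K\OO_K = J_{K,\gamma}^{h_K} = \prod_{\mathfrak{r}\in\LL(\gamma,K)}\mathfrak{r}^{h_K}$$
combined with $\alpha_K \in \p$ and primality of $\p$ forces $\p = \mathfrak{r}$ for some $\mathfrak{r} \in \LL(\gamma,K)$, so $\p \in X(K,\gamma)$. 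Then the inclusion $X(L,\gamma) \supseteq \{\q' \mid \q'\cap\OO_K \in X(K,\gamma)\}$ from Lemma \ref{Going up and down} delivers $\q \in X(L,\gamma)$.

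For the ``$\Rightarrow$'' direction, suppose $\q \in X(L,\gamma)$. Consider the collection of subfields $K' \subseteq L$ satisfying $\q \cap \OO_{K'} \in X(K',\gamma)$. This collection contains $L$ and is finite because $L$ has finitely many subfields, so I can choose a minimal element $K$ with respect to inclusion. Setting $\p = \q \cap \OO_K$, the minimality of $K$ together with the identity $\q \cap \OO_{K_1} = \p \cap \OO_{K_1}$ for $K_1 \subseteq K$ translates directly into the condition $\p \in \LL(\gamma,K)$; in particular $K \in \LL(\gamma)$, so $\alpha_K \in S$ is defined. Since $\p$ appears in the prime factorization of $\alpha_K\OO_K$, we get $\alpha_K \in \p \subseteq \q$, hence $\q \cap S \neq \emptyset$.

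The one step requiring genuine care is the minimality argument in the reverse direction: I need the minimal $K$ to produce a prime $\p$ that lies in $\LL(\gamma,K)$, not merely in $X(K,\gamma)$. This is the reason for working with minimality among \emph{all} subfields of $L$ (so that the minimality condition transports cleanly through $\p \cap \OO_{K_1} = \q \cap \OO_{K_1}$) rather than trying to identify $K$ directly from the data of $\q$. Aside from this structural point, the rest of the argument is a direct application of the construction of $\alpha_K$ and the already-established Lemmas \ref{Going up and down} and \ref{Q(alpha K)}.
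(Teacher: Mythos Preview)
Your proof is correct and follows essentially the same approach as the paper: verify the cardinality via Lemma~\ref{Q(alpha K)}, handle the ``$\Leftarrow$'' direction by pushing $\alpha_K \in \q$ down to $\q \cap \OO_K \in \LL(\gamma,K)$ and applying Lemma~\ref{Going up and down}, and handle the ``$\Rightarrow$'' direction by a minimality argument to land in some $\LL(\gamma,K)$. The one difference is that the paper first descends from $L$ to $K_1 = L \cap \Q(\gamma)$ (using the last clause of Lemma~\ref{Going up and down}) and then minimizes among subfields of $K_1$, whereas you minimize directly among subfields of $L$; your version is slightly more economical, since any proper subfield of your minimal $K$ is automatically a subfield of $L$, so minimality in $L$ already delivers $\p \in \LL(\gamma,K)$ without the intermediate step.
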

\begin{proof}
We know that $|S|=\#\LL(\gamma)$, so we only need to show that for every number field $L$ we have
$$X(L,\gamma)=\{\q\in\Spec(\OO_L)\mid \q\cap S\neq\emptyset\}.$$
Fix a number field $L$. Consider a prime $\q\in\Spec(\OO_L)$ for which $\q\cap S\neq \emptyset$. Say $\alpha_K\in \q$, $K\in\LL(\gamma)$. Now $\alpha_K\in \q\subseteq L$, so $K=\Q(\alpha_K)\subseteq L$.
Let $\q''=\q\cap K$, so $\alpha_K\in\q''$. Since
$$\alpha_K\OO_K=J_{K,\gamma}^{h_K}=\prod_{\p\in \LL(\gamma,K)}\p^{h_K},$$
we see that $\q''\in\LL(\gamma,K)\subseteq X(K,\gamma)$. Then by Lemma \ref{Going up and down} we see that $\q\in X(L,\gamma)$.

For the other direction, consider a prime $\q\in X(L,\gamma)$. Let $K_1=L\cap \Q(\gamma_1)$ and $\q'=\q\cap K_1$. Then $K_1(\gamma)\cap L=K_1$, so by Lemma \ref{Going up and down} we see that $\q'\in X(K_1,\gamma)$.
\begin{center}
\begin{tikzpicture}
\node (1) at (0,0) {$\Q$};
\node (2) at (0,0.9) {$\Q(\alpha_K)=K$};
\node (3) at (0,1.8) {$L \cap \Q(\gamma)=K_1$};
\node (4) at (-1,2.6) {$L$};
\node (5) at (1,2.6) {$\Q(\gamma)$};
\node (6) at (4,0) {$\q \cap \mathbb{Z}$};
\node (7) at (4,0.9) {$\q''=\q \cap K$};
\node (8) at (4,1.8) {$\q'=\q \cap K_1$};
\node (9) at (3,2.6) {$\q$};
\node (10) at (5,2.6) {$\p$};
\draw [black] (1) -- (2);
\draw [black] (2) -- (3);
\draw [black] (3) -- (4);
\draw [black] (3) -- (5);
\draw [black] (6) -- (7);
\draw [black] (7) -- (8);
\draw [black] (8) -- (9);
\draw [black] (8) -- (10);
\end{tikzpicture}
\end{center}
Now, consider the set
$$\{E\mid E\subseteq K_1, \q'\cap E\in X(E,\gamma)\}.$$
This is a finite collection of fields, so let $K$ be a minimal field in this collection. Let $\q''=\q'\cap K=\q\cap K$. It follows from minimality of $K$ that $\q''\in \LL(\gamma,K)$ and hence $K\in\LL(\gamma)$. From the definition of $\alpha_K$, we see that $\alpha_K\in\q''$. We conclude that $\alpha_K\in \q$ and hence $\q\cap S\neq\emptyset$.
\end{proof}

\begin{example}
Consider $\gamma=\frac{1}{60+15i}$. Then $X(\Q,\gamma)=\{3,5\}$ and $X(\Q(i),\gamma)=\{3,2+i,2-i,4+i\}$. Therefore $\LL(\gamma,\Q)=\{3,5\}$, $\LL(\gamma,\Q(i))=\{4+i\}$ and $\LL(\gamma)=\{\Q,\Q(i)\}$. We have $\alpha_{\Q}=15$ and $\alpha_{\Q(i)}=4+i$. Therefore for every number field $L$,
$$X(L,\gamma)=\{\q\in\Spec(\OO_L)\mid \{15,4+i\}\cap S\neq \emptyset\}.$$
\end{example}

\begin{proposition}\label{Q adjoin generating set}
Suppose $S'$ is a generating set of $\gamma$.
Then we have $\#\LL(\gamma)\leq |S'|$ and the composite of the fields in $\LL(\gamma)$ is contained in $\Q(S')$.
Moreover, if $|S'|=\#\LL(\gamma)$, then we have $$\prod_{K\in\LL(\gamma)}K=\Q(S')\subseteq \Q(\gamma).$$
\end{proposition}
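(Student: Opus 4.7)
The plan is to build an injection $\iota: \LL(\gamma) \to S'$ by sending each minimal field $K$ to an element $\alpha \in S'$ that satisfies $\Q(\alpha) = K$. Once this is constructed, injectivity of $\iota$ yields $\#\LL(\gamma) \leq |S'|$; the identity $K = \Q(\iota(K))$ forces $K \subseteq \Q(S')$; and when the sizes coincide, $\iota$ is a bijection, forcing every element of $S'$ to live in some $K \in \LL(\gamma)$, which gives the reverse inclusion $\Q(S') \subseteq \prod_{K \in \LL(\gamma)} K$.

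To produce the required generator for a given $K \in \LL(\gamma)$, I would pick any prime $\q \in \LL(\gamma, K)$. Since $\q \in X(K, \gamma)$ and $S'$ is a generating set, the defining property of $S'$ supplies an $\alpha \in S'$ with $\alpha \in \q$. In particular $\alpha \in \OO_K$, so the field $K_1 := \Q(\alpha)$ satisfies $K_1 \subseteq K$. Now consider $\p := \q \cap \OO_{K_1}$. We have $\alpha \in \p \cap S'$, and applying the generating set property at $K_1$ yields $\p \in X(K_1, \gamma)$. This is the crux of the argument: the prime $\q$ sits in $\LL(\gamma, K)$, so if $K_1$ were a proper subfield of $K$ we would have $\q \cap K_1 = \p \notin X(K_1, \gamma)$, a contradiction. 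Hence $K_1 = K$, i.e., $\Q(\alpha) = K$. This step is where the minimality encoded in $\LL(\gamma, K)$ is essential; everything else is bookkeeping.

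Define $\iota(K)$ to be any such $\alpha$. If $K \neq K'$, then $\Q(\iota(K)) = K \neq K' = \Q(\iota(K'))$ forces $\iota(K) \neq \iota(K')$, so $\iota$ is injective and $\#\LL(\gamma) \leq |S'|$. Since $K = \Q(\iota(K)) \subseteq \Q(S')$ for every $K \in \LL(\gamma)$, taking composites gives $\prod_{K \in \LL(\gamma)} K \subseteq \Q(S')$. Finally, suppose $|S'| = \#\LL(\gamma)$. Then the injection $\iota$ is a bijection, so every $\alpha \in S'$ equals $\iota(K)$ for some $K \in \LL(\gamma)$; in particular $\alpha \in K \subseteq \prod_{K \in \LL(\gamma)} K$. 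This gives $\Q(S') \subseteq \prod_{K \in \LL(\gamma)} K$, hence equality, while the containment in $\Q(\gamma)$ is immediate from Lemma \ref{containment LL(gamma,K)} applied to each $K \in \LL(\gamma)$.
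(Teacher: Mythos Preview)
Your proof is correct and follows essentially the same approach as the paper's: pick a prime $\q\in\LL(\gamma,K)$, use the generating-set property to extract an element $\alpha\in S'\cap\q$, and then invoke the minimality built into $\LL(\gamma,K)$ to force $\Q(\alpha)=K$. The only cosmetic difference is that you frame the argument explicitly as constructing an injection $\iota:\LL(\gamma)\to S'$, whereas the paper phrases it as producing elements $\beta_K$; the logic is identical.
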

\begin{proof}
Consider $K\in \LL(\gamma)$ and $\p\in \LL(\gamma,K)$. Then $\p\in X(K,\gamma)$, so $\p\cap S'\neq\emptyset$. Say $\beta_K\in \p\cap S'$. Now let $K_1=\Q(\beta_K)$ and $\p'=\p\cap K_1$. Then $\beta_K\in \p'\cap S'$, so $\p'\in X(K_1,\gamma)$. But then the definition of $\LL(\gamma,K)$ implies that $K_1=K$, meaning that $\Q(\beta_K)=K$. This shows that for distinct $K\in\LL(\gamma)$, we have distinct $\beta_K\in S'$. So we conclude that $|\LL(\gamma)|\leq |S'|$. Moreover, the fact that $\beta_K\in S'$ and $\Q(\beta_K)=K$ implies that the composite
$\prod_{K\in \LL(\gamma)}K\subseteq \Q(S')$. Finally if $|S'|=|\LL(\gamma)|$, then $S'=\{\beta_K\mid K\in \LL(\gamma)\}$ and hence $\Q(S')=\prod_{K\in\LL(\gamma)}K$.
\end{proof}

\begin{corollary}
Suppose $S_1$ and $S_2$ are generating sets of $\gamma_1$ and $\gamma_2$ respectively with $|S_1|=\#\LL(\gamma_1)$ and $|S_2|=\#\LL(\gamma_2)$. Suppose further that $\gamma_1,\gamma_2$ have the property that for every number field $L$, $X(L,\gamma_1)=X(L,\gamma_2)$. Then $\Q(S_1)=\Q(S_2)$.
\end{corollary}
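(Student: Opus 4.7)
The plan is to reduce this to an equality of the sets $\LL(\gamma_1)$ and $\LL(\gamma_2)$, and then invoke the ``moreover'' clause of Proposition \ref{Q adjoin generating set} to identify each of $\Q(S_1), \Q(S_2)$ with the composite of the fields in that common collection.

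First I would observe that the definition
$$\LL(\gamma,K)=\{\q\in X(K,\gamma)\mid \text{for every } K_1\subsetneq K,\ \q\cap K_1\notin X(K_1,\gamma)\}$$
involves only the sets $X(K_1,\gamma)$ as $K_1$ ranges over subfields of $K$. Therefore the hypothesis $X(L,\gamma_1)=X(L,\gamma_2)$ for every number field $L$ immediately yields $\LL(\gamma_1,K)=\LL(\gamma_2,K)$ for every $K$, and hence $\LL(\gamma_1)=\LL(\gamma_2)$ as collections of fields.

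Next, since $|S_1|=\#\LL(\gamma_1)$ and $S_1$ is a generating set of $\gamma_1$, the last assertion of Proposition \ref{Q adjoin generating set} gives
$$\Q(S_1)=\prod_{K\in\LL(\gamma_1)}K.$$
The same proposition applied to $\gamma_2$ and $S_2$ yields
$$\Q(S_2)=\prod_{K\in\LL(\gamma_2)}K.$$
Combining these with $\LL(\gamma_1)=\LL(\gamma_2)$ gives $\Q(S_1)=\Q(S_2)$, as required.

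There is essentially no obstacle here: everything is bookkeeping once one notices that $\LL(\gamma,\cdot)$ depends on $\gamma$ only through the family $\{X(L,\gamma)\}_L$. The one point to be mildly careful about is that the ``moreover'' clause of Proposition \ref{Q adjoin generating set} requires the size hypothesis $|S'|=\#\LL(\gamma)$, which is exactly what we have assumed for both $S_1$ and $S_2$, so the clause applies on both sides.
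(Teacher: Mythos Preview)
Your proof is correct and follows essentially the same approach as the paper's: observe that the hypothesis forces $\LL(\gamma_1)=\LL(\gamma_2)$, then apply the ``moreover'' clause of Proposition \ref{Q adjoin generating set} on both sides to conclude $\Q(S_1)=\prod_{K\in\LL(\gamma_1)}K=\Q(S_2)$. You simply spell out in more detail why $\LL(\gamma_1)=\LL(\gamma_2)$, which the paper asserts in one line.
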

\begin{proof}
The property that for every number field $L$, $X(L,\gamma_1)=X(L,\gamma_2)$, implies that $\LL(\gamma_1)=\LL(\gamma_2)$. Now we see that
\[\Q(S_1)=\prod_{K\in\LL(\gamma_1)}K=\Q(S_2).\qedhere\]
\end{proof}

\begin{customprop}{\ref{OK[gamma] in terms of S}}
For a subset $S\subseteq\Zbar$,
$S$ is a generating set of $\gamma$ if and only if for every number field $K$, we have
$$\OO_K[\gamma]\cap K=\OO_K\left[1/\alpha\mid \alpha\in S\cap K\right].$$
\end{customprop}
\begin{proof}
First suppose that $S$ is a generating set of $\gamma$.
Fix a number field $K$. Consider $\beta \in \OO_K[\gamma]\cap K$. Then by proposition $\ref{Determine OK[gamma]cap K}$, $$\{\p \in \Spec(\OO_K)\mid v_\p(\beta)<0\} \subseteq X(K,\gamma).$$
But by the definition of generating set, $X(K,
\gamma)=\{\p \in \Spec(\OO_K)\mid\p \cap S \neq \emptyset\}$. Let $\p_1,\dots,\p_n$ be the primes in $\Spec(\OO_K)$ such that $v_{\p_i}(\beta)<0$. Then for each $\p_i$, $\p_i \cap S \neq \emptyset$. Consider $\alpha_i \in \p_i \cap S$, so $\alpha_i\in S\cap K$. Then for larger enough $N$, we have $\alpha_1^N\dots\alpha_n^N \beta\in \OO_K$, so $\beta \in \OO_K\left[\frac{1}{\alpha}\mid \alpha\in S\cap K\right]$.

Now consider $\alpha \in S\cap K$. Then for every $\p \in \Spec(\OO_K)$ such that $v_{\p}(\alpha)>v_{\p}(1)$, we have $\alpha \in S \cap \p$, so $S\cap \p \neq \emptyset$ and $\p\in X(K,\gamma)$. Then by proposition $\ref{Determine OK[gamma]cap K}$, we see that $\frac{1}{\alpha} \in \OO_K[\gamma]$. Therefore, we have $\OO_K[\gamma]\cap K=\OO_K\left[\frac{1}{\alpha}\mid \alpha\in S\cap K\right].$

Conversely, suppose for every number field $K$, we have
$$\OO_K[\gamma]\cap K=\OO_K\left[1/\alpha\mid \alpha\in S\cap K\right].$$
 We want to show that $S$ is a generating set, that is, we want to show that for every number field $K$, $X(K,\gamma)=\{\p \in \Spec\OO_K\mid \p \cap S \neq \emptyset\}$.

Suppose we have $\p \in \Spec(\OO_K)$ with $\p\cap S\neq\emptyset$. Say $\alpha \in \p \cap S$. Then $\alpha \in S \cap K$. Since $\OO_K[\gamma]\cap K=\OO_K\left[\frac{1}{\alpha}\mid \alpha\in S\cap K\right],$ we have $\frac{1}{\alpha} \in \OO_K[\gamma] \cap K$, and $v_\p(1) < v_\p(\alpha)$. Then by proposition \ref{Determine OK[gamma]cap K}, $\p \in X(K,\gamma)$.

On the other hand, suppose $\p \in X(K,\gamma)$. By Theorem \ref{X cond equiv}, there exists $\beta \in \p$ such that $\frac{1}{\beta} \in \OO_K[\gamma]$. Then $\frac{1}{\beta} \in \OO_K[\gamma] \cap K=\OO_K[\frac{1}{\alpha} \mid \alpha \in S \cap K]$. Therefore, $\frac{1}{\beta}=f(\frac{1}{\alpha_1},\dots,\frac{1}{\alpha_m})$, where $f(x_1,\dots,x_m) \in \OO_K[x_1,\dots,x_m]$ and $\alpha_i \in S \cap K$. We can clear denominators by multiply both sides by high enough power of $\alpha_1\dots\alpha_m$. We get that
$$(\alpha_1\dots\alpha_m)^N=\beta g(\alpha_1,\dots,\alpha_m),$$
and $g(\alpha_1,\dots,\alpha_m)$ is in $\OO_K$. We see that $(\alpha_1\dots\alpha_m)^N \in \p$. So there exists $i$ such that $\alpha_i \in \p$. Therefore $\alpha_i \in \p \cap S$, and hence $\p \cap S \neq \emptyset$.
\end{proof}

\begin{customprop}{\ref{Zbar[gamma in terms of S]}}
Suppose $S$ is a generating set of $\gamma$. Then we have
$$\Zbar[\gamma]=\Zbar\left[1/\alpha\mid \alpha\in S\right].$$
\end{customprop}
\begin{proof}
Pick $\alpha\in S$. Let $K=\Q(\alpha)$. Then by Proposition \ref{OK[gamma] in terms of S}, we know that $\frac{1}{\alpha}\in \OO_K[\gamma]\cap K$ and hence $\frac{1}{\alpha}\in\Zbar[\gamma]$. This shows that $\Zbar\left[\frac{1}{\alpha}\mid \alpha\in S\right]\subseteq\Zbar[\gamma]$.

For the other direction suppose $\beta\in \Zbar[\gamma]$. Say $\beta=\sum_{i=0}^m c_i\gamma^i$ for $c_i\in\Zbar$. Let $K=\Q(c_0,\dots,c_m,\beta)$. Therefore $\beta\in\OO_K[\gamma]\cap K$. Now Proposition \ref{OK[gamma] in terms of S} implies that $\beta\in \OO_K\left[\frac{1}{\alpha}\mid \alpha\in S\cap K\right]$ and hence $\beta\in \Zbar\left[\frac{1}{\alpha}\mid \alpha\in S\right]$. This shows that $\Zbar[\gamma]\subseteq\Zbar\left[\frac{1}{\alpha}\mid \alpha\in S\right]$.
\end{proof}

\begin{lemma}\label{Radical of Dgamma}
Let $S$ be a finite generating set of $\gamma$, then as ideals of $\overline{\Z}$, we have
$$\sqrt{\D_{\gamma}}=\sqrt{\Big(\prod_{\alpha\in S}\alpha\Big)\overline{\Z}}.$$
\end{lemma}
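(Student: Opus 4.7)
The plan is to show the equality by proving that $\D_\gamma$ and $\bigl(\prod_{\alpha \in S} \alpha\bigr)\overline{\Z}$ are contained in exactly the same set of prime ideals of $\overline{\Z}$. This suffices since in any ring the radical of an ideal equals the intersection of the prime ideals containing it. Fix a nonzero prime $\mathfrak{P}$ of $\overline{\Z}$ and let $v$ denote the associated valuation on $\overline{\Q}$. Also fix a number field $L$ large enough to contain $\gamma$ and all elements of $S$, and set $\mathfrak{q}=\mathfrak{P}\cap\OO_L$.

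The principal-ideal side is immediate: $\mathfrak{P}\supseteq\bigl(\prod_{\alpha\in S}\alpha\bigr)\overline{\Z}$ iff $\prod_{\alpha\in S}\alpha\in\mathfrak{P}$, which by primality is equivalent to some $\alpha\in S$ lying in $\mathfrak{P}$, equivalently in $\mathfrak{q}$ (since $S\subseteq\OO_L$). The key intermediate claim is that $\mathfrak{P}\supseteq\D_\gamma$ if and only if $v(\gamma)<0$. One direction is easy: if $v(\gamma)<0$ and $a\in\D_\gamma$, then $a\gamma\in\overline{\Z}$ forces $v(a)\geq -v(\gamma)>0$, so $a\in\mathfrak{P}$. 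For the converse, assume $v(\gamma)\geq 0$. Because $\gamma\in L$, a direct check gives
$$\D_\gamma\cap\OO_L \;=\; \bigl\{a\in\OO_L\mid a\gamma\in\OO_L\bigr\} \;=\; \prod_{\mathfrak{r}:\,v_{\mathfrak{r}}(\gamma)<0}\mathfrak{r}^{-v_{\mathfrak{r}}(\gamma)},$$
and the hypothesis $v_\mathfrak{q}(\gamma)\geq 0$ means $\mathfrak{q}$ is absent from this factorization. Hence $\D_\gamma\cap\OO_L\not\subseteq\mathfrak{q}$, and any witness $a$ also lies in $\D_\gamma\setminus\mathfrak{P}$.

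Finally, I tie the two characterizations together using the generating-set property. By Theorem \ref{X cond equiv} applied inside $L$ (where $L(\gamma)=L$), $\mathfrak{q}\in X(L,\gamma)$ if and only if $v_\mathfrak{q}(\gamma)<0$, which is equivalent to $v(\gamma)<0$. The generating-set hypothesis gives $\mathfrak{q}\in X(L,\gamma)$ if and only if $\mathfrak{q}\cap S\neq\emptyset$, i.e., some $\alpha\in S$ lies in $\mathfrak{P}$. Concatenating these equivalences identifies the set of primes containing $\D_\gamma$ with the set of primes containing $\bigl(\prod_{\alpha\in S}\alpha\bigr)\overline{\Z}$, yielding the desired equality of radicals. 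The only real obstacle is the "only if" direction of the intermediate claim, which requires pinning down the exact factorization of the classical denominator ideal inside a sufficiently large $\OO_L$; the rest is a formal translation between valuations on $\overline{\Q}$, primes of $\OO_L$, and the defining property of a generating set.
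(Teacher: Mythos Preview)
Your proof is correct. The approach differs from the paper's: whereas the paper argues element-by-element (for each $\beta$ in one radical, it passes to $L=\Q(\beta,\gamma,S)$ and uses the valuations $v_\p$ to show that a suitable power of $\beta$ lands in the other ideal), you argue prime-by-prime, using the general fact that two ideals have the same radical iff they are contained in the same primes.

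Both arguments rest on the same two ingredients---the generating-set property and the observation (from Theorem~\ref{X cond equiv}) that when $\gamma\in L$ one has $\p\in X(L,\gamma)\Longleftrightarrow v_\p(\gamma)<0$. Your route is cleaner in that it avoids the ``for sufficiently large $N$'' estimates and works with a single fixed $L\supseteq\Q(\gamma,S)$ rather than a field depending on $\beta$. The trade-off is that you implicitly invoke that each nonzero prime $\mathfrak{P}$ of $\overline{\Z}$ determines a valuation $v$ on $\overline{\Q}$ (equivalently, that $\overline{\Z}$ is a Pr\"ufer domain, so its localizations at primes are valuation rings); this is standard but is a structural fact about $\overline{\Z}$ that the paper's element-wise argument does not need. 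Your explicit computation of $\D_\gamma\cap\OO_L$ as $\prod_{v_\mathfrak{r}(\gamma)<0}\mathfrak{r}^{-v_\mathfrak{r}(\gamma)}$ is correct and makes the ``only if'' direction of your intermediate claim transparent.
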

\begin{proof}
First consider $\beta\in \D_{\gamma}$, say $\beta^{m}\in\D_{\gamma}$. Therefore $\beta^{m}\gamma\in \overline{\Z}$. Consider $L=\Q(\beta,\gamma,S)$. We want to show that there is some $N$ for which $\beta^{N}\in (\prod_{\alpha\in S}\alpha)\Zbar$. This is equivalent to showing that $\frac{\beta^{N}}{\prod_{\alpha\in S}\alpha}\in\OO_L$.
Now for a prime $\p\in\Spec(\OO_L)$, if $v_{\p}(\prod_{\alpha\in S}\alpha)>0$, then there is some $\alpha\in S$ such that $\alpha\in\p$. By definition of generating set of $\gamma$, this implies that $\p\in X(L,\gamma)$. Now since $L(\gamma)=L$, we see that $v_{\p}(\gamma)<0$. Moreover $\beta^m\gamma\in\Zbar$ implies $v_{\p}(\beta)>0$. We have shown that $v_{\p}(\prod_{\alpha\in S}\alpha)>0$ implies $v_{\p}(\beta)>0$. Therefore for a sufficiently large $N$, we will have $\frac{\beta^{N}}{\prod_{\alpha\in S}\alpha}\in\OO_L$. This shows that $\beta\in \sqrt{\left(\prod_{\alpha\in S}\alpha\right)\overline{\Z}}$.

For the other direction consider $\beta\in\sqrt{\left(\prod_{\alpha\in S}\alpha\right)\overline{\Z}}$. This means that for some $m$, $\frac{\beta^m}{\prod_{\alpha\in S}\alpha}\in\Zbar$. Again consider $L=\Q(\beta,\gamma,S)$. Therefore, we have $\frac{\beta^m}{\prod_{\alpha\in S}\alpha}\in\OO_L$. Consider $\p\in\Spec(\OO_L)$ for which $v_{\p}(\gamma)<0$. Then $\p\in X(L,\gamma)$ and hence by definition of generating set, $S\cap\p\neq\emptyset$. This implies $v_{\p}(\prod_{\alpha\in S}\alpha)>0$. Since $\frac{\beta^m}{\prod_{\alpha\in S}\alpha}\in\OO_L$, we see that $v_{\p}(\beta)>0$. We have shown that whenever a prime $\p\in\Spec(\OO_L)$ satisfies $v_{\p}(\gamma)<0$, then it also satisfies $v_{\p}(\beta)>0$. Therefore for sufficiently large $N$, we will have $\beta^N\gamma\in\OO_L$. This means that $\beta\in\sqrt{\D_{\gamma}}$.
\end{proof}

\begin{lemma}\label{prod Y}
Given a number field $K$, as ideals of $\OO_K$ we have
$$\sqrt{\D_{\gamma}\cap \OO_K}=\prod_{\p\in Y(K,\gamma)}\p.$$
\end{lemma}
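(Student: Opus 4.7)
The plan is to reduce the claim to showing that a nonzero prime $\p\in\Spec(\OO_K)$ contains $\D_{\gamma}\cap\OO_K$ if and only if $\p\in Y(K,\gamma)$. The ideal $\D_{\gamma}\cap\OO_K$ is nonzero because it contains $d(\gamma)\in\Z$, and $Y(K,\gamma)$ is finite since only finitely many primes of $\OO_K$ can appear in the denominator of any $b_i$. Because $\OO_K$ is a Dedekind domain, the radical of any nonzero ideal equals the product of the finitely many primes containing it, so the above equivalence immediately yields the claimed identity.

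For the forward implication, I would take $\p\in Y(K,\gamma)$ and use Theorem~\ref{Y cond equiv} to pick some $\q\in\Spec(\OO_{K(\gamma)})$ above $\p$ with $v_{\q}(\gamma)<0$. For any $a\in\D_{\gamma}\cap\OO_K$, the element $a\gamma$ lies in $K(\gamma)\cap\Zbar=\OO_{K(\gamma)}$, so $v_{\q}(a\gamma)\ge 0$, forcing $v_{\q}(a)\ge -v_{\q}(\gamma)>0$. Since $a\in\OO_K$ we have $v_{\q}(a)=e(\q|\p)\,v_{\p}(a)$, hence $v_{\p}(a)>0$ and $a\in\p$.

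For the reverse implication, given $\p\notin Y(K,\gamma)$, I need to exhibit some $a\in\D_{\gamma}\cap\OO_K$ with $a\notin\p$. For each $\p'\in Y(K,\gamma)$, pick an integer $N_{\p'}$ large enough that $N_{\p'}\,e(\q'|\p')\ge -v_{\q'}(\gamma)$ for every $\q'\in\Spec(\OO_{K(\gamma)})$ above $\p'$, and set $I=\prod_{\p'\in Y(K,\gamma)}(\p')^{N_{\p'}}$. Since $\p$ differs from every $\p'$, the ideal $I$ is not contained in $\p$, so I can pick $a\in I\setminus\p$; clearly $a\in\OO_K$ and $a\notin\p$. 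To finish, I would verify $a\gamma\in\OO_{K(\gamma)}\subseteq\Zbar$ valuation by valuation: for $\q\in\Spec(\OO_{K(\gamma)})$ with $\p''=\q\cap\OO_K$, either $\p''\in Y(K,\gamma)$ and the construction forces $v_{\q}(a)\ge -v_{\q}(\gamma)$, or $\p''\notin Y(K,\gamma)$ and Theorem~\ref{Y cond equiv} gives $v_{\q}(\gamma)\ge 0$ while $v_{\q}(a)\ge 0$ holds automatically.

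I expect the reverse direction to be the main technical hurdle, since it requires an explicit construction rather than a local valuation computation. The key insight is that the only obstructions to $a\gamma$ being an algebraic integer come from primes in $Y(K,\gamma)$, so concentrating $a$ at those primes to sufficiently high order forces integrality, and the freedom to avoid $\p$ is automatic because $\p$ is not among them.
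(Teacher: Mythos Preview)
Your proposal is correct and follows essentially the same approach as the paper: both directions hinge on the valuation argument that for $\q\in\Spec(\OO_{K(\gamma)})$ above $\p$, one has $v_{\q}(\gamma)<0$ precisely when $\p\in Y(K,\gamma)$, so integrality of $a\gamma$ is controlled entirely by the primes in $Y(K,\gamma)$. The only cosmetic difference is that for the reverse inclusion the paper takes an arbitrary $\beta\in\prod_{\p\in Y(K,\gamma)}\p$ and shows $\beta^{N}\gamma\in\OO_{K(\gamma)}$ for large $N$, whereas you fix $\p\notin Y(K,\gamma)$ and build a witness $a\in(\D_{\gamma}\cap\OO_K)\setminus\p$; these are dual formulations of the same computation.
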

\begin{proof}
First suppose $\beta\in \sqrt{\D_{\gamma}\cap\OO_K}$, say $\beta^N\in \D_{\gamma}\cap \OO_K$ so $\beta^N\gamma\in\Zbar$. Consider $\p\in Y(K,\gamma)$, so there is a prime $\q\in\Spec(\OO_{K(\gamma)})$ such that $\q\cap K=\p$ and $v_{\q}(\gamma)<0$. This implies $v_{\q}(\beta)>0$. Since $\beta\in K$, we see that $v_{\p}(\beta)>0$. We have shown that $v_{\p}(\beta)>0$ for all $\p\in Y(K,\gamma)$. This shows that $\beta\in\prod_{\p\in Y(K,\gamma)}\p$.

For the other direction, suppose $\beta\in\prod_{\p\in Y(K,\gamma)}\p$. Let $\q\in\Spec(\OO_{K(\gamma)})$ be a prime for which $v_{\q}(\gamma)<0$. Then $\p=\q\cap K\in Y(K,\gamma)$. Therefore, $v_{\p}(\beta)>0$ and hence $v_{\q}(\beta)>0$. We have shown that for every prime $\q\in\Spec(\OO_{K(\gamma)})$ if $v_{\q}(\gamma)<0$ then $v_{\q}(\beta)>0$. Therefore for sufficiently large $N$, we have $\beta^N\gamma\in \OO_{K(\gamma)}$. This shows that $\beta^N\in\D_{\gamma}$ and hence $\beta\in \sqrt{\D_{\gamma}\cap\OO_K}$.
\end{proof}

\begin{lemma}\label{prod p in X}
Let $S$ be a finite generating set of $\gamma$. Then for any number field $K$, as ideals of $\OO_K$ we have
$$\sqrt{\Big(\prod_{\alpha\in S\cap K}\alpha\Big)\OO_K}=\prod_{\p\in X(K,\gamma)}\p.$$
\end{lemma}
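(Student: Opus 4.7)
The plan is to apply the standard fact that in the Dedekind domain $\OO_K$, the radical of a nonzero ideal is the product of the distinct prime ideals containing it (equivalently, the primes appearing in its unique factorization, each taken to the first power). Applied to $I=\bigl(\prod_{\alpha\in S\cap K}\alpha\bigr)\OO_K$, this reduces the problem to identifying which primes $\p\in\Spec(\OO_K)$ contain this principal ideal.

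First I would observe that $\p$ contains $I$ if and only if $\prod_{\alpha\in S\cap K}\alpha\in\p$, and by primality of $\p$ this happens precisely when some $\alpha\in S\cap K$ lies in $\p$, i.e.\ $\p\cap(S\cap K)\neq\emptyset$. Since any element of $S$ that lies in $\p$ automatically lies in $K$ (because $\p\subseteq\OO_K\subseteq K$), we have $\p\cap(S\cap K)=\p\cap S$. Hence the primes containing $I$ are exactly those $\p\in\Spec(\OO_K)$ with $\p\cap S\neq\emptyset$. By the definition of a generating set of $\gamma$, this is the set $X(K,\gamma)$, and combining the two observations yields
$$\sqrt{\Big(\prod_{\alpha\in S\cap K}\alpha\Big)\OO_K}=\prod_{\p\in X(K,\gamma)}\p.$$

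The only edge case to address is $S\cap K=\emptyset$: the empty product equals $1$, so the left-hand side is $\sqrt{\OO_K}=\OO_K$, i.e.\ the empty product of primes; on the right, $\p\cap S\subseteq K\cap S=\emptyset$ for every $\p$, so $X(K,\gamma)=\emptyset$ and the right-hand side is again the empty product. Since the argument is essentially a formal combination of the Dedekind radical formula with the generating-set definition, there is no real obstacle; finiteness of $S$ is used only to guarantee that $\prod_{\alpha\in S\cap K}\alpha$ is a well-defined element of $\OO_K$.
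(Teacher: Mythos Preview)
Your proof is correct and follows essentially the same approach as the paper: both identify the radical as the intersection (equivalently, product) of the primes containing $\prod_{\alpha\in S\cap K}\alpha$, observe via primality that these are exactly the primes meeting $S\cap K$, and then invoke the definition of generating set to conclude this is $X(K,\gamma)$. Your explicit remark that $\p\cap(S\cap K)=\p\cap S$ and your handling of the edge case $S\cap K=\emptyset$ are minor elaborations the paper leaves implicit.
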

\begin{proof}
We know that
$$\sqrt{\Big(\prod_{\alpha\in S\cap K}\alpha\Big)\OO_K}=\bigcap_{\left(\prod_{\alpha\in S\cap K}\alpha\right)\in\p}\p.$$
Now the primes that contain $\left(\prod_{\alpha\in S\cap K}\alpha\right)$ are precisely those that contain at least one $\alpha\in S\cap K.$ By the definition of generating set of $\gamma$, these are the primes in $X(K,\gamma)$. This is a finite set of primes and hence
\[\bigcap_{\p\in X(K,\gamma)}\p=\prod_{\p\in X(K,\gamma)}\p.\qedhere\]
\end{proof}

\begin{proof}[Proof of Theorem \ref{radical Dgamma 3 results}]
Follows from Lemma \ref{Radical of Dgamma}, Lemma \ref{prod Y} and Lemma \ref{prod p in X}.
\end{proof}

\begin{theorem}\label{X(gamma1=Xgamma2)}
For $\gamma_1,\gamma_2\in \overline{\Q}$, the following are equivalent:
\begin{enumerate}
    \item $X(\Q(\gamma_1,\gamma_2),\gamma_1)=X(\Q(\gamma_1,\gamma_2),\gamma_2)$.
    \item For every number field $L$, we have $X(L,\gamma_1)=X(L,\gamma_2)$.
    \item $\sqrt{\D_{\gamma_1}}=\sqrt{\D_{\gamma_2}}$.
    \item For every number field $L$, we have $Y(L,\gamma_1)=Y(L,\gamma_2)$.
    \item $Y(\Q(\gamma_1,\gamma_2),\gamma_1)=Y(\Q(\gamma_1,\gamma_2),\gamma_2)$.
\end{enumerate}
\end{theorem}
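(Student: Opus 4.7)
The plan is to establish the equivalences in the chain $(2)\Leftrightarrow(1)\Leftrightarrow(5)\Leftrightarrow(4)\Leftrightarrow(3)$ by first exploiting the special feature of the field $K=\Q(\gamma_1,\gamma_2)$: since $K$ already contains both $\gamma_i$, we have $K(\gamma_i)=K$, and each prime $\p\in\Spec(\OO_K)$ is the unique prime of $\OO_{K(\gamma_i)}$ lying above itself. Consequently the ``every prime'' and ``some prime'' quantifications defining $X$ and $Y$ collapse, and
$$X(K,\gamma_i)=Y(K,\gamma_i)=\{\p\in\Spec(\OO_K)\mid v_\p(\gamma_i)<0\}.$$
Thus (1) and (5) literally assert the same equality of subsets of $\Spec(\OO_K)$, giving $(1)\Leftrightarrow(5)$. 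The implications $(2)\Rightarrow(1)$ and $(4)\Rightarrow(5)$ are trivial specializations to $L=K$, so only $(1)\Rightarrow(2)$, $(5)\Rightarrow(4)$ and $(3)\Leftrightarrow(4)$ remain.

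For $(1)\Rightarrow(2)$ I would fix an arbitrary number field $L$ and pass to the compositum $M=L(\gamma_1,\gamma_2)$, which sits above $K$. Because $\gamma_i\in K$, we have $K(\gamma_i)=K\subseteq M$, so $M\cap K(\gamma_i)=K$ is automatic, and the ``moreover'' clause of Lemma \ref{Going up and down} gives
$$X(M,\gamma_i)=\{\q\in\Spec(\OO_M)\mid \q\cap\OO_K\in X(K,\gamma_i)\}.$$
Hypothesis (1) then forces $X(M,\gamma_1)=X(M,\gamma_2)$. Applying the first (general) part of Lemma \ref{Going up and down} to the inclusion $L\subseteq M$ descends this equality and yields $X(L,\gamma_1)=X(L,\gamma_2)$. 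The implication $(5)\Rightarrow(4)$ is obtained by the same argument verbatim, using Lemma \ref{Y going up down} in place of Lemma \ref{Going up and down}.

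Finally, for $(3)\Leftrightarrow(4)$ I would invoke Theorem \ref{radical Dgamma 3 results}, which identifies $\sqrt{\D_\gamma\cap\OO_K}=\prod_{\p\in Y(K,\gamma)}\p$. A direct check gives $\sqrt{I}\cap\OO_K=\sqrt{I\cap\OO_K}$ for any ideal $I$ of $\Zbar$, so $\sqrt{\D_{\gamma_1}}=\sqrt{\D_{\gamma_2}}$ if and only if the two radicals have equal contraction to every $\OO_K$ (the converse direction using $\Zbar=\bigcup_K\OO_K$, since any element of $\Zbar$ lies in some $\OO_K$). By Theorem \ref{radical Dgamma 3 results} and unique factorization of ideals in the Dedekind domain $\OO_K$, this is equivalent to $Y(K,\gamma_1)=Y(K,\gamma_2)$ for every $K$, which is condition (4). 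The one place to be careful is $(1)\Rightarrow(2)$: the whole manoeuvre turns on noticing that $\gamma_i\in K\subseteq M$ trivializes the hypothesis $M\cap K(\gamma_i)=K$ of the ``moreover'' clause, after which every remaining step is a mechanical application of the two going-up/going-down lemmas.
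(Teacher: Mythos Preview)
Your argument is correct, and the overall shape differs from the paper's in a couple of instructive ways. The paper proves the cyclic chain $(1)\Rightarrow(2)\Rightarrow(3)\Rightarrow(4)\Rightarrow(5)\Rightarrow(1)$, whereas you establish the linear chain $(2)\Leftrightarrow(1)\Leftrightarrow(5)\Leftrightarrow(4)\Leftrightarrow(3)$. Two concrete differences stand out. First, for $(1)\Rightarrow(2)$ the paper takes an arbitrary $L$, \emph{descends} to the intersection $K'=L\cap\Q(\gamma_1,\gamma_2)$ via the first part of Lemma~\ref{Going up and down}, and then applies the ``moreover'' clause to $K'\subseteq L$ (checking $K'(\gamma_i)\cap L=K'$); you instead \emph{ascend} to the compositum $M=L(\gamma_1,\gamma_2)$, apply the ``moreover'' clause to $K\subseteq M$ (where the hypothesis $M\cap K(\gamma_i)=K$ is trivial since $K(\gamma_i)=K$), and then descend via the first part of the lemma to $L\subseteq M$. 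Both routes are clean; yours makes the verification of the ``moreover'' hypothesis completely automatic. Second, the paper links (2) to (3) through the generating-set machinery (Lemma~\ref{Radical of Dgamma}), whereas you bypass generating sets entirely and prove $(3)\Leftrightarrow(4)$ directly from the formula $\sqrt{\D_\gamma\cap\OO_K}=\prod_{\p\in Y(K,\gamma)}\p$ of Theorem~\ref{radical Dgamma 3 results} together with $\sqrt{I}\cap\OO_K=\sqrt{I\cap\OO_K}$ and $\Zbar=\bigcup_K\OO_K$. This is a genuine simplification: it shows the equivalence $(3)\Leftrightarrow(4)$ does not rely on the existence of finite generating sets at all. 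Your argument also gives $(5)\Rightarrow(4)$ directly via Lemma~\ref{Y going up down}, which the paper does not need since its cycle goes the other way.
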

\begin{proof}
Suppose $\gamma_1,\gamma_2$ satisfy the condition $(1)$. Consider a number field $L$ and let $K=L \cap \Q(\gamma_1, \gamma_2)$. Then by Lemma \ref{Going up and down} we have
\begin{equation*}
\begin{split}
    X(K,\gamma_1)&=\{\p\in\Spec(\OO_K)\mid \forall\q\in\Spec(\OO_{\Q(\gamma_1,\gamma_2)}), \text{ if } \q\cap\OO_K=\p \text{ then } \q\in X(\Q(\gamma_1,\gamma_2),\gamma_1)\}\\
    &=\{\p\in\Spec(\OO_K)\mid \forall\q\in\Spec(\OO_{\Q(\gamma_1,\gamma_2)}), \text{ if } \q\cap\OO_K=\p \text{ then } \q\in X(\Q(\gamma_1,\gamma_2),\gamma_2)\}\\
    &=X(K,\gamma_2).
\end{split}
\end{equation*}
Moreover we have $K(\gamma_1)\cap L=K$, $K(\gamma_2)\cap L=K$ and hence by Lemma \ref{Going up and down} we have
\begin{equation*}
    \begin{split}
        X(L,\gamma_1)&=\{\q\in\Spec(\OO_L)\mid \q\cap\OO_K\in X(K,\gamma_1)\}\\
        &=\{\q\in\Spec(\OO_L)\mid \q\cap\OO_K\in X(K,\gamma_2)\}\\
        &=X(L,\gamma_2).
    \end{split}
\end{equation*}
Therefore $\gamma_1,\gamma_2$ satisfy condition $(2)$.

Next suppose $\gamma_1,\gamma_2$ satisfy condition $(2)$. Let $S$ be a finite generating set of $\gamma_1$. Then $S$ is a finite generating set of $\gamma_2$ as well. We conclude from Lemma \ref{Radical of Dgamma} that $\sqrt{\D_{\gamma_1}}=\sqrt{\D_{\gamma_2}}$. Therefore $\gamma_1,\gamma_2$ satisfy condition $(3)$.

Next suppose $\gamma_1,\gamma_2$ satisfy condition $(3)$. Then for any number field $L$, we have
$$\sqrt{\D_{\gamma_1}\cap\OO_L}=\sqrt{\D_{\gamma_1}}\cap \OO_L=\sqrt{\D_{\gamma_2}}\cap\OO_L=\sqrt{\D_{\gamma_2}\cap\OO_L}.$$
Now Lemma \ref{prod Y} implies $Y(L,\gamma_1)=Y(L,\gamma_2)$.
Therefore $\gamma_1,\gamma_2$ satisfy condition $(4)$.

Condition $(4)$ clearly implies condition $(5)$.

Finally suppose $\gamma_1,\gamma_2$ satisfy condition $(5)$. Let $K=\Q(\gamma_1,\gamma_2)$. Then $K(\gamma_1)=K$, so we have
$$X(K,\gamma_1)=\{\p\in\Spec(\OO_K)\mid v_{\p}(\gamma_1)<0\}=Y(K,\gamma_1).$$
Similarly we see that $X(K,\gamma_2)=Y(K,\gamma_2)$. Therefore $X(\Q(\gamma_1,\gamma_2),\gamma_1)=X(\Q(\gamma_1,\gamma_2),\gamma_2)$ and $\gamma_1,\gamma_2$ satisfy condition $(1)$.
\end{proof}

\section{Local case}
\begin{proposition}\label{local case}
Let $K$ be a finite extension of $\Q_p$ and $\gamma\in\overline{\Q_p}$. Let $v$ be the unique valuation on $\overline{\Q_p}$ that extends $v_p$. Then
\begin{itemize}
    \item If $v(\gamma)\geq 0$, then $\OO_K[\gamma]\cap K=\OO_K$.
    \item If $v(\gamma)<0$, then $\OO_K[\gamma]\cap K=K$.
\end{itemize}
\end{proposition}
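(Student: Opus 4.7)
The proof splits according to whether $v(\gamma) \geq 0$ or $v(\gamma) < 0$. In both cases the essential structural fact is that $\OO_K$ is a discrete valuation ring, so that everything reduces to analyzing a single uniformizer $\pi$.

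The case $v(\gamma) \geq 0$ is a quick integrality argument. The valuation ring of $v$ on $\overline{\Q_p}$ is exactly the integral closure of $\Z_p$ in $\overline{\Q_p}$, so $v(\gamma) \geq 0$ forces $\gamma$ to be integral over $\Z_p$, hence over $\OO_K$. Therefore $\OO_K[\gamma]$ is a finite $\OO_K$-module, each of its elements is integral over $\OO_K$, and intersecting with $K$ yields elements of $K$ integral over $\OO_K$. Since $\OO_K$ is integrally closed in $K$, this intersection is precisely $\OO_K$.

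The case $v(\gamma) < 0$ is the substantive one. My plan is to show directly that $1/\pi \in \OO_K[\gamma]$; this suffices because $K = \OO_K[1/\pi]$ as a ring, whence $K \subseteq \OO_K[\gamma] \cap K \subseteq K$. To locate $1/\pi$ inside $\OO_K[\gamma]$, I work with the monic minimal polynomial $f_{K,\gamma}(x) = x^n + b_{n-1}x^{n-1} + \dots + b_0 \in K[x]$. Writing the roots of $f_{K,\gamma}$ in $\overline{\Q_p}$ as $\gamma = \gamma_1, \dots, \gamma_n$, the uniqueness of $v$ forces $v$ to be invariant under every automorphism of $\overline{\Q_p}/\Q_p$; since the $\gamma_i$ are mutually $\Q_p$-conjugate, I obtain $v(\gamma_i) = v(\gamma) < 0$ for every $i$. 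Expressing the $b_i$ as $\pm$elementary symmetric polynomials in the $\gamma_i$, I then get $v(b_0) = nv(\gamma)$ and $v(b_i) > v(b_0)$ for every $i \in [1,n]$ (the strictness uses $v(\gamma) < 0$).

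Now I clear denominators: choose $a \in \OO_K$ such that every $c_i := ab_i$ lies in $\OO_K$. Measured by the normalized integer-valued valuation $v_K$ on $K$, the strict inequality becomes $v_K(c_i) \geq v_K(c_0) + 1$ for all $i \in [1,n]$. Writing $c_0 = u\pi^M$ with $u \in \OO_K^{\times}$ and $M = v_K(c_0)$, the identity $\sum_{i=0}^n c_i \gamma^i = 0$ rearranges to
\[
u\pi^M \;=\; -\pi^{M+1} \sum_{i=1}^{n} \frac{c_i}{\pi^{M+1}}\,\gamma^i,
\]
where every $c_i/\pi^{M+1}$ lies in $\OO_K$. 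Dividing by $u\pi^{M+1}$ exhibits $1/\pi$ as an element of $\OO_K[\gamma]$, completing the argument. The one delicate point is the Galois-invariance step forcing $v(\gamma_i) = v(\gamma)$, for which the unique-extension property of $v$ from $v_p$ to $\overline{\Q_p}$ is indispensable; the remaining manipulations are routine symmetric-function and DVR bookkeeping.
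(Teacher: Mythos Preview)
Your proof is correct and follows essentially the same strategy as the paper's: both split on the sign of $v(\gamma)$, handle the nonnegative case by a straightforward valuation/integrality argument, and in the negative case exploit that all roots of $f_{K,\gamma}$ share the valuation $v(\gamma)$ to deduce $v(b_i)>v(b_0)$ for $i\ge 1$, then manipulate the minimal-polynomial relation to produce an element of negative $K$-valuation inside $\OO_K[\gamma]$. Your version is slightly more streamlined in the second case: you aim directly at $1/\pi$ by clearing denominators and factoring out $\pi^{M+1}$, whereas the paper first divides by the $b_i$ of smallest valuation to obtain \emph{some} element $b\in\OO_K[\gamma]\cap K$ with $v_K(b)<0$ and only then adjusts by a power of the uniformizer to reach valuation $-1$; the endpoint and the underlying idea are the same.
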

\begin{proof}
If $v(\gamma) \geq 0$, let $\Tilde{v}$ be the restriction of $v$ to $K(\gamma)$ and $v_K$ be the restriction of $v$ to $K$. Then for $\frac{a}{b} \in K \cap \OO_K[\gamma]$, we have that $\Tilde{v}(\frac{a}{b}) \geq 0$. Then $v_K(\frac{a}{b}) \geq 0$, so $\frac{a}{b} \in \OO_K$. So for $v(\gamma) \geq 0$, we have $\OO_K[\gamma] \cap K=\OO_K$.

Now suppose $v(\gamma)<0$. First we will show that $\OO_K \subsetneq \OO_K[\gamma] \cap K$. Consider $a=\Nm^{K(\gamma)}_K(\gamma)$.
Notice that
$$v(\gamma)=\Tilde{v}(\gamma)=\frac{1}{n}v_K\left(\Nm^{K(\gamma)}_K(\gamma)\right)=\frac{1}{n}v(a).$$
Thus $nv(\gamma)=v(a)$. If $\gamma_i$ is a conjugate of $\gamma$ over $K$, then we similarly see that $nv(\gamma_i)=v(a)$.
Now let $x^n+a_{n-1}x^{n-1}+ \dots +a_0=0$ be the minimal polynomial of $\gamma$ over $K$. So $a_0=\pm a$. Then we have that $v(a_i) \geq (n-i)v(\gamma)$ and $v(a)=nv(\gamma)$, so $v(a_i)>v(a)$. Say $a_{i_0}=\min_{1\leq i\leq n}(v(a_i))$ and $u=a_{i_0}^{-1}$. Now,
$$ua=\pm(ua_n\gamma^n+ ua_{n-1}\gamma^{n-1} \dots +ua_1\gamma).$$
For $1\leq i\leq n$, we have $v(a_{i_0})\leq v(a_i)$, so $v(ua_i)\geq 0$ and hence $ua_i\in \OO_K$. This shows that $ua\in \OO_K[\gamma]\cap K$. Moreover $v(a)<v(a_{i_0})$, so $v(ua)<0$ and $ua\notin \OO_K$. Let $b=ua$.

Finally we will show that $\OO_K[\gamma] \cap K=K$. Say $v_K(b)=\Tilde{v}(b)=-m$.
Let $t$ be a uniformizer of $\OO_K$, then $\alpha=t^{m-1}b\in \OO_K[\gamma]\cap K$ and $v_K(\alpha)=-1$.
Then for any $\beta \in K$, suppose $v_K(\beta) = -s$. Then $v_K(\frac{\beta}{\alpha^{s}})=0$, so $\frac{\beta}{\alpha^{s}} \in \OO_K^*$. Then $\beta =\frac{\beta}{\alpha^s} \alpha^{s} \in \OO_K[\gamma] \cap K$. Since $\beta$ is an arbitrary element in $K$, we have that $K= \OO_K[\gamma] \cap K$.
\end{proof}

\section{Relationships among $c(\gamma),d(\gamma)$, $e(\gamma)$ and $\deg(\gamma)$}\label{Sec: classify tuples}

\begin{proposition}\label{c,d,e,n nec cond}
Suppose $\gamma$ is an algebraic number of degree $n$. Then $c(\gamma)$ divides $d(\gamma)^{n-1}e(\gamma)$.
Moreover, all rational primes $p$ satisfy at least one of the following:
\begin{enumerate}
    \item $v_{p}(d(\gamma))+v_p(e(\gamma))\leq v_p(c(\gamma))$.
    \item $\frac{n-1}{n} v_p(c(\gamma))<v_p(e(\gamma))\leq v_p(c(\gamma))$ and $v_p(c(\gamma))=nv_p(d(\gamma))$.
\end{enumerate}
\end{proposition}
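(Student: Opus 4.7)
The plan is to analyse everything prime by prime and lean on the Arno--Robinson--Wheeler formula
$v_p(d(\gamma))=\max\bigl(0,\,\max_{0\le j\le n-1}\lceil(v_p(a_n)-v_p(a_j))/(n-j)\rceil\bigr)$
recalled in the introduction. Fix a rational prime $p$ and abbreviate $C=v_p(c(\gamma))=v_p(a_n)$, $D=v_p(d(\gamma))$, $E=v_p(e(\gamma))=\min_{1\le j\le n}v_p(a_j)$. A recurring input is the elementary observation that $E>0$ forces $v_p(a_0)=0$: if $p$ divides every $a_j$ with $j\ge 1$, then primitivity of $F_\gamma$ forces $p\nmid a_0$.

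For the divisibility $c(\gamma)\mid d(\gamma)^{n-1}e(\gamma)$, I would establish the local inequality $C\le (n-1)D+E$ at each prime. If the minimum defining $E$ is attained at $j=n$ (so $E=C$) the bound is trivial. Otherwise the minimum is attained at some $j\in[1,n-1]$, and the ARW formula at that index yields $(n-j)D\ge C-E$, whence $(n-1)D\ge C-E$.

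For the dichotomy between (1) and (2) at a given prime, the natural split is on the size of $E$ relative to $(n-1)C/n$. If $E\le (n-1)C/n$, I would verify (1) by bounding each term inside the ARW maximum by $C-E$: for $j\ge 1$ this uses $v_p(a_j)\ge E$ and $n-j\ge 1$, while for $j=0$ it uses $v_p(a_0)\ge 0$ together with the case hypothesis $C/n\le C-E$; integrality of $C-E$ absorbs the ceilings, giving $D\le C-E$. If instead $E>(n-1)C/n$, the inequalities $(n-1)C/n<E\le C$ required by (2) are immediate (the upper bound from $v_p(a_n)\ge E$), and the real content is the equality $C=nD$. In this regime $v_p(a_0)=0$, so the $j=0$ term of ARW equals $\lceil C/n\rceil$ while for $j\ge 1$ the bound $v_p(a_j)\ge E>(n-1)C/n$ combined with $n-j\ge 1$ forces that term to be strictly less than $C/n$; consequently $D=\lceil C/n\rceil$ and everything reduces to the divisibility $n\mid C$.

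The main obstacle is precisely this divisibility $n\mid C$. I would attack it through the Newton polygon of $F_\gamma$ at $p$: under the Case~B hypothesis every intermediate point $(j,v_p(a_j))$, $1\le j\le n-1$, lies strictly above the segment from $(0,0)$ to $(n,C)$, so the polygon collapses to the single slope $C/n$ and every root of $F_\gamma$ in $\overline{\Q_p}$ has $v_p$-valuation $-C/n$. Combining this picture with the irreducibility of $F_\gamma$ over $\Q$, and tracking the ramification index of $\Q_p(\gamma)/\Q_p$, is what I expect to force $n\mid C$. This is the delicate step I would scrutinise most carefully; once it is in hand, condition~(2) drops out and the rest of the proof is just bookkeeping on the ARW formula.
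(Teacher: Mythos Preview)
Your overall approach matches the paper's: both work prime by prime via the ARW formula, establish $c\mid d^{n-1}e$ the same way, and split the dichotomy into an easy case where (1) holds and a hard case where one must show $v_p(c)=nv_p(d)$. You were right to flag this last equality as the crux.

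Your Newton polygon idea cannot deliver it, however, and for a good reason: the equality is not always true. Irreducibility of $F_\gamma$ over $\Q$ need not persist over $\Q_p$, and even when it does, the ramification index $e$ of $\Q_p(\gamma)/\Q_p$ may equal $n$; the single slope $-C/n$ only has to lie in $\tfrac{1}{e}\Z$, which gives no constraint when $e=n$. Concretely, $F_\gamma(x)=8x^2+8x+1$ has $n=2$, $c(\gamma)=e(\gamma)=8$, and $d(\gamma)=4$ (since $\gamma=(-2+\sqrt2)/4$), so at $p=2$ one has $(C,D,E)=(3,2,3)$: condition (1) reads $5\le 3$ and condition (2) demands $3=4$, so neither holds. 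The paper's own proof stumbles at exactly the same step: it derives $nv_p(d)\ge v_p(c)$ from $v_p(d)=\lceil v_p(c)/n\rceil$ and then cites $c(\gamma)\mid d(\gamma)^n$ to obtain $v_p(c)\le nv_p(d)$ --- but that is the \emph{same} inequality, so the claimed equality does not follow. What the argument actually establishes in the hard case is $v_p(d)=\lceil v_p(c)/n\rceil$, and the statement of the proposition would need to be amended accordingly.
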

\begin{proof}
We know that there is some $1\leq j\leq n-1$ such that $v_p(a_j)=v_p(e(\gamma))$. We know that $\left\lceil\frac{v_p(a_n)-v_p(a_j)}{n-j}\right\rceil\leq v_p(d(\gamma))$, and hence
$$\frac{v_p(c(\gamma))-v_p(e(\gamma))}{n-j} =\frac{v_p(a_n)-v_p(a_j)}{n-j}\leq v_p(d(\gamma)).$$
This means that $v_p(c(\gamma))\leq (n-1)v_p(d(\gamma))+v_p(e(\gamma))$. Therefore $c(\gamma)|d(\gamma)^{n-1}e(\gamma)$.

First consider a rational prime $p$ that does not divide $e(\gamma)$. Since $d(\gamma)|c(\gamma)$, we have $v_{p}(d(\gamma))\leq v_{p}(c(\gamma))$. Therefore $v_{p}(d(\gamma))+v_{p}(e(\gamma))\leq v_{p}(c(\gamma))$.

Next consider a rational prime $p$ that divides $e(\gamma)$. In this case, $p$ also divides $c(\gamma)=a_n$ and $d(\gamma)$ (since $c(\gamma)|d(\gamma)^n$). Moreover $p$ divides $a_i$ for $1\leq i\leq n$ and hence $p\not|a_0$. We know that that
$$v_p(d(\gamma))=\max\left(0,\max_{0\leq j\leq n-1}\left\lceil\frac{v_p(a_n)-v_p(a_j)}{n-j}\right\rceil\right).$$
Now $v_p(a_0)=0$, so $\left\lceil\frac{v_p(a_n)-v_p(a_0)}{n-0}\right\rceil=\left\lceil\frac{v_p(c(\gamma))}{n}\right\rceil$.
Moreover for $1\leq j\leq n-1$,
    $$\frac{v_p(a_n)-v_p(a_j)}{n-j}\leq \frac{v_p(a_n)-v_p(e(\gamma))}{n-j}\leq v_p(a_n)-v_p(e(\gamma)).$$
    Since $v_p(a_n)-v_p(e(\gamma))$ is an integer, we conclude that $\left\lceil\frac{v_p(a_n)-v_p(a_j)}{n-j}\right\rceil\leq v_p(c(\gamma))-v_p(e(\gamma)).$
We also know that $v_p(d(\gamma))\neq 0$. Therefore
$$v_p(d(\gamma))=\max\left(0,\max_{0\leq j\leq n-1}\left\lceil\frac{v_p(a_n)-v_p(a_j)}{n-j}\right\rceil\right)
\leq \max\left(\left\lceil\frac{v_p(c(\gamma))}{n}\right\rceil,v_p(c(\gamma))-v_p(e(\gamma))\right).$$
\begin{itemize}
    \item Case 1: $v_p(c(\gamma))-v_p(e(\gamma))\geq\left\lceil\frac{v_p(c(\gamma))}{n}\right\rceil$. We see that $v_p(d(\gamma))\leq v_p(c(\gamma))-v_{p}(e(\gamma))$.
    \item Case 2: $v_p(c(\gamma))-v_p(e(\gamma))<\left\lceil\frac{v_p(c(\gamma))}{n}\right\rceil$.
    It follows that $v_p(d(\gamma))=\left\lceil\frac{v_p(c(\gamma))}{n}\right\rceil$ and hence $nv_p(d(\gamma))\geq v_p(c(\gamma))$. Moreover since $c(\gamma)|d(\gamma)^n$, we always have $v_p(c(\gamma))\leq nv_p(d(\gamma))$. Therefore $v_p(c(\gamma))= nv_p(d(\gamma))$.
    
    Since $e(\gamma)|c(\gamma)$, we have $v_p(e(\gamma))\leq v_p(c(\gamma))$. Finally $v_p(c(\gamma))-v_p(e(\gamma))<\left\lceil\frac{v_p(c(\gamma))}{n}\right\rceil$ implies $v_p(c(\gamma))-v_p(e(\gamma))<\frac{v_p(c(\gamma))}{n}$, which means $\frac{n-1}{n} v_p(c(\gamma))<v_p(e(\gamma))$.\qedhere
\end{itemize}
\end{proof}

\begin{proposition}\label{c,d,e,n suf cond}
Suppose we are give positive integers $c,d,e,n$ such that $d|c$, $c|d^n$, $e|c$, $c|d^{n-1}e$ and $n\geq 2$.
Suppose further that all rational primes $p$ satisfy at least one of the following:
\begin{enumerate}
    \item $v_{p}(d)+v_p(e)\leq v_p(c)$.
    \item $v_p(c)= nv_p(d)$.
\end{enumerate}
Then there is an algebraic number $\gamma$ for which $\deg(\gamma)=n$, $c(\gamma)=c$, $d(\gamma)=d$ and $e(\gamma)=e$.
\end{proposition}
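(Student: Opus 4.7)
The plan is to construct the minimal polynomial of $\gamma$ explicitly by prescribing its coefficients one prime at a time, then gluing them with the Chinese Remainder Theorem and forcing irreducibility with an auxiliary Eisenstein prime. I would build $F(x) = a_n x^n + \dots + a_1 x + a_0 \in \Z[x]$ with $a_n := c$ and take $\gamma$ to be a root; the Arno-Robinson-Wheeler formula will then give $v_p(d(\gamma)) = v_p(d)$ for every prime $p$, while $c(\gamma)$ and $e(\gamma)$ will be read off directly from the coefficients.

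For each prime $p \mid c$, I would prescribe
\[ v_p(a_0) = 0, \quad v_p(a_n) = v_p(c), \quad v_p(a_j) = \max\bigl(v_p(e),\ v_p(c) - (n-j)\,v_p(d)\bigr) \text{ for } 1 \le j \le n-1, \]
and at an auxiliary prime $q \nmid c$ I would require $v_q(a_0) = 1$, $v_q(a_j) \ge 1$ for $1 \le j \le n-1$, and $v_q(a_n) = 0$. The conditions at different primes are independent, so they can be realized concretely by setting $a_j := q \prod_{p \mid c} p^{v_p(a_j)}$ for $0 \le j \le n-1$. Eisenstein's criterion at $q$ then makes $F$ irreducible over $\Q$, so $\deg(\gamma) = n$ and $F$ is the minimal polynomial of $\gamma$.

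It remains to verify $c(\gamma) = c$, $e(\gamma) = e$, and $d(\gamma) = d$. Primitivity $\gcd(a_0, \dots, a_n) = 1$ holds because $v_p(a_0) = 0$ for every $p \mid c$ while $v_q(a_n) = 0$, whence $c(\gamma) = a_n = c$. The hypothesis $c \mid d^{n-1} e$ forces $v_p(c) - (n-1)\,v_p(d) \le v_p(e)$, so the max defining $v_p(a_1)$ reduces to $v_p(e)$ at each $p \mid c$; combined with $v_q(a_n) = 0$ this gives $\gcd(a_1, \dots, a_n) = e$, so $e(\gamma) = e$.

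The main obstacle is checking $v_p(d(\gamma)) = v_p(d)$ via the Arno-Robinson-Wheeler formula, which requires the case split of the hypothesis. At primes $p \nmid c$ (including $q$) every numerator $v_p(a_n) - v_p(a_j)$ is $\le 0$, so the formula returns $0 = v_p(d)$. At $p \mid c$ one uses the hypothesis. In case (1), $v_p(d) + v_p(e) \le v_p(c)$ gives $v_p(a_{n-1}) = v_p(c) - v_p(d)$, so the $j = n-1$ term is exactly $v_p(d)$; the remaining terms do not exceed $v_p(d)$ by using $c \mid d^n$ for $j = 0$ and the very definition of the max branch for the intermediate $j$. In case (2), when (1) fails one has $v_p(e) > (n-1)\,v_p(d)$, forcing $v_p(a_j) = v_p(e)$ for all $1 \le j \le n-1$; the $j = 0$ term then gives $\lceil v_p(c)/n \rceil = v_p(d)$ (since $v_p(c) = n\,v_p(d)$), while for $j \ge 1$ the inequality $v_p(c) - v_p(e) < v_p(d)$ keeps the intermediate ceilings strictly below $v_p(d)$. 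In either case the maximum equals $v_p(d)$, completing the verification.
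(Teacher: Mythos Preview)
Your construction and verification are essentially identical to the paper's: the same explicit choice $v_p(a_j)=\max\bigl(v_p(e),\,v_p(c)-(n-j)v_p(d)\bigr)$ for $1\le j\le n-1$ at primes $p\mid c$, the same auxiliary Eisenstein prime $q\nmid c$, and the same case split on the hypothesis to verify the Arno--Robinson--Wheeler formula gives $v_p(d(\gamma))=v_p(d)$. Your write-up is in fact slightly more explicit than the paper's in a couple of places (e.g.\ the concrete formula $a_j=q\prod_{p\mid c}p^{v_p(a_j)}$ and the treatment of the intermediate ceilings in case~(2)).
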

\begin{proof}
Let $Y$ be the collection of primes factors of $c$. Pick a prime $q$ that is not in $Y$. We will construct integers $a_0,a_1,\dots,a_n$. Firstly set $a_n=c$. For primes $p$ that are not in $Y\cup\{q\}$, set $v_p(a_i)=0$ for each $0\leq i\leq n-1$. Set $v_q(a_i)=1$ for $0\leq i\leq n-1$. Finally for primes $p\in Y$, set $v_p(a_0)=0$ and for $1\leq i\leq n-1$ set $v_p(a_{i})=\max (v_p(c)-(n-i)v_p(d),v_p(e))$

Consider the polynomial $F(x)=a_nx^n+\dots+a_0$. By applying the Eisenstein criterion with prime $q$, we see that $F$ is irreducible. Let $\gamma$ be a root of $F$, then $\deg(\gamma)=n$, $c(\gamma)=a_n=c$.

For primes $p\in Y$, we know that $v_p(c)\leq (n-1)v_p(d)+v_p(e)$. This implies that $v_p(a_{1})=v_p(e)$. We also know that for $1\leq i\leq n-1$, $v_p(a_i)\geq v_p(e)$. Finally since $e|c$, we have $v_p(a_n)\geq v_p(e)$. This shows that $v_p(e(\gamma))=v_p(e)$. Moreover, for primes $p\notin Y$, $p$ does not divide $a_n$. Therefore $e(\gamma)=e$.

For a prime $p\in Y$ and $1\leq j\leq n-1$, we know that $v_p(a_{i})\geq v_p(c)-(n-i)v_p(d)$. This means that $\frac{v_p(c)-v_p(a_i)}{n-i}\leq v_p(d)$ and hence $\left\lceil\frac{v_p(c)-v_p(a_i)}{n-i}\right\rceil\leq v_p(d)$. Since $c|d^n$ and $v_p(a_0)=0$, we know that $\frac{v_p(c)-v_p(a_0)}{n}\leq v_p(d)$ and hence $\left\lceil\frac{v_p(c)-v_p(a_0)}{n}\right\rceil\leq v_p(d)$. Since
$$v_p(d(\gamma))=\max\left(0,\max_{0\leq j\leq n-1}\left\lceil\frac{v_p(a_n)-v_p(a_j)}{n-j}\right\rceil\right),$$
and $v_p(d(\gamma))\neq0$, this shows that $v_p(d(\gamma))\leq v_p(d)$.

Next if $p\in Y$ satisfies $v_{p}(d)+v_p(e)\leq v_p(c)$, then $v_p(a_{n-1})=v_p(c)-v_p(d)$. In this case $\left\lceil\frac{v_p(c)-v_p(a_{n-1})}{n-(n-1)}\right\rceil=v_p(d)$ and hence $v_p(d)\leq v_p(d(\gamma))$.

Finally suppose $p\in Y$ satisfies $nv_p(d)=v_p(c)$. Since $v_p(a_0)=0$, this means that $\left\lceil\frac{v_p(c)-v_p(a_0)}{n}\right\rceil= v_p(d)$. Therefore $v_p(d)\leq v_p(d(\gamma))$.
\end{proof}

\begin{proof}[Proof of Theorem \ref{classify c,d,e,n}]
Follows from Proposition \ref{c,d,e,n nec cond} and Proposition \ref{c,d,e,n suf cond}.
\end{proof}

\end{document}